\title[Deformed preprojective algebras]{Deformed preprojective
algebras of
type $L$: K\"ulshammer spaces and derived equivalences}
\author{Thorsten Holm \and Alexander Zimmermann}
\address{~~\newline
Thorsten Holm\newline Institut f\"ur Algebra, Zahlentheorie
und Diskrete Mathematik,
Leibniz Universit\"at Hannover,
Welfengarten 1,
30167 Hannover, Germany}
\email{holm@math.uni-hannover.de}
\urladdr{http://www.iazd.uni-hannover.de/\~{ }tholm}
\address{
~~\newline Alexander Zimmermann\newline
Universit\'{e} de
Picardie et LAMFA (UMR 6140 du CNRS),
33 rue St Leu, 80039 Amiens CEDEX 1, France}
\email{alexander.zimmermann@u-picardie.fr}
\urladdr{http://www.mathinfo.u-picardie.fr/alex/azimengl.html}
\thanks{T.H. is supported by the research grant HO 1880/4-1
of the Deutsche Forschungsgemeinschaft (DFG), in the framework of
the Research Priority Program SPP 1388 Representation Theory.}
\thanks{Mathematics Subject Classification (2000):
Primary: 16G60
Secondary: 16E05, 18E30\\
Keywords: deformed preprojective algebras, periodic algebras,
symmetric algebras,
derived equivalences, stable equivalences of Morita type.}
\date{preliminary version of \today}
\newtheorem{Lemma}{Lemma}[section]
\newtheorem{Theorem}[Lemma]{Theorem}
\newtheorem{Definition}[Lemma]{Definition}
\newtheorem{Proposition}[Lemma]{Proposition}
\newtheorem{Remark}[Lemma]{Remark}
\newtheorem{Example}[Lemma]{Example}
\newcommand{\lra}{\longrightarrow}
\newcommand{\ra}{\rightarrow}
\newcommand{\sdp}{\times\kern-.2em\vrule height1.1ex depth-.05ex}
\newcommand{\epi}{\lra \kern-.8em\ra}
\newcommand{\N}{{\mathbb N}}
\newcommand{\F}{{\mathbb F}}
\newcommand{\ol}{\overline}
\newcommand{\Z}{{\mathbb Z}}
\newcommand{\dickebox}{{\vrule height5pt width5pt depth0pt}}
\def\Hom{\operatorname{Hom}}
\def\End{\operatorname{End}}
\def\Out{\operatorname{Out}}
\def\Aut{\operatorname{Aut}}
\def\Inn{\operatorname{Inn}}
\def\soc{\operatorname{soc}}
\def\rad{\operatorname{rad}}
\begin{document}

\begin{abstract}
Bia\l kowski, Erdmann and Skowro\'nski classified
those indecomposable self-injective
algebras for which the Nakayama shift of every (non-projective)
simple module is isomorphic to its third syzygy.
It turned out that these are precisely the
deformations, in a suitable sense, of preprojective algebras
associated to the simply laced $ADE$ Dynkin diagrams and
of another graph $L_n$, which also occurs in the
Happel-Preiser-Ringel classification
of subadditive but not additive functions.
In this paper we study these deformed preprojective
algebras of type $L_n$ via their K\"ulshammer spaces,
for which we give precise formulae for their dimensions.
These are known to be invariants of the derived module category,
and even invariants under stable
equivalences of Morita type.
%Along the way we give an independent proof of the result
%announced by Bia\l kowski, Erdmann and Skowro\'{n}ski
%that all the deformed preprojective algebras of type $L$
%are symmetric algebras.
As main application of our study of K\"ulshammer spaces
we can distinguish many (but not all) deformations of the preprojective
algebra of type $L_n$ up to stable equivalence of Morita type,
and hence also up to derived equivalence.
\end{abstract}

\maketitle

\section{Introduction}

Preprojective algebras have been introduced by Gelfand and Ponomarev
\cite{GP} and nowadays occur prominently in various areas in mathematics.
For a quiver (i.e. a finite directed graph) $Q$ its preprojective
algebra is defined by the following process: to any arrow $a$ in $Q$
which is not a loop introduce a new arrow $\overline{a}$ in the opposite
direction; for a loop $a$ set $\overline{a}:=a$, leading
to a new quiver $\overline{Q}$. Then the preprojective algebra $P(Q)$
of type $Q$
over a field $K$ is defined by the quiver with relations $K\overline{Q}/I$
where the ideal is generated by the relations, one for each
vertex $v$ in $Q$, of the form
$\sum_{s(a)=v} a\overline{a}$,
where $s(a)$ denotes the starting vertex of the arrow $a$.
Note that the preprojective algebra is independent of the orientation of
the quiver $Q$. The preprojective algebra for a
quiver associated to a tree is known to be finite-dimensional if
and only if the quiver $Q$ is a disjoint union of some orientations of simply laced
$ADE$ Dynkin diagrams.
The finitely generated modules of the
preprojective algebras for $ADE$ Dynkin quivers have remarkable
homological properties. Namely, by a result of Schofield \cite{Schofield}
each non-projective indecomposable
module has $\Omega$-period at most 6, where $\Omega$ denotes
Heller's syzygy operator; for proofs
see \cite{AR}, \cite{Buchweitz}, \cite{ES}.

In an attempt to characterise those selfinjective finite-dimensional
algebras which share these remarkable periodicity properties,
Bia\l kowski, Erdmann and Skowro\'{n}ski introduced in \cite{BES}
deformations of the preprojective algebras of $ADE$ Dynkin quivers
and of an additional graph of type $L_n$ of the
following form
\unitlength 1cm
\begin{center}
\begin{picture}(7,2)
\put(1,1){\circle{1}}
%\put(1.5,1){\vector(0,1){.1}}
\put(1.6,.9){$\bullet$}
\put(1.8,1){\line(1,0){1}}
\put(2.95,.9){$\bullet$}
\put(3.2,1){\line(1,0){1}}
\put(4.4,1){$\dots$}
\put(5.05,1){\line(1,0){1}}
\put(6.2,.9){$\bullet$}
\end{picture}
\end{center}
which already occurred in the Happel-Preiser-Ringel classification
\cite{HPR} of subadditive but not additive functions.

The deformations $P^f(Q)$ are obtained by perturbing the usual preprojective
relation $\sum_{s(a)=v} a\overline{a}$ at one particular vertex
by adding a certain polynomial expression $f$.
It turns out that proper deformations occur only for the
diagrams of types $D$, $E$, and $L$. For more details on the actual
relations we refer to \cite[Section 3]{BES}.
The Bia\l kowski-Erdmann-Skowro\'{n}ski deformations are different and
should not be confused with the deformed preprojective algebras
of Crawley-Boevey and Holland \cite{CBH}.

The main result of Bia\l kowski, Erdmann and Skowro\'{n}ski gives
the following surprising classification of selfinjective algebras
sharing the periodicity properties of preprojective algebras of Dynkin
type.
\medskip

\noindent
{\bf Theorem.} (\cite[Theorem 1.2]{BES})
{\em Let $\Lambda$ be a basic, connected, finite-dimensional, selfinjective
algebra over an algebraically closed field. Then the following statements
are equivalent:

(i) $\Lambda$ is isomorphic to a deformed preprojective algebra $P^f(Q)$
for a quiver of type $ADE$ or $L$.

(ii) $\Omega^3(S)\cong \nu^{-1}S$ for every non-projective simple
right $\Lambda$-module $S$, where $\nu$ is the Nakayama transformation.
}
\bigskip

In our present paper we shall
study the deformed preprojective algebras of type $L_n$ in
the Bia\l kowski-Erdmann-Skowro\'nski sense.
Let us start by giving a precise
definition of these algebras.

Let $K$ be a field, let $p(X)\in K[X]$ be a polynomial and let $n\in\N$.
Then let $L_n^p$ be the $K$-algebra given by the
following quiver with $n$ vertices
\unitlength1cm
\begin{center}
\begin{picture}(10,2)
\put(1,1){$\bullet$}
\put(1,0.5){\scriptsize $0$}
\put(2,1){$\bullet$}
\put(2,0.5){\scriptsize $1$}
\put(3,1){$\bullet$}
\put(3,0.5){\scriptsize $2$}
\put(4,1){$\bullet$}
\put(4,0.5){\scriptsize $3$}
\put(8,1){$\bullet$}
\put(7.8,0.4){\scriptsize $n-2$}
\put(9,1){$\bullet$}
\put(9,0.4){\scriptsize $n-1$}
\put(5,1){$\cdots$}
\put(6,1){$\cdots$}
\put(7,1){$\cdots$}
\put(1.1,1.3){\vector(1,0){.8}}
\put(2.1,1.3){\vector(1,0){.8}}
\put(3.1,1.3){\vector(1,0){.8}}
\put(8.1,1.3){\vector(1,0){.8}}
\put(8.9,.9){\vector(-1,0){.8}}
\put(3.9,.9){\vector(-1,0){.8}}
\put(2.9,.9){\vector(-1,0){.8}}
\put(1.9,.9){\vector(-1,0){.8}}
\put(0.4,1.1){\circle{1}}
\put(.88,1.1){\vector(0,-1){.1}}
\put(0,1.6){\small $\epsilon$}
\put(1.3,1.4){\small $a_0$}
\put(2.3,1.4){\small $a_1$}
\put(3.3,1.4){\small $a_2$}
\put(8.3,1.4){\small $a_{n-2}$}
\put(1.3,.65){\small $\ol a_0$}
\put(2.3,.65){\small $\ol a_1$}
\put(3.3,.65){\small $\ol a_2$}
\put(8.3,.65){\small $\ol a_{n-2}$}
\end{picture}
\end{center}
subject to the following relations
$$a_i\overline a_i+\overline a_{i-1}a_{i-1}=0\mbox{ for all }
i\in\{1,\dots ,n-2\}~,
$$
$$\overline a_{n-2}a_{n-2} =0~,~~~~~
\epsilon^{2n}=0~,~~~~~
\epsilon^2+a_0\overline a_0+\epsilon^3p(\epsilon)=0.
$$
These algebras are the
{\em deformed preprojective algebras of
type $L_n$}, in the sense of Bia\l kowski,
Erdmann and Skowro\'{n}ski \cite{BES}. Note that the usual
preprojective relations are deformed only at the vertex 0.

More details on these algebras are
collected in Section
\ref{typeLalgebras} below.
In particular we determine their Cartan
matrices and we provide an explicit $K$-basis of the algebra
given by a set of paths in
the quiver. Moreover, we determine explicitly
the centre and the commutator subspace of the deformed preprojective
algebras of type $L_n$.

An important structural property is that all deformed preprojective
algebras of type $L_n$ are symmetric algebras. This is a yet unpublished
result of Bia\l kowski, Erdmann and Skowro\'{n}ski \cite{BESIII};
since we build on it in the present paper we give
an independent proof of this fact for the sake of completeness
(cf. Section \ref{Sec:symmetric}).

It is a subtle question for which deformation polynomials $p$ the deformed
preprojective algebras $L_n^p$ become isomorphic.
We have been informed by Skowro\'{n}ski (cf. also the talk of Bia\l kowski in 
Tokyo at the ICRA XIV) that
over a field $K$ of characteristic different from $2$ all deformed
preprojective algebras of type $L_n$ are isomorphic. However, in
characteristic $2$ the situation is more complex; Bia\l kowski, Erdmann
and Skowro\'{n}ski have given a series of pairwise non-isomorphic
deformed preprojective algebras of type $L_n$ (over an algebraically
closed field), see \cite[Proposition 6.1]{BES}.
Moreover, they have even announced \cite{Bialkowski-ICRA-Abstract} a complete classification
of the deformed preprojective algebras of type $L_n$ up to Morita equivalence
(over an algebraically closed field of characteristic $2$); namely,
the algebras corresponding to the set of deformation
polynomials $p(X)=X^{2j}$ for $j\in\{0,1,2,\dots,n-1\}$ give
a complete classification up to Morita equivalence.

We are not building on this classification in the present paper but we
use it as a motivation for restricting our computations of K\"ulshammer
spaces in Section \ref{mainresultsection} to the case of deformation polynomials
$X^{2j}$.

The Bia\l kowski-Erdmann-Skowro\'{n}ski characterisation of the selfinjective
algebras where for each non-projective simple module
the third syzygy is isomorphic to the
(inverse) Nakayama transformation can be seen as a
condition on the stable module category; we therefore believe it is
natural to aim at a classification of the deformed preprojective algebras
up to stable equivalence or up to derived equivalence,
rather than up to Morita equivalence.

Our main results in this paper provide partial answers to these problems.
We are able to distinguish several of the deformed preprojective algebras
$L_n^{X^{2j}}$ up to stable equivalence of Morita type and up to derived
equivalence.
Our main applications in this direction are summarised in the following
result.

\begin{Theorem}
%\label{thm-derived}
Let $K$ be a perfect field of characteristic 2.
\begin{enumerate}
\item[{(a)}]
If two deformed preprojective
algebras $L_n^{p}$ and $L_m^{q}$ are stably equivalent of Morita type
or derived equivalent,
then $n=m$.
\item[{(b)}]
For $n\in\mathbb{N}$ let $j,k\in\{0,1\ldots,n-1\}$ be different numbers
such that
$\{j,k\}\neq \{n-2r,n-2r-1\}$ for every $1\le r\le \lceil \frac{n-2}{2}\rceil$.
Then the deformed preprojective algebras
$L_n^{X^{2j}}$ and $L_n^{X^{2k}}$
are not stably equivalent of Morita type, and also not
derived equivalent.
\end{enumerate}
\end{Theorem}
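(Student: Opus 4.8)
The plan is to derive both parts of the theorem from the computation of the dimensions of the Külshammer spaces $T_m^\perp/([\Lambda,\Lambda]+T_m)$ (or whatever precise variant is used in the paper), which by the cited results are invariants under stable equivalences of Morita type, and hence under derived equivalences of symmetric algebras. Since all the algebras $L_n^p$ are symmetric (by the Bia\l kowski--Erdmann--Skowro\'nski result recalled above, for which the paper gives an independent proof), this reduction machinery applies. So the whole theorem becomes a purely combinatorial matter of comparing sequences of dimensions attached to $L_n^{X^{2j}}$ for varying $n$, $j$.

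For part (a), I would observe that the ordinary (underived, unstable) Morita invariants already distinguish the algebras $L_n^p$ for different $n$: the number of simple modules is $n$, and one can also read off $n$ from the dimension of the algebra or from its Cartan matrix, all of which are recalled in Section~\ref{typeLalgebras}. Since the number of simple modules and the center (more precisely, invariants derived from the Cartan matrix over the appropriate field) are preserved under both stable equivalences of Morita type and derived equivalences, $n=m$ follows immediately. This part is essentially formal and should be short; the only care needed is to cite the right invariance statements.

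For part (b), the real content, the plan is: first, record the explicit formula for $\dim_K$ of the $m$-th Külshammer space of $L_n^{X^{2j}}$ as a function of $n$, $j$, and $m$ — this is exactly the ``precise formulae'' promised in the abstract and proved in Section~\ref{mainresultsection}. Then I would extract, for each fixed $n$, the finite tuple $(\dim T_m^\perp/(\ldots))_{m\ge 1}$ as a function of $j$, and analyze precisely when two different values $j\ne k$ in $\{0,\ldots,n-1\}$ produce the same tuple. The expected shape of the answer is that the Külshammer dimensions depend on $j$ only through some coarser quantity — my guess is something like $\min(2j,\ \text{something involving }n)$ or a ``$2j$ versus $2n-2j-2$'' symmetry coming from the relation $\epsilon^{2n}=0$ together with the deformation term $\epsilon^{2j+2}$ — so that the fibers of the map $j\mapsto(\text{dimension tuple})$ are exactly the pairs $\{n-2r, n-2r-1\}$ excluded in the statement, together with the singletons. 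Concretely, I would show: (i) if $\{j,k\}$ is not one of the exceptional pairs then the tuples differ in at least one coordinate $m$, hence the algebras are not stably equivalent of Morita type; (ii) since both algebras are symmetric, a derived equivalence would induce a stable equivalence of Morita type (Rickard), so non-equivalence stably of Morita type also rules out derived equivalence.

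The main obstacle will be step (i) of part (b): pinning down exactly which $m$-th Külshammer space separates a given non-exceptional pair $\{j,k\}$, and verifying that the exceptional pairs genuinely cannot be separated by \emph{any} $m$ (so that the theorem's hypothesis is sharp for this method, even if perhaps not sharp for the classification problem itself). This requires a careful case analysis of the closed-form dimension formula — in particular handling the boundary cases $j$ near $0$ and $j$ near $n-1$, and the parity of $n$, since $\lceil\frac{n-2}{2}\rceil$ appears — and making sure the formula behaves monotonically or piecewise-linearly in $j$ in the way one needs to distinguish all the remaining pairs. A secondary subtlety is that Frobenius/Jennings-type arguments for Külshammer spaces typically need the field to be perfect (used when taking $p$-th roots), which is why the hypothesis ``$K$ perfect of characteristic $2$'' appears; I would make sure every place the perfectness is invoked is flagged.
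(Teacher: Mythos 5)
Your overall strategy coincides with the paper's: part (a) via classical invariants, part (b) via the K\"ulshammer dimension formula combined with the Liu--Zhou--Zimmermann invariance under stable equivalence of Morita type and the Rickard/Keller--Vossieck passage from derived to stable equivalence for self-injective (here symmetric) algebras. Two points in your plan need correction, however. First, in part (a) you cannot invoke the number of simple modules, nor the dimension of the algebra, as invariants of stable equivalence of Morita type: the former is precisely the (open) Auslander--Reiten conjecture in that setting, and the latter is not an invariant even under derived equivalence. The paper uses the number of simples only for the derived-equivalence case and, for stable equivalence of Morita type, appeals to Xi's theorem that the absolute value of the Cartan determinant is an invariant; since $\det C_n = 2^n$, this recovers $n$. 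Your mention of ``invariants derived from the Cartan matrix'' points in the right direction, but you must pin it down to the determinant and cite Xi explicitly, as the Cartan matrix itself is only preserved up to unimodular transformation under derived equivalence.

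Second, for part (b) the paper needs only the first K\"ulshammer space: with $i=1$ the invariant is $n-\lceil\frac{n-j-1}{2}\rceil$ (after simplifying $\lceil\frac{2n-2j-3}{4}\rceil$ using that the numerator is odd), and its fibres on $\{0,\dots,n-1\}$ are exactly the singletons together with the pairs $\{n-2r,n-2r-1\}$. Your plan to use the whole tuple over all $m$ is fine and would in fact prove more, but your proposed sharpness check --- that the exceptional pairs ``cannot be separated by any $m$'' --- is false: the paper's $n=5$ example shows that $T_2$ separates $L_5^{X^0}$ from $L_5^{X^2}$ even though $\{0,1\}$ is an exceptional pair for $T_1$. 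This does not affect the theorem as stated, but you should drop that claim; the hypothesis in (b) is tailored to the first K\"ulshammer space only.
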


These results are obtained as a consequence of a detailed study
of the K\"ulshammer spaces for the deformed preprojective algebras
of type $L$. These spaces have been defined by K\"ulshammer in the
1980's for symmetric algebras over a field of positive characteristic;
we recall briefly the construction and some fundamental properties
from \cite{Ku1}.
%and our generalisation from \cite{BHZ} to self-injective algebras.
For an algebra $A$ over a field $K$ let
$[A,A]$ be the $K$-vector space generated by
$\{ab-ba\in A\;|\;a,b\in A\}$ and call this space the commutator subspace of $A$.
K\"ulshammer defined  for a $K$-algebra $A$ over a perfect field $K$
of characteristic $p>0$ the $K$-vector spaces
$T_i(A):=\{x\in A\;|\;x^{p^i}\in [A,A]\}$ for every integer $i\ge 0$.
They form an ascending series
$$[A,A]=T_0(A) \subseteq T_1(A) \subseteq T_2(A) \subseteq \ldots \subseteq
T_i(A) \subseteq T_{i+1}(A) \subseteq \ldots.$$
In \cite{Z2} it was shown by the second author that for symmetric algebras
over a perfect field
the codimension of the commutator space of
$A$ in $T_i(A)$ is invariant under derived equivalences, and in \cite{LZZ} Liu, Zhou
and the second author showed that this codimension is an invariant under stable
equivalences of Morita type. In joint work with Bessenrodt \cite{BHZ} we showed that
the codimension of $T_i(A)$ in $A$ is an invariant of the derived category
of $A$ for general (not necessarily symmetric) finite dimensional algebras.

The derived invariance of the various codimensions of
K\"ulshammer spaces proved already to be
very useful to distinguish derived equivalence classes
of symmetric algebras, see \cite{HS}, \cite{HS1}, \cite{hz-tame},
and also to distinguish
stable equivalence classes of Morita type, see \cite{ZZ1}, \cite{ZZ2}.

For obtaining our above results on deformed preprojective algebras of type $L_n$
(over a perfect field of characteristic $2$) we determine the dimension of
their K\"ulshammer spaces $T_i(L_n^{X^{2j}})$;
our main result in this direction is the following.

\begin{Theorem} \label{mainresultintro} Let $K$ be a perfect field of
characteristic $2$. Then for every $0\leq j< n$ we have
\begin{enumerate}
\item[{(a)}]
$\dim_K T_i(L_n^{X^{2j}})-\dim_K [L_n^{X^{2j}},L_n^{X^{2j}}]
= n-\max\left(\left\lceil\frac{2n-(2^{i+1}-2)j-(2^{i+1}-1)}{2^{i+1}}
\right\rceil,0\right)$
\item[{(b)}]
$\dim_K [L_n^{X^{2j}},L_n^{X^{2j}}] = \frac{1}{3}n(n-1)(2n+5).$
%\dim\left(Z(L_n^{X^{2j}})\right)&=&2n
\end{enumerate}
\end{Theorem}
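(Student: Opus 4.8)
The plan is to work with the explicit $K$-basis of $L_n^{p}$ given by paths in the quiver (as promised in Section~\ref{typeLalgebras}), since both parts of the theorem are ultimately bookkeeping statements about how the Frobenius map $x\mapsto x^{p^i}=x^{2^{i+1}}$ interacts with the commutator subspace. First I would recall that $[A,A]$ has, as a complement in $A$, a space spanned by (classes of) cyclic paths together with the primitive idempotents, while $[A,A]$ itself is spanned by the remaining basis paths plus differences of cyclically equivalent cyclic paths; from this one reads off that $\dim_K A/[A,A]$ equals the number of ``cyclic words'' one can form, which here means essentially the powers of $\epsilon$ (the only loop) up to the nilpotency bound $\epsilon^{2n}=0$, together with one trace-type class in each of the segments. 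Computing $\dim_K[A,A]$ in part~(b) is then $\dim_K A - \dim_K A/[A,A]$, and $\dim_K A$ is the sum of the Cartan matrix entries determined in Section~\ref{typeLalgebras}; the closed form $\tfrac13 n(n-1)(2n+5)$ should drop out of summing those entries, so I expect (b) to be the easier, purely arithmetic part.

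For part~(a) the main work is to determine, for each $i$, exactly which basis elements lie in $T_i(A)=\{x : x^{2^{i+1}}\in[A,A]\}$ modulo $[A,A]$. Over a perfect field of characteristic $2$ the key algebraic fact (going back to Külshammer, and used in the cited work of the second author) is that $T_i(A)/[A,A]$ can be analysed on the complement spanned by the cyclic basis elements: a cyclic path $w$ contributes to $T_i(A)$ precisely when $w^{2^{i+1}}$ is again in the span of cyclic paths that vanish, i.e.\ when the ``length'' of the cycle, multiplied by $2^{i+1}$, exceeds the relevant nilpotency degree. Concretely, the nontrivial cyclic classes come from powers of $\epsilon$ and from the ``long cycles'' $\epsilon^{2j+1}$-type expressions forced by the deformed relation $\epsilon^2 + a_0\overline a_0 + \epsilon^3 p(\epsilon)=0$ with $p(X)=X^{2j}$; substituting this relation repeatedly rewrites $a_0\overline a_0$ (and hence each segment-cycle) as a polynomial in $\epsilon$ of a controlled lowest degree, and that degree is what produces the term $(2^{i+1}-2)j$ in the formula. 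The heart of the argument is therefore: (1) identify the minimal $\epsilon$-degree occurring in each cyclic basis class after using the relations, (2) raise to the $2^{i+1}$-th power, using that in characteristic $2$ the Frobenius is additive so $\big(\sum c_k\epsilon^{k}\big)^{2^{i+1}} = \sum c_k^{2^{i+1}}\epsilon^{2^{i+1}k}$ and the lowest surviving degree governs membership, and (3) compare against $\epsilon^{2n}=0$ to decide vanishing.

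Counting how many of the $n$ cyclic classes survive this test gives $n$ minus the number that are killed, and a class indexed so that its minimal $\epsilon$-degree is roughly $2j\ell + (\text{small correction})$ for $\ell=1,\dots$ is killed once $2^{i+1}\cdot(\text{that degree}) \ge 2n$; solving this inequality for the number of killed classes yields the ceiling expression $\big\lceil \frac{2n-(2^{i+1}-2)j-(2^{i+1}-1)}{2^{i+1}}\big\rceil$, truncated below at $0$ since a negative count of killed classes is impossible. I expect the main obstacle to be step~(1): controlling precisely the lowest $\epsilon$-degree of each segment-cycle after iterating the deformed relation, and in particular checking that no unexpected cancellation in characteristic $2$ lowers or raises that degree — this is where the exponent $X^{2j}$ (an even power, hence a square in characteristic $2$) is used essentially, and where the case analysis for which $\{j,k\}$ are excluded in the companion theorem originates. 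Once the degree bookkeeping is pinned down, both (a) and (b) follow by elementary summation.
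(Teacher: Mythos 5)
There is a genuine gap, and it sits exactly at the heart of part (a). You reduce correctly to the cyclic classes concentrated at vertex $0$ (the paper's Lemma~\ref{basisofcommutatorquotient} shows the commutator quotient is generated by the $e_i$ together with the odd powers $\epsilon^{2k+1}$), but your membership criterion for $T_i$ is wrong: you test whether the power of a cyclic class \emph{vanishes} (``is in the span of cyclic paths that vanish, i.e.\ when the length times $2^{i+1}$ exceeds the nilpotency degree''). The correct test is whether $\epsilon^{2^i(2k+1)}$ lies in $[A,A]\cap e_0Ae_0$, which by Proposition~\ref{explicitdescrofK} is the span of the \emph{non-zero} elements $(a_0\ol a_0)^m=(\epsilon^2+\epsilon^{2j+3})^m$ for $1\le m\le n-1$. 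A vanishing criterion would yield a bound independent of $j$ (roughly $2^i(2k+1)\ge 2n$), which contradicts the asserted formula. The actual content of the proof is an obstruction analysis for the equation $\epsilon^{2^i(2k+1)}=\sum_m c_m(\epsilon^2+\epsilon^{2j+3})^m$: one chooses $c_{2^{i-1}(2k+1)}\ne 0$, which creates an error term $\epsilon^{2^{i-1}(4k+2j+3)}$ (its exponent controlled by Lucas' theorem on binomial coefficients mod $2$), corrects it by a further coefficient, and iterates; after exactly $i$ correction steps the residual error has degree $2^{i+1}k+(2^{i+1}-2)j+(2^{i+1}-1)$ and must be $\ge 2n$ to die. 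None of this machinery --- nor the preliminary reduction to $K=\F_2$ needed to replace $b_k^{2^i}$ by $b_k$ over a general perfect field --- appears in your sketch. A related slip: $T_i(A)=\{x\,|\,x^{2^i}\in[A,A]\}$, not $x^{2^{i+1}}$; the $2^{i+1}$ in the formula is produced by the iteration, not by the definition, and carrying $2^{i+1}$ through would shift the answer by one in $i$.

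For part (b) your outline is closer to the paper but still incomplete: listing cyclic classes only gives the upper bound $\dim_K A/[A,A]\le 2n$ (and your count ``all powers of $\epsilon$ plus one class per segment'' overcounts --- only the $n$ odd powers survive, since the even part of $e_0Ae_0$ modulo $[A,A]$ is killed by the elements $(\epsilon^2+\epsilon^{2j+3})^m$). The matching lower bound requires showing these $2n$ classes are linearly independent modulo $[A,A]$; the paper does this dually, using that $L_n^p$ is symmetric so that $\dim_K[A,A]=\dim_K A-\dim_K Z(A)$, and then exhibiting $2n$ independent central elements (the powers of $\epsilon^2+\epsilon^3p(\epsilon)+\sum_\ell(-1)^{\ell+1}\ol a_\ell a_\ell$ together with the socle). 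Without some such independence argument your computation of $\dim_K[A,A]$ is only an inequality.
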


\medskip

The paper is organised as follows. In Section~\ref{Nakayamatwistedsection}
we recall some results for selfinjective algebras and we
propose a method to compute the centre and the quotient of the algebra modulo
the commutator space for selfinjective algebras which we believe should be useful
in other situations as well.
In Section~\ref{typeLalgebras} we study the deformed preprojective
algebras $L_n^p$, give a $K$-basis, the Cartan matrix, the commutator space,
and the centre of the algebras. In Section~\ref{mainresultsection}
we compute the K\"ulshammer spaces $T_n(L_n^{X^{2j}})$ and deduce the main results.

%%%%%%%%%%%%%%%%%%%%%%%%%%%%%%%%%%%%%%%%%%%%%%%%%%%%%%%%%%%%%%%%%%%
%%%%%%%%%%%%%%%%%%%%%%%%%%%%%%%%%%%%%%%%%%%%%%%%%%%%%%%%%%%%%%%%%%%
\section{Hochschild homology and Nakayama automorphisms
of self-injective algebras}

\label{Nakayamatwistedsection}

In this section we present some general methods to deal with
selfinjective algebras, in particular for getting a Nakayama
automorphism and related bilinear forms explicitly.
Strictly speaking the results of this section are not used
in this generality in the rest of this paper since
the deformed preprojective algebras of type $L$ are symmetric
(we give an independent proof of this result of Bia\l kowski,
Erdmann and Skowro\'{n}ski in Section \ref{Sec:symmetric}
below).
However, symmetricity of an algebra is usually not easy to verify
so that the methods of this section can be used to deal with
K\"ulshammer ideals in cases where
one only has selfinjectivity; therefore the methods of
this section might be of independent interest.

We need to compute rather explicitly in the degree $0$ Hochschild homology of
self-injective algebras. This needs some theoretical preparations in order
to be able to determine a basis of the commutator subspace of the algebras
we need to deal with.

\subsection{The Nakayama-twisted centre}\label{Nakayamatwistedcentre}

Let $K$ be a field and let $A$ be a $K$-algebra.
We need to get alternative descriptions of the degree $0$
Hochschild homology.
By definition of Hochschild homology (using the standard Hochschild
complex) we have $HH_0(A)\cong A/[A,A]$.

If $A$ is symmetric, then by definition
$A\simeq \Hom_K(A,K)$ as $A$-$A$-bimodules (i.e. as $A\otimes_K A^{op}$-module),
and so we get
\begin{eqnarray*}
\Hom_K (A/[A,A],K) & \simeq &
\Hom_K(HH_0(A),K) \simeq \Hom_K(A\otimes_{A\otimes_K A^{op}}A,K) \\
& \simeq &  \Hom_{A\otimes_KA^{op}}(A,\Hom_K(A,K))
\simeq
\Hom_{A\otimes_K A^{op}}(A,A)\\
& \simeq &  HH^0(A)
\simeq  Z(A).
\end{eqnarray*}
This chain of isomorphisms is one of the main tools for the proof
of the main theorem in \cite{Z}.

If $A$ is only self-injective we shall give an analogous description.
So we need to understand $\Hom_K(A,K)$ as $A\otimes_KA^{op}$-module.
If $A$ is a self-injective $K$-algebra then still
$A\simeq \Hom_K(A,K)$ as a left $A$-module. Hence, $\Hom_K(A,K)$
is a free left $A$-module of rank $1$. Moreover,
$$\End_A(\ _A\Hom_K(A,K))\simeq \End_A(\ _AA)\simeq A$$
and so $\Hom_K(A,K)$ is a progenerator over $A$ with endomorphism
ring isomorphic to $A$, hence inducing a Morita self-equivalence of $A$.
Therefore the isomorphism class of
$\Hom_K(A,K)$ is in the Picard group $Pic_K(A)$.
Moreover, as $\Hom_K(A,K)$ is free of rank $1$
as left-module one gets that $\Hom_K(A,K)$ is in the image of
$$\Out_K(A):=\Aut_K(A)/\Inn(A)$$
in $Pic_K(A)$,
where this identification is given by sending $\alpha\in \Aut_K(A)$
to the invertible bimodule  $_1A_\alpha$
which is $A$ as vector space, on which $a\in A$ acts by multiplication
on the left and by $\alpha(a)$ on the right (cf. \cite[(37.16) Theorem]{MO}).
Hence, there is an automorphism $\nu\in \Aut_K(A)$ so that
$$\Hom_K(A,K)\simeq \ _1A_\nu$$
as $A$-$A$-bimodules and $\nu$ is unique up to an inner automorphism.

\begin{Definition}
Let $A$ be a self-injective $K$-algebra. Then there is an automorphism
$\nu$ of $A$ so that $\Hom_K(A,K)\simeq \ _1A_\nu$ as $A$-$A$-bimodules.
This automorphism is unique up to inner automorphisms and is called the
{\em Nakayama automorphism}.
\end{Definition}

For the dual of the degree $0$ Hochschild homology we get
\begin{eqnarray*}
\Hom_K(A/[A,A],K) & \simeq &
\Hom_K(HH_0(A),K)
\simeq  \Hom_K(A\otimes_{A\otimes_K A^{op}} A,K) \\
& \simeq & \Hom_{A\otimes_K A^{op}}(A,\Hom_K(A,K))
 \simeq  \Hom_{A\otimes_K A^{op}}(A,\,_1A_{\nu}) \\
& \simeq & \{a\in A\;|\;b\cdot a=a\cdot\nu(b)\mbox{~for all~} b\in A\}
\end{eqnarray*}
where the last isomorphism is given by sending a homomorphism to
the image of $1\in A$.

\begin{Definition} \label{Def-twistedcentre}
Let $A$ be a self-injective $K$-algebra with
Nakayama automorphism $\nu$. Then the {\em Nakayama twisted centre} is
defined to be
$$Z_\nu(A):=\{a\in A\;|\;b\cdot a=a\cdot \nu(b)\mbox{~for all~} b\in A\}.$$
\end{Definition}

\begin{Remark} \label{Rem-Nakayama}
{\em (1) The automorphism $\nu$ is unique only up to an inner automorphism.
If $\nu$ is inner, let $\nu(a)=u\cdot a\cdot u^{-1}$. Then
\begin{eqnarray*}
\{a\in A\;|\;b\cdot a=a\cdot\nu(b)\mbox{~for all~} b\in A\}
&=&\{a\in A\;|\;b\cdot a=a\cdot u\cdot b\cdot u^{-1}\mbox{~for all~} b\in A\}\\
&=&\{a\in A\;|\;b\cdot (a\cdot u)=(a\cdot u)\cdot b\mbox{~for all~} b\in A\}\\
&=&\{a\in A\;|\;a\cdot u\in Z(A)\}
= Z(A)\cdot u^{-1}
\end{eqnarray*}
and likewise the twisted centres with respect of two different Nakayama
automorphisms differ by multiplication by a unit.

(2) In general the Nakayama twisted centre will not be a ring.
%, as is readily verified.
However, if $z\in Z(A)$ and $a\in Z_\nu(A)$ then
$$b\cdot za=zba=za\cdot\nu(b)$$
and $za\in Z_\nu(A)$. Hence $Z_\nu(A)$
is a $Z(A)$-submodule of $A$. The module structure does not depend on the
chosen Nakayama automorphism, up to isomorphism of $Z(A)$-modules.
}
\end{Remark}

We summarise the above discussion in the following Lemma.

\begin{Lemma}\label{propertiesofNakayamatwistedcentre}
If $A$ is a self-injective $K$-algebra, then there is
an automorphism
$\nu$ of $A$, unique up to an inner automorphism so that
$\Hom_K(A,K)\simeq\ _1A_\nu$ as an $A$-$A$-bimodule and
$\Hom_K(HH_0(A),K)\simeq Z_\nu(A)$
as $Z(A)$-modules.

The selfinjective algebra $A$ is symmetric if and only if the Nakayama automorphism $\nu$
is inner. \dickebox
\end{Lemma}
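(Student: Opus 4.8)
The plan is to establish the Lemma in two parts: the existence and uniqueness of the Nakayama automorphism together with the two bimodule/$Z(A)$-module identifications, and then the characterisation of symmetricity. The first part is essentially a summary of the discussion immediately preceding the statement, so I would simply reassemble that argument. First I would recall that for a self-injective $K$-algebra $A$ the bimodule $\Hom_K(A,K)$ is, as a left $A$-module, free of rank $1$, and that its endomorphism ring (as a left module) is isomorphic to $A$; hence $\Hom_K(A,K)$ is a progenerator inducing a Morita self-equivalence, so its isomorphism class lies in $\Pic_K(A)$. Since it is free of rank $1$ on the left, by the standard description of the image of $\Out_K(A)$ in $\Pic_K(A)$ (cf. \cite[(37.16) Theorem]{MO}) there is $\nu\in\Aut_K(A)$, unique up to inner automorphism, with $\Hom_K(A,K)\cong{}_1A_\nu$ as bimodules. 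This is the Nakayama automorphism.

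Next I would record the chain of isomorphisms already displayed: $\Hom_K(HH_0(A),K)\cong\Hom_K(A\otimes_{A\otimes_K A^{op}}A,K)\cong\Hom_{A\otimes_K A^{op}}(A,\Hom_K(A,K))\cong\Hom_{A\otimes_K A^{op}}(A,{}_1A_\nu)$, the last term being $Z_\nu(A)$ via evaluation at $1$. To upgrade this to an isomorphism of $Z(A)$-modules I would note that $Z(A)=HH^0(A)$ acts on each term of the chain through its action on the bimodule $A$, respectively on $HH_0(A)$ by the usual $A/[A,A]$-structure (scalars commute with commutators), and all the maps in the chain are $Z(A)$-linear because they are induced by bimodule maps or by adjunction, both of which respect the central action; this gives $\Hom_K(HH_0(A),K)\cong Z_\nu(A)$ as $Z(A)$-modules, as asserted. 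The $Z(A)$-module structure on $Z_\nu(A)$ is the one observed in Remark~\ref{Rem-Nakayama}(2), and its independence of the choice of $\nu$ follows from the computation in Remark~\ref{Rem-Nakayama}(1): replacing $\nu$ by an inner-conjugate changes $Z_\nu(A)$ by right multiplication by a unit, which is a $Z(A)$-module isomorphism.

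For the final sentence I would argue both directions. If $A$ is symmetric then by definition $\Hom_K(A,K)\cong{}_1A_1$ as bimodules, i.e. $\Hom_K(A,K)\cong{}_1A_{\mathrm{id}}$; by the uniqueness of $\nu$ up to inner automorphism this forces $\nu$ to be inner. Conversely, if $\nu$ is inner, say $\nu(a)=uau^{-1}$ for a unit $u\in A^\times$, then the map ${}_1A_\nu\to{}_1A_{\mathrm{id}}$, $a\mapsto au^{-1}$, is an isomorphism of $A$-$A$-bimodules (right multiplication by $u^{-1}$ is a bijection, and it intertwines the two right actions precisely because $\nu$ is conjugation by $u$); composing with $\Hom_K(A,K)\cong{}_1A_\nu$ gives $\Hom_K(A,K)\cong A$ as bimodules, which is the definition of a symmetric algebra.

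I do not expect any serious obstacle here: the statement is a packaging of preceding computations, and the only point requiring a little care is verifying that every map in the Hochschild-homology chain is genuinely $Z(A)$-linear rather than merely $K$-linear, so that the identification $\Hom_K(HH_0(A),K)\cong Z_\nu(A)$ holds at the level of $Z(A)$-modules and not just vector spaces. Tracking the central action through the tensor-hom adjunction is routine but is the one spot where one must be slightly attentive.
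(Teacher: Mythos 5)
Your proposal is correct and follows essentially the same route as the paper, which states this Lemma explicitly as a summary of the discussion preceding it (the progenerator/Picard-group argument for the existence and uniqueness of $\nu$, the chain of isomorphisms identifying $\Hom_K(HH_0(A),K)$ with $Z_\nu(A)$, and the observations in Remark~\ref{Rem-Nakayama} for the $Z(A)$-module structure and its independence of the choice of $\nu$). The only quibble is a direction slip in the last paragraph: with the paper's convention for ${}_1A_\nu$ and $\nu(a)=uau^{-1}$, the bimodule isomorphism ${}_1A_\nu\to{}_1A_1$ is $a\mapsto au$ (its inverse $a\mapsto au^{-1}$ goes the other way), but this does not affect the conclusion.
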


\begin{Remark} \label{Rem-dimension-Nakayamatwisted}
{\em (1) The automorphism $\nu$ is the well-known
Nakayama automorphism. (The diligent reader might observe that
we are dealing with left modules while originally
Nakayama in \cite{Nakayama2}
dealt with right modules, so our $\nu$ would be the inverse
of the original Nakayama automorphism.)

(2) Using that $HH_0(A)\cong A/[A,A]$, the dimension of the
commutator subspace of a selfinjective algebra $A$ can therefore
be expressed as
$$\dim_K [A,A] = \dim_K A - \dim_K Z_\nu(A).$$
}
\end{Remark}

%%%%%%%%%%%%%%%%%%%%%%%%%%%%%%%%%%%%%%%%%%%%%%%%%%%%%%%%%%%%%%%%%%%%%
\subsection{How to get the Nakayama automorphism explicitly}

Let $K$ be a field and let $A$ be a self-injective $K$-algebra.
In order to compute the Nakayama automorphism $\nu$ we need to find an explicit
isomorphism $A\lra \Hom_K(A,K)$ as $A$-modules. Suppose we get two isomorphisms
$\alpha_1:A\lra \Hom_K(A,K)$ and $\alpha_2:A\lra \Hom_K(A,K)$. Then
$\alpha_2^{-1}\circ \alpha_1:A\lra A$ is an automorphism of the regular
$A$-module $A$. Hence, $\alpha_1$ will differ from $\alpha_2$ by
multiplication by an invertible element $u\in A$.
The corresponding Nakayama automorphisms $\nu_1$ and $\nu_2$ computed from
$\alpha_1$ and from $\alpha_2$ will then differ by the inner automorphism
given by conjugation with $u$. It is therefore sufficient to find one
isomorphism $\alpha:A\lra \Hom_K(A,K)$. Given such an isomorphism $\alpha$
of $A$-modules, the form $\langle x,y\rangle_\alpha:=(\alpha(y))(x)$ for
$x,y\in A$ is a non degenerate associative bilinear form on $A$.

Let $\langle\;,\;\rangle:A\times A\lra K$ be a non-degenerate
associative $K$-bilinear form on $A$ (which exists since $A$ is
self-injective), then we get a vector space isomorphism
$$A \, \stackrel{\alpha}{\lra} \, \Hom_K(A,K)~,~~~
a \, \mapsto \, \langle -,a \rangle.
$$

\begin{Lemma}\label{explicitselfinjective}
A non-degenerate associative bilinear form
$\langle\;,\;\rangle:A\times A\lra K$
induces an isomorphism
$A\lra \Hom_K(A,K)$ as left
$A$-modules by mapping $a\in A$ to the linear form
$$A\ni b\mapsto \langle b,a\rangle\in K.$$
\end{Lemma}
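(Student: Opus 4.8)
The plan is to check directly that the assignment
$$\alpha\colon A\lra \Hom_K(A,K),\qquad a\mapsto \big(b\mapsto \langle b,a\rangle\big),$$
is a well-defined isomorphism of left $A$-modules. First I would observe that $\alpha(a)$ really is a $K$-linear form for each $a$, and that $a\mapsto\alpha(a)$ is itself $K$-linear; both are immediate from bilinearity of $\langle\;,\;\rangle$ in the second variable. It then remains to verify two things: that $\alpha$ respects the left $A$-action, and that $\alpha$ is bijective.

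For the module structure, recall that $\Hom_K(A,K)$ carries the left $A$-module structure $(c\cdot\varphi)(b):=\varphi(bc)$ coming from the right regular structure on $A$ inside the $\Hom$. Then for $a,c\in A$ and all $b\in A$,
$$\alpha(ca)(b)=\langle b,ca\rangle=\langle bc,a\rangle=\alpha(a)(bc)=\big(c\cdot\alpha(a)\big)(b),$$
where the middle equality is exactly the associativity $\langle bc,a\rangle=\langle b,ca\rangle$ of the form. Hence $\alpha(ca)=c\cdot\alpha(a)$, so $\alpha$ is a homomorphism of left $A$-modules. For bijectivity, injectivity follows from non-degeneracy: if $\alpha(a)=0$ then $\langle b,a\rangle=0$ for all $b$, forcing $a=0$; and since $A$ is finite-dimensional, $\dim_K A=\dim_K\Hom_K(A,K)$, so an injective $K$-linear map between them is automatically an isomorphism. (Alternatively one reads surjectivity off directly from non-degeneracy in the first variable.)

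There is essentially no obstacle here — the lemma merely repackages the self-injectivity of $A$ as an explicit pairing — so the only thing requiring care is bookkeeping of conventions: which argument of $\langle\;,\;\rangle$ is used to define $\alpha$, and on which side the $A$-action on $\Hom_K(A,K)$ is placed, chosen so that the associativity identity is applied in the correct direction. Fixing these consistently (as above) makes the verification routine, and this explicit description is precisely what is needed for the subsequent computation of the Nakayama automorphism from a given bilinear form.
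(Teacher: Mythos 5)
Your proof is correct and follows essentially the same route as the paper: the same left-module structure $(c\cdot\varphi)(b)=\varphi(bc)$ on the dual, the same use of associativity $\langle bc,a\rangle=\langle b,ca\rangle$ to verify the homomorphism property, and bijectivity from non-degeneracy plus a dimension count. No further comment is needed.
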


\begin{proof} By the above discussions the map is
an isomorphism of vector spaces.
For verifying the module homomorphism property
recall the action of $A$ on the dual space
$\Hom_K(A,K)$; it is given by
$(b\cdot \varphi)(c)= \varphi(cb)$ for all $b,c\in A$
and all $\varphi\in \Hom_K(A,K)$.
Then, using that the bilinear form is associative we get
$$\alpha(b\cdot a)(c)=\langle c,b\cdot a\rangle=
\langle c\cdot b,a\rangle=\left(b\cdot\alpha(a)\right)(c)$$
for all $a,b,c\in A$, so the map is a homomorphism
of left $A$-modules.
\end{proof}

\begin{Proposition}\label{thenuformula}
Let $K$ be a field and let $A$ be a self-injective $K$-algebra.
Then the Nakayama automorphism $\nu$ of $A$
satisfies $\langle a,b\rangle=\langle b,\nu(a)\rangle$ for all $a,b\in A$,
and any automorphism satisfying this formula is a Nakayama automorphism.
\end{Proposition}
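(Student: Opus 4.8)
The plan is to exploit the fact, recorded in Lemma~\ref{explicitselfinjective}, that a non-degenerate associative bilinear form $\langle\;,\;\rangle$ corresponds to a fixed isomorphism $\alpha\colon A\lra\Hom_K(A,K)$ of left $A$-modules via $\alpha(a)=\langle-,a\rangle$. The Nakayama automorphism is by definition the automorphism $\nu$ for which $\Hom_K(A,K)\simeq {}_1A_\nu$ as bimodules, so the strategy is to transport the \emph{right} $A$-module structure on $\Hom_K(A,K)$ back along $\alpha$ and read off how it twists the right action on $A$. First I would write down the right action on the dual: for $\varphi\in\Hom_K(A,K)$ and $b\in A$, one has $(\varphi\cdot b)(c)=\varphi(bc)$. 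Pulling this through $\alpha$, the element $a\cdot b$ of $A$ (right action transported from the dual) must satisfy $\alpha(a\cdot b)=\alpha(a)\cdot b$, i.e. $\langle c, a\cdot b\rangle=\langle bc,a\rangle$ for all $c$.

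The next step is to compare this transported right action with the genuine right multiplication on $A$, twisted by an automorphism; that automorphism will be (a representative of) $\nu$. Concretely, $\alpha$ being a left-module isomorphism makes ${}_1A$ with this new right action isomorphic to $\Hom_K(A,K)$ as a bimodule, hence isomorphic to ${}_1A_\nu$ for the Nakayama automorphism. So there is an automorphism, call it $\nu$, with $a\cdot b = a\,\nu(b)$ where the product on the right is ordinary multiplication in $A$. Substituting into $\langle c,a\cdot b\rangle=\langle bc,a\rangle$ gives $\langle c, a\,\nu(b)\rangle = \langle bc, a\rangle$. Now using associativity of the form, $\langle c, a\,\nu(b)\rangle = \langle \nu(b)c, a\rangle$ and also $\langle bc,a\rangle=\langle b, ca\rangle$; manipulating these identities (setting $c=1$ first, then using associativity to move factors around) should collapse the condition to exactly $\langle a,b\rangle=\langle b,\nu(a)\rangle$ for all $a,b$. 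The computation is short but one has to be careful about which side each automorphism acts on, since the excerpt flags (Remark~\ref{Rem-dimension-Nakayamatwisted}) that this $\nu$ is the inverse of Nakayama's original right-module version.

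For the converse half of the statement — any automorphism $\mu$ satisfying $\langle a,b\rangle=\langle b,\mu(a)\rangle$ for all $a,b$ is a Nakayama automorphism — I would argue that such a $\mu$ makes $\alpha$ into a bimodule isomorphism $A_\mu \lra \Hom_K(A,K)$, or equivalently ${}_1A_\mu\simeq\Hom_K(A,K)$; by the defining property this forces $\mu$ to agree with $\nu$ up to an inner automorphism, and the uniqueness-up-to-inner clause in the Definition of the Nakayama automorphism then says $\mu$ itself is a Nakayama automorphism. One should also check that the displayed formula does define an automorphism (it is $K$-linear and bijective because the form is non-degenerate, and multiplicativity follows from associativity of the form), though this is routine. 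The main obstacle I anticipate is purely bookkeeping: keeping the left/right actions and the direction of the twist straight so that the associativity identities $\langle ab,c\rangle=\langle a,bc\rangle$ are applied on the correct side, and verifying that the $\nu$ produced is well-defined independently of the choice of form only up to conjugation (which is already guaranteed by the discussion preceding Lemma~\ref{explicitselfinjective}). There is no deep difficulty; the content is entirely in chasing the chain of isomorphisms through $\alpha$ and unwinding the bimodule structure on $\Hom_K(A,K)$.
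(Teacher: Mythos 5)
Your proposal is correct and follows essentially the same route as the paper: both use the left-module isomorphism $\alpha(a)=\langle -,a\rangle$ to identify $\Hom_K(A,K)$ with $_1A_\nu$ as bimodules and then unwind the right action to obtain $\langle a,b\rangle=\langle b,\nu(a)\rangle$, with the converse read off from the same identification. The only cosmetic difference is that you transport the right action back to $A$ and specialize $c=1$, whereas the paper evaluates the chain $\langle a,b\rangle=(b\cdot\varphi(1))(a)=(\varphi(1)\cdot a)(b)$ directly; the implicit choice of a representative of $\nu$ (unique only up to inner automorphisms) is handled the same way in both.
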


\begin{proof}
There is a non-degenerate associative bilinear
form on $A$, which induces an isomorphism between $A$ and the linear forms
on $A$ as $A$-modules by Lemma~\ref{explicitselfinjective}.
%and the remarks preceding it.
The isomorphism gives an isomorphism of $A$-$A$-bimodules of
$_1A_\nu$ and $\Hom_K(A,K)$
by
\begin{eqnarray*}
_1A_\nu&\stackrel{\varphi}{\lra} \Hom_K(A,K)\mbox{~~~,~~~}
a \mapsto \langle -,a\rangle=\varphi(a).
\end{eqnarray*}
By the twisted bimodule action on $_1A_{\nu}$ we have
that $\varphi(1)\cdot a = \varphi(1\cdot a) =
\varphi(\nu(a))$ and
$b\cdot\varphi(1) = \varphi(b\cdot 1) = \varphi(b)$.
Since for $f\in \Hom_K(A,K)$ the $A$-$A$-bimodule action
on $\Hom_K(A,K)$ is given by $(fa)(b)=f(ab)$ and $(af)(b)=f(ba)$
for all $a,b\in A$, one gets
\begin{eqnarray*}
\langle a,b\rangle&=&(\varphi(b))(a)
=(b\cdot \varphi(1))(a) = \varphi(1)(ab)
=(\varphi(1)\cdot a)(b) = \varphi(\nu(a))(b)
 = \langle b,\nu(a)\rangle.
\end{eqnarray*}
Hence, the Nakayama automorphism has the above property.
Conversely, if an automorphism $\nu$ satisfies
$\langle a,b\rangle=\langle b,\nu(a)\rangle$ for all $a,b\in A$,
then the mapping $A\lra \Hom_K(A,K)$ given by $a\mapsto \langle -,a\rangle$
gives an isomorphism
of $A$ and $\Hom_K(A,K)$ as $A$-modules, inducing the element $_1A_\nu$ in the
Picard group of $A$.
\end{proof}

We shall later need such a
bilinear form explicitly.
The following very useful result
can be found in \cite[Proposition 2.15]{Z2}; see also
\cite[Proposition 3.1]{hz-tame} for a proof in the case of
weakly symmetric algebras.

\begin{Proposition}
\label{prop:form}
Let $A=KQ/I$ be a self-injective algebra given by the quiver $Q$ and
ideal of relations $I$, and fix
a $K$-basis ${\mathcal B}$ of $A$ consisting of pairwise
distinct non-zero paths of the quiver $Q$. Assume that
${\mathcal B}$ contains a basis of the socle $soc(A)$ of $A$.
Define a $K$-linear mapping $\psi$ on the basis elements by
$$
\psi(b)=\left\{
\begin{array}{ll} 1 & \mbox{if $b\in soc(A)\setminus\{0\}$} \\
                  0 & \mbox{otherwise}
\end{array} \right.
$$
for $b\in {\mathcal B}$.
Then an associative non-degenerate $K$-bilinear
form $\langle-,-\rangle$ for $A$ is given by
$\langle x,y\rangle := \psi(xy).$
\end{Proposition}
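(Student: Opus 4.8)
The plan is to verify three properties of the pairing $\langle x,y\rangle:=\psi(xy)$: bilinearity, associativity, and non-degeneracy. Bilinearity is immediate since $\psi$ is $K$-linear and multiplication in $A$ is $K$-bilinear, so there is nothing to do there. Associativity, meaning $\langle xy,z\rangle=\langle x,yz\rangle$, is equally immediate: both sides equal $\psi(xyz)$ by associativity of multiplication in $A$. So the entire content of the statement is non-degeneracy, and that is where the argument must be carried out carefully.

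For non-degeneracy I would argue as follows. First reduce to the case of a basic connected self-injective algebra; writing $A=KQ/I$ already presupposes $A$ is basic, and one may treat each block (connected component of $Q$) separately. Fix a vertex $e$ of $Q$; since $A$ is self-injective and basic, the projective indecomposable $eA$ (or $Ae$) has simple socle, and that socle is spanned by a single path $b_e$ in our basis $\mathcal B$ starting (resp. ending) at $e$ — here we use the hypothesis that $\mathcal B$ contains a basis of $\soc(A)$, together with the fact that $\soc(A)=\bigoplus_e \soc(eA)$ is spanned by the socle paths $b_e$. Now suppose $x\in A$ satisfies $\langle x,y\rangle=0$ for all $y\in A$; I want $x=0$. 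Decompose $x=\sum_e xe$ along the primitive idempotents; it suffices to show each component $xe$ vanishes, so we may assume $x=xe\in Ae$. If $x\neq 0$, pick a path $b$ in the support of $x$ (with respect to $\mathcal B$) whose length is maximal; say $b$ starts at the vertex $f$. Then $fx\neq 0$ since $b=fb$ lies in its support, so we may further assume $x=fxe$ lies in the corner $fAe$.

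The key step — and the place where the self-injective hypothesis is genuinely used — is to produce an element $y$ with $\psi(xy)\neq 0$, i.e. to "complete" $b$ to the socle path. Because $A$ is self-injective and basic, for the indecomposable projective $Ae$ the socle $\soc(Ae)=Kb_e$ is simple and the pairing $fAe\times eAf\to \soc(Ae)=Kb_e$ given by $(u,v)\mapsto uv$ (projected onto $Kb_e$) is a perfect pairing of the spaces $\Hom_A(Af, Ae)\cong fAe$ and $\Hom_A(Ae,Af)\cong eAf$ — this is precisely the statement that $Ae$ has simple socle and $\Hom_K(Ae,K)$ is projective, reformulated via the bilinear form on corners. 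Hence from $0\neq x\in fAe$ there is some $y\in eAf$ with $xy\in Kb_e\setminus\{0\}$, and then $\langle x,y\rangle=\psi(xy)=$ (the coefficient of $b_e$ in $xy$) $\neq 0$, a contradiction. This shows the form has zero left radical; the argument for the right radical is symmetric, using $\soc(eA)$ instead. Since $A$ is finite-dimensional, vanishing left radical already forces non-degeneracy, so we are done.

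The main obstacle is the last paragraph: making precise, with the minimal hypotheses stated, that "$\soc$ of each indecomposable projective is simple and spanned by a basis path" and that multiplication of corners into the socme line is a perfect pairing. One clean way to package this is to invoke the standard fact that a basic self-injective algebra $A$ admits \emph{some} non-degenerate associative bilinear form $\beta$ (a Nakayama form), and then compare $\beta$ with $\langle-,-\rangle$: writing $\beta(x,y)=\lambda(xy)$ for a suitable linear functional $\lambda$ vanishing on $\rad(A)\cdot\rad(A)$-complements appropriately, one checks that $\lambda$ and $\psi$ agree up to an invertible diagonal/triangular change, because both kill everything outside the socle and are non-zero on each socle line; non-degeneracy of $\langle-,-\rangle$ then follows from that of $\beta$. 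I would present the direct corner-pairing argument as the main line and mention the comparison with a known Nakayama form as the conceptual reason it works, citing \cite[Proposition 2.15]{Z2} and \cite[Proposition 3.1]{hz-tame} for the detailed verification in the (weakly) symmetric case.
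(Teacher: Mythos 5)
The paper itself gives no proof of this proposition; it only refers to \cite[Proposition 2.15]{Z2} and \cite[Proposition 3.1]{hz-tame}, so there is no in-text argument to measure yours against. Your overall strategy is the standard and correct one: bilinearity and associativity are formal, and non-degeneracy should come from multiplying a nonzero element up into the (simple) socle of an indecomposable projective--injective, which is exactly where self-injectivity and the hypothesis that $\mathcal{B}$ contains a socle basis enter.

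However, the key step is set up with the wrong vertices, and as written it fails whenever the Nakayama permutation is nontrivial. You assert a perfect pairing $fAe\times eAf\to \soc(Ae)=Kb_e$, $(u,v)\mapsto uv$. This does not typecheck: for $u\in fAe$ and $v\in eAf$ the product $uv$ lies in $fAf$, not in $Ae$, and $fAf$ contains no socle elements at all unless the Nakayama permutation fixes $f$. (Concretely, for the self-injective Nakayama algebra $KQ/J^2$ with $Q$ an oriented cycle on $n\ge 2$ vertices, $\soc(fA)$ is spanned by the arrow at $f$, which lies in $fAg$ for a neighbouring vertex $g\ne f$, so $xy$ is never a nonzero socle element for $x\in fAe$, $y\in eAf$.) The correct statement is: a nonzero $x\in fAe$ generates a nonzero right submodule $xA$ of the injective indecomposable right module $fA$, which therefore contains the simple socle $\soc(fA)=Kb$, where the socle basis path $b$ lies in $fAg$ for the vertex $g$ attached to $f$ by the Nakayama permutation; hence there is $y\in eAg$ (not $eAf$) with $xy=b$ and $\psi(xy)=1$. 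With this correction you must also justify the reduction to a single corner component: for that $y\in eAg$ one has $\psi(xy)=\sum_u\psi\bigl((uxe)(eyg)\bigr)$, and the terms with $u\ne f$ vanish precisely because $uAg$ meets $\soc(A)$ only for $u=f$; this is why the other components of $x$ do not interfere, and it also shows that $f$ must be chosen as the socle vertex of the relevant column, not merely as the source of a maximal-length path in the support of $x$. These are repairable slips rather than a wrong approach, but as written your argument only covers the weakly symmetric case, whereas the proposition is stated for arbitrary self-injective $KQ/I$.
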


\begin{Remark}
{\em
The above bilinear form is in general not symmetric,
even if the algebra $A$ is symmetric. For explicit examples
we refer to \cite[Section 4, proof of main theorem, part (3)]{HS1}
and \cite{Z2}.
}
\end{Remark}

Actually, this form is basically the only possible form, at least for
finite dimensional basic
selfinjective algebras over an algebraically closed field $K$.

\begin{Proposition}
Let $A$ be a finite dimensional basic selfinjective $K$-algebra over an
algebraically closed field $K$.
Then for every non degenerate associative bilinear form
$\langle.\;.\rangle:A\times A\lra K$
there is a $K$-basis $\mathcal B$ containing a $K$-basis of the socle so that
$\langle x,\;y\rangle=\psi(xy)$ where
$$\psi(b)=\left\{
\begin{array}{ll} 1 & \mbox{if $b\in soc(A)\setminus\{0\} $} \\
                  0 & \mbox{otherwise}
\end{array} \right.
$$
for $b\in {\mathcal B}$.
\end{Proposition}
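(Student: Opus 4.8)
The plan is to start from an arbitrary non-degenerate associative bilinear form $\langle-,-\rangle$ on $A$, and, using Proposition~\ref{thenuformula}, let $\nu$ be the associated Nakayama automorphism, so that $\langle a,b\rangle=\langle b,\nu(a)\rangle$ for all $a,b\in A$. By Lemma~\ref{propertiesofNakayamatwistedcentre} the isomorphism class of $\Hom_K(A,K)$ as a bimodule equals that of $_1A_\nu$, and this is independent of the chosen form. Fix now any $K$-basis $\mathcal B$ of $A$ consisting of pairwise distinct non-zero paths that contains a $K$-basis of $\soc(A)$ (such a basis exists for a basic selfinjective algebra presented by quiver and relations). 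Proposition~\ref{prop:form} produces from $\mathcal B$ a second non-degenerate associative form $\langle-,-\rangle_\psi$ given by $\langle x,y\rangle_\psi=\psi(xy)$. The two forms induce two isomorphisms $A\to\Hom_K(A,K)$ of left $A$-modules (via Lemma~\ref{explicitselfinjective}), and hence differ by multiplication by a unit $u\in A$: explicitly $\langle x,y\rangle = \langle x, uy\rangle_\psi$ for all $x,y$, for a suitable invertible $u$.

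The core of the argument is then to absorb this unit $u$ into the basis. Decompose $1=e_1+\dots+e_r$ into primitive orthogonal idempotents coming from the quiver vertices; since $A$ is basic, $uA u^{-1}$ is again an algebra presented by the same quiver, and conjugation by $u$ permutes the primitive idempotents up to inner adjustment. The first step is to reduce, by replacing $\mathcal B$ with $u\cdot\mathcal B$ or $\mathcal B\cdot u^{-1}$ (whichever side makes the bookkeeping cleanest) and checking that the set of paths is merely rescaled/permuted within each $e_i A e_j$ component, that we may assume $u$ acts trivially on the "coarse" structure. The second, more delicate step is to handle the socle: one must verify that $u\cdot\mathcal B$ (or the appropriately modified basis) still consists of distinct non-zero paths and still contains a basis of $\soc(A)$. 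Here one uses that $\soc(A)$ is a two-sided ideal, so $u\cdot\soc(A)=\soc(A)$, and that the one-dimensional blocks of the socle, indexed by $\nu$ (or rather by the socle of each indecomposable projective), are permuted by multiplication by $u$; after rescaling each such generator the function $\psi$ is unchanged. Finally one checks that with this adjusted basis $\mathcal B'$ the form $\psi'(xy)$ built from $\mathcal B'$ coincides with the original $\langle-,-\rangle$: the scaling of socle generators has been chosen precisely to cancel the effect of $u$.

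I expect the main obstacle to be the bookkeeping in the socle step: one needs a basis of paths that is simultaneously (i) a basis of $A$, (ii) socle-compatible, and (iii) such that left (or right) multiplication by the unit $u$ sends it to a set that is again a basis of paths — a priori $u\cdot(\text{path})$ is only a $K$-linear combination of paths. The resolution is to choose $u$ wisely: over an algebraically closed field $u$ can be taken in the normaliser of a maximal torus of $A^\times$, i.e. $u$ can be chosen so that $u e_i = \lambda_i e_{\sigma(i)}$ modulo the radical for scalars $\lambda_i$ and a permutation $\sigma$, and more generally so that conjugation by $u$ stabilises the span of any chosen system of paths. One then replaces the original presentation $KQ/I\to A$ by its composite with conjugation by $u$, which only reorders the arrows and rescales them, transporting $\mathcal B$ to another path basis; repeating the argument with this transported basis makes the unit disappear. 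The remaining verifications — associativity, non-degeneracy, and the value of $\psi'$ on the rescaled socle — are then routine, using Proposition~\ref{prop:form} in reverse.
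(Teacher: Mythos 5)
Your central identity is right: two non-degenerate associative forms give two isomorphisms $A\lra \Hom_K(A,K)$ of left $A$-modules (Lemma~\ref{explicitselfinjective}), so they differ by right multiplication by a unit $u$, i.e. $\langle x,y\rangle=\psi(xyu)$ with $\psi$ coming from a fixed path basis as in Proposition~\ref{prop:form}. But the way you then dispose of $u$ contains a genuine gap. The unit $u$ is \emph{determined} by the two forms; you cannot ``choose $u$ wisely'' to lie in the normaliser of a maximal torus, nor arrange that conjugation by $u$ stabilises the span of a chosen system of paths. In general $u\cdot b$ for a path $b$ is an honest linear combination of paths, and no re-presentation of $A$ turns $u\cdot{\mathcal B}$ back into a set of paths. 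As written, the ``first step'' and its ``resolution'' do not go through. The good news is that the obstacle you are fighting is self-inflicted: the Proposition only asks for a $K$-basis containing a basis of the socle, \emph{not} a basis of paths. Once you drop the path requirement, your idea works verbatim: take ${\mathcal B}':={\mathcal B}u^{-1}$. Since $\soc(A)$ is a two-sided ideal and right multiplication by $u^{-1}$ is bijective, $bu^{-1}\in\soc(A)$ iff $b\in\soc(A)$, so ${\mathcal B}'$ still contains a socle basis; the linear form $z\mapsto\psi(zu)$ takes the value $1$ exactly on the socle elements of ${\mathcal B}'$ and $0$ on the rest, and $\langle x,y\rangle=\psi(xyu)$ is then the form attached to ${\mathcal B}'$ by the recipe of Proposition~\ref{prop:form}. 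The opening appeal to the Nakayama automorphism is not needed anywhere.

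For comparison, the paper's proof does not compare two forms at all. It observes that associativity gives $\langle x,y\rangle=\langle 1,xy\rangle$, so the form is completely encoded by the single linear map $\psi:=\langle 1,-\rangle$; one then only has to build a basis adapted to $\psi$: non-degeneracy and the Wedderburn--Mal\c cev decomposition show $\psi$ does not vanish on the one-dimensional socle constituents (here basicness is used), so one rescales socle generators $s_i$ to get $\psi(s_i)=1$ and completes by bases of $\ker\psi$ inside each $Ae_j$. That argument is shorter and avoids the unit entirely; your (repaired) route is a legitimate alternative but passes through Proposition~\ref{prop:form} and the Picard-group description of $\Hom_K(A,K)$, which is heavier machinery for the same conclusion.
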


\begin{proof}
Given an associative bilinear form $\langle.,.\rangle:A\times A\lra K$
there is a linear map $\psi:A\lra K$ defined by $\psi(x):=\langle 1,x\rangle$
and for any $x,y\in A$ one gets $\langle x,y\rangle=\langle 1,xy\rangle=\psi(xy)$.
Hence $\psi$ determines the associative bilinear map and the associative
bilinear map determines $\psi$.

The algebra is basic and so the socle of $A$ is a direct sum of pairwise non-isomorphic
one-dimensional simple $A$-modules. Let $\{s_1,\dots,s_n\}\in A$ so that $s_iA$ is simple
for every $i\in\{1,2,\dots,n\}$ and so that $\soc(A)=<s_1,\dots,s_n>_K$.

Given an associative non degenerate bilinear form $\langle.,.\rangle$
then $\langle .,s_i\rangle:A\lra K$ is a non zero linear form on $A$, since
the bilinear form is non degenerate. Hence there is an element $a\in A$ so that
$\langle a,s_i\rangle\neq 0$. Now, by the Wedderburn-Mal\c cev theorem, there
is an element $\rho\in \rad(A)$ so that $a=\sum_{i=1}^n\lambda_ie_i+\rho$
for scalars $\lambda_i\in K$, and
where $e_i^2=e_i$ is an indecomposable idempotent of $A$, where
$e_{\nu^{-1}(i)}s_i=s_i$, and where $e_{\nu^{-1}(j)}s_i=0$ for $j\neq i$.
Hence,
$$\langle a,s_i\rangle=\langle 1,as_i\rangle=\langle
1,\lambda_{\nu^{-1}(i)}s_i\rangle=\lambda_{\nu^{-1}}(i)$$
We replace $s_i$ by $\lambda_{\nu^{-1}(i)}^{-1}s_i$ and get $\langle 1,s_i\rangle=1$.
Take a $K$-basis
${\mathcal B}_i$ of $\ker(\langle .,s_i\rangle)$ in $Ae_{\nu^{-1}(i)}$. Then,
since $A=\bigoplus_{j=1}^n Ae_j$,
$${\mathcal B}:=\bigcup_{i=1}^n{\mathcal B}_i\cup\{s_1,s_2,\dots,s_n\}$$
is a $K$-basis of $A$ satisfying the hypotheses of Proposition~\ref{prop:form}.
Moreover, if $xy\in {\mathcal B}$, then there is a unique $e_i^2=e_i$ so that
$xye_i=xy$, and so $$\langle x,y\rangle=\langle 1,xy\rangle=
\sum_{i=1}^n \langle 1,xye_i\rangle=
\left\{\begin{array}{ll}1&\mbox{ if }xy\in \soc(A)\\0&\mbox{ else}\end{array}
\right.$$
This shows the statement.
\end{proof}

%%%%%%%%%%%%%%%%%%%%%%%%%%%%%%%%%%%%%%%%%%%%%%%%%%%%%%%%%%%%%%%%%
%%%%%%%%%%%%%%%%%%%%%%%%%%%%%%%%%%%%%%%%%%%%%%%%%%%%%%%%%%%%%%%%%
\section{Deformed preprojective algebras of type $L$}
\label{typeLalgebras}

\subsection{$K$-bases of the deformed preprojective algebras of type $L$}
The aim of this section is to obtain an explicit vector space basis
for any deformed preprojective algebra of type $L$ and to deduce some
structural properties. In particular we shall get the Cartan matrices
and provide an independent proof of a result of Bia\l kowski, Erdmann
and Skowro\'{n}ski \cite{BESIII}
that the deformed preprojective algebras of type $L$ are symmetric
algebras.

For the conveneince of the reader we start by recalling
from the introduction the
definition of the deformed preprojective algebras of type $L$.

Let $K$ be a field.
For any $n\in\N$ and any polynomial $p(X)\in K[X]$
let $L_n^p$ be the $K$-algebra given by the
following quiver with $n$ vertices $0,1,\dots,n-1$ of the form
\unitlength0.9cm
\begin{center}
\begin{picture}(10,2)
\put(1,1){$\bullet$}
\put(1,0.5){\scriptsize $0$}
\put(2,1){$\bullet$}
\put(2,0.5){\scriptsize $1$}
\put(3,1){$\bullet$}
\put(3,0.5){\scriptsize $2$}
\put(4,1){$\bullet$}
\put(4,0.5){\scriptsize $3$}
\put(8,1){$\bullet$}
\put(7.8,0.4){\scriptsize $n-2$}
\put(9,1){$\bullet$}
\put(9,0.4){\scriptsize $n-1$}
\put(5,1){$\cdots$}
\put(6,1){$\cdots$}
\put(7,1){$\cdots$}
\put(1.1,1.3){\vector(1,0){.8}}
\put(2.1,1.3){\vector(1,0){.8}}
\put(3.1,1.3){\vector(1,0){.8}}
\put(8.1,1.3){\vector(1,0){.8}}
\put(8.9,.9){\vector(-1,0){.8}}
\put(3.9,.9){\vector(-1,0){.8}}
\put(2.9,.9){\vector(-1,0){.8}}
\put(1.9,.9){\vector(-1,0){.8}}
\put(0.4,1.1){\circle{1}}
\put(.88,1.1){\vector(0,-1){.1}}
\put(0,1.6){\small $\epsilon$}
\put(1.3,1.4){\small $a_0$}
\put(2.3,1.4){\small $a_1$}
\put(3.3,1.4){\small $a_2$}
\put(8.3,1.4){\small $a_{n-2}$}
\put(1.3,.65){\small $\ol a_0$}
\put(2.3,.65){\small $\ol a_1$}
\put(3.3,.65){\small $\ol a_2$}
\put(8.3,.65){\small $\ol a_{n-2}$}
\end{picture}
\end{center}
subject to the following relations
$$a_s\overline a_s+\overline a_{s-1}a_{s-1}=0\mbox{ for all }
s\in\{1,\dots ,n-2\}~,
$$
$$\overline a_{n-2}a_{n-2} =0~,~~~~~
\epsilon^{2n}=0~,~~~~~
\epsilon^2+a_0\overline a_0+\epsilon^3p(\epsilon)=0.
$$
Our first aim is to
give a $K$-basis of the algebra $L_n^p$.
We start by providing a generating set.
Considering a path starting at the vertex $i$ and ending at
the vertex $j$, we have two cases.

Firstly suppose that the path does not contain $\epsilon$.

If $i<j$, then using the relations
$a_s\overline a_s+\overline a_{s-1}a_{s-1}=0\mbox{ for all }
s\in\{1,\dots ,n-2\}$ in $L_n^p$
we may replace the path, up to a sign,
by one of the following elements of $L_n^p$:

$\bullet$ the path $a_ia_{i+1}\dots a_{j-1}$

$\bullet$ or the path $a_ia_{i+1}\dots a_{j-1}a_j\dots a_\ell\ol a_\ell
\ol a_{\ell-1}\dots \ol a_j$ (for some $j\le l\le n-2$)

$\bullet$ or by $0$.

In fact up to a sign we can order the arrows in the path so that
all $a_r$'s come first and then all the $\ol a_r$'s; we can do this
unless we hit a subpath $\ldots \ol a_{n-2}a_{n-2}\ldots$ in which case
the path becomes 0 in $L_n^p$.

Similarly, if $i\geq j$
we may replace the given path, up to a sign, by one of the following elements:

$\bullet$ the path $\ol a_{i-1}\ol a_{i-2}\dots \ol a_{j}$

$\bullet$ or the path
$a_ia_{i+1}\dots a_\ell\ol a_\ell
\ol a_{\ell-1}\dots \ol a_i \ldots \ol a_j$ (for some $i\le l\le n-2$)

$\bullet$ or by $0$.

Secondly, suppose the path contains $\epsilon$.

Using the relations
$\epsilon^2+\epsilon^3p(\epsilon)+a_0\ol a_0=0$ and
$\epsilon^{2n}=0$
we may replace any
path containing powers of $\epsilon$ by a linear combination of paths
containing only $\epsilon$.
Moreover, using the relations
$a_s\overline a_s+\overline a_{s-1}a_{s-1}=0$ for all $s\in\{1,\dots ,n-2\}$
and the fact that $a_0\ol a_0$ commutes with $\epsilon$
(because $a_0\ol a_0=-(\epsilon^2+\epsilon^3p(\epsilon))$)
we can move all $a_r$'s in the path to the right of $\epsilon$.
(Note that by combining these two reductions we can indeed
guarantee that in each path
occurring in the linear combination $\epsilon$ occurs only once.)
Thus
the given path represents the same element in $L_n^p$ as a linear combination
of paths of the following forms

$\bullet~ \ol a_{i-1}\ol a_{i-2}\dots \ol a_{0}\epsilon a_0 a_1\dots a_{j-1}$

$\bullet~\ol a_{i-1}\ol a_{i-2}\dots \ol a_{0}\epsilon a_0 a_1\dots
a_{\ell-1}a_{\ell}\ol a_{\ell}\ol a_{\ell-1}\dots \ol a_{j}
\mbox{~~~(for some $j\le l\le n-2$)}.$

If $i<j$ these paths are all non-zero and they will be part
of the basis to be given below. However, if $i\ge j$ some of the
paths of the latter type vanish, so we shall now derive a different expression
for these.

To this end observe that by using the relations
$a_s\overline a_s+\overline a_{s-1}a_{s-1}=0$ we can successively move the
$\ol a_r$'s to the left and obtain
%for $\ell>j$ {\tt this still has to be checked...}
$$
\ol a_{i-1}\ol a_{i-2}\dots \ol a_{0}\epsilon a_0 a_1\dots
a_{\ell}\ol a_{\ell}\ol a_{\ell-1}\dots \ol a_{j}
= \pm\,
\ol a_{i-1}\ol a_{i-2}\dots\ol a_0\epsilon(a_0\ol a_0)^{\ell-j+1}
a_0a_1\dots a_{j-1}.
$$
Moreover, using that $a_0\ol a_0$ commutes with $\epsilon$ and then
moving the $a_r$'s to the left we get
\begin{eqnarray*}
\ol a_{i-1}\ol a_{i-2}\lefteqn{\dots \ol a_{0}\epsilon a_0 a_1\dots
a_{\ell-1}a_{\ell}\ol a_{\ell}\ol a_{\ell-1}\dots \ol a_{j}}\\
&=& \pm\,
\ol a_{i-1}\ol a_{i-2}\dots\ol a_0(a_0\ol a_0)^{\ell-j+1}\epsilon a_0a_1\dots a_{j-1}\\
&=& \pm\,
\ol a_{i-1}\ol a_{i-2}\dots(\ol a_0a_0)^{\ell-j+1}\ol a_0\epsilon a_0a_1\dots a_{j-1}\\
&=&\pm\,
\begin{cases}
a_ia_{i+1}\dots a_{i+\ell-j}\ol a_{i+\ell-j} \ol a_{i+\ell-j-1}\dots \ol a_1\ol a_0\epsilon a_0a_1\dots a_{j-1}&\mbox{ if } i+\ell-j\leq n-2\\ 0&\mbox{ else }\end{cases}
\end{eqnarray*}

The following result provides explicit vector space bases for the
deformed preprojective algebras $L_n^p$ of type $L$.
Note that the bases do not involve the deformation
polynomial $p$, i.e. the bases is independent of the polynomial.

\begin{Proposition}\label{abasis}
A $K$-basis of $L_n^p$ is given by the following paths
between the vertices $i$ and $j$, where $i,j\in\{0,1,\ldots,n-1\}$.
$$\begin{array}{ll}
(1)\mbox{~~}a_ia_{i+1}\dots a_{j-1}
& \mbox{for~~} i<j \\
(2)\mbox{~~}a_ia_{i+1}\dots a_{j-1}a_j\dots
a_\ell\ol a_\ell \ol a_{\ell-1}\dots \ol a_j
& \mbox{for~~} i<j\mbox{~~and some~~}j\le \ell\le n-2 \\
(3)\mbox{~~}\ol a_{i-1}\ol a_{i-2}\dots \ol a_{j}
& \mbox{for~~}i\ge j \\
(4)\mbox{~~}a_ia_{i+1}\dots  a_\ell\ol a_\ell \ol a_{\ell-1}\dots
\ol a_i\dots\ol a_j
& \mbox{for~~}i \ge j\mbox{~~and some~~}i\le \ell\le n-2 \\
(5)\mbox{~~}
\ol a_{i-1}\ol a_{i-2}\dots \ol a_{0}\epsilon a_0 a_1\dots a_{j-1}
& \mbox{for any~~}i,j \\
(6)\mbox{~~}
\ol a_{i-1}\ol a_{i-2}\dots \ol a_{0}\epsilon a_0 a_1\dots
a_{\ell-1}a_{\ell}\ol a_{\ell}\ol a_{\ell-1}\dots \ol a_{j}
& \mbox{for~~} i<j\mbox{~~and some~~}j\le \ell \le n-2 \\
(7)\mbox{~~}
a_ia_{i+1}\dots a_\ell\ol a_\ell \ol a_{\ell-1}\dots \ol a_1\ol a_0\epsilon a_0a_1\dots a_{j-1}
& \mbox{for~~}i\ge j\mbox{~~and some~~}i\le \ell\le n-2
\end{array}
$$
\end{Proposition}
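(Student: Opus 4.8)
The statement claims that the seven families of paths listed in Proposition~\ref{abasis} form a $K$-basis of $L_n^p$. The strategy has two parts: first show the given paths \emph{span} $L_n^p$ as a $K$-vector space, and then show they are \emph{linearly independent}, equivalently that their cardinality equals $\dim_K L_n^p$, which we compute independently. The spanning part is essentially already carried out in the discussion preceding the proposition: every path in the quiver $\overline{Q}$ can, using the defining relations, be rewritten (up to sign and up to linear combinations) as one of the listed paths or as $0$. One must only check that the case analysis is exhaustive — that there is no path whose reduction escapes all seven forms — and that the reductions terminate (e.g. by assigning a suitable length/degree function that strictly decreases, or by arguing that after moving all $\epsilon$'s together and all $a_r$'s to one side of $\epsilon$, one is left with a normal form). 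The key relations used are $a_s\overline a_s = -\overline a_{s-1}a_{s-1}$ to interchange and push arrows past one another, $\overline a_{n-2}a_{n-2}=0$ to kill long subpaths, and $a_0\overline a_0 = -(\epsilon^2+\epsilon^3 p(\epsilon))$ together with $\epsilon^{2n}=0$ to reduce powers of $\epsilon$ and to see that $a_0\overline a_0$ is central with respect to $\epsilon$.

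\medskip

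\noindent\textbf{Linear independence via a dimension count.} For the second part I would fix a start vertex $i$ and an end vertex $j$ and count, for each ordered pair $(i,j)$, how many of the listed paths go from $i$ to $j$; summing gives a candidate value for $\dim_K L_n^p$. The paths of types (1)--(4) involve no $\epsilon$ and live, as a set, inside the ordinary preprojective algebra picture for the $A_n$-tail together with the extra arrows, so their count can be read off combinatorially from the ranges of the index $\ell$. The paths of types (5)--(7) each contain exactly one $\epsilon$; note however that this is \emph{not} the full story, since $\epsilon$ itself and its powers $\epsilon,\epsilon^2,\dots,\epsilon^{2n-1}$ at the vertex $0$ must also appear — and indeed type (5) with $i=j=0$ gives $\epsilon$, while higher powers $\epsilon^k$ are recovered from type (5)-type paths composed with themselves, using $\epsilon^2 = -a_0\overline a_0 - \epsilon^3 p(\epsilon)$ to see that the span is correctly accounted for. (This is the point where one must be careful that the list is not missing elements like $\epsilon^k a_0\cdots a_{j-1}$ for $k\ge 2$; these are not separate basis elements because $\epsilon^2$ is expressible via $a_0\overline a_0$, so they fall back into types (5) and (6).) Adding everything up should yield the dimension, which — as promised in the introduction — also matches the trace of the Cartan matrix computed in this section.

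\medskip

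\noindent\textbf{Alternative, cleaner route to independence.} Rather than a brute-force count, a more robust argument is to exhibit an explicit $K$-linear functional separating the candidate basis, or to argue via a filtration: order the listed paths by $\epsilon$-degree and then by ordinary path length, and show that in this order no nontrivial relation among them can hold because the leading terms (with respect to a suitable admissible ordering on paths in $K\overline Q$, compatible with the ideal of relations) are pairwise distinct. Concretely, one chooses a monomial order on paths in which the defining relations form (a completion to) a Gröbner basis; the listed paths are then precisely the standard monomials, hence automatically linearly independent. This reduces the whole proposition to: (a) the seven families are exactly the normal forms under the rewriting system given by the relations, and (b) the rewriting system is confluent and terminating. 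Showing confluence — i.e. that the overlaps between the relations $a_s\overline a_s+\overline a_{s-1}a_{s-1}$, $\overline a_{n-2}a_{n-2}$, $\epsilon^{2n}$, and $\epsilon^2+a_0\overline a_0+\epsilon^3 p(\epsilon)$ resolve — is the main obstacle, and requires genuinely checking the $S$-polynomials (overlap ambiguities), in particular the interaction of the $\epsilon^2 = \cdots$ relation with $\epsilon^{2n}=0$ and with $a_0\overline a_0$, where the deformation polynomial $p$ enters; the upshot, consistent with the remark after the proposition, is that confluence holds independently of $p$.
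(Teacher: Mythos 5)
Your spanning argument follows the paper's own reduction (the discussion preceding Proposition~\ref{abasis}) and is fine. The problem is linear independence: neither of your two routes is actually carried out, and the first one is circular as stated. You propose to count the listed paths and compare with $\dim_K L_n^p$, ``which we compute independently'' --- but no independent computation of $\dim_K L_n^p$ exists at this point in the paper; the dimension $\frac{1}{3}n(n+1)(2n+1)$ and the Cartan matrix in Remark~\ref{Rem-Cartan-dim} are \emph{derived from} Proposition~\ref{abasis}, not the other way around. Since the defining relations are not homogeneous (because of $\epsilon^2+a_0\ol a_0+\epsilon^3p(\epsilon)=0$), the quotient $K\ol Q/I$ could a priori collapse further than the rewriting suggests, so the spanning argument only gives the upper bound $\dim_K L_n^p\le |\mathcal{B}|$; a matching lower bound needs a genuine argument. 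Your second route (standard monomials for a noncommutative Gr\"obner basis / confluent rewriting system) would supply exactly that lower bound, but you explicitly defer the resolution of all overlap ambiguities --- in particular the interaction of $\epsilon^2+a_0\ol a_0+\epsilon^3p(\epsilon)$ with $\epsilon^{2n}$ and with the chain of relations $a_s\ol a_s+\ol a_{s-1}a_{s-1}$ --- and that is precisely the content of the proposition, not a detail.

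For comparison, the paper proves independence directly and more elementarily: it assumes a vanishing $K$-linear combination of the listed paths from $i$ to $j$ (reducing to $i\ge j$ by symmetry), and kills the coefficients one by one by passing to suitable quotients --- first modulo $J^{i-j+1}$ (where $J$ is the arrow ideal) to isolate the shortest path of type (3), then modulo the two-sided ideal $L_n^pe_{i+1}L_n^p$ to isolate the type (5) path (which is the only remaining summand not passing through vertex $i+1$), and then inductively on $\ell$ modulo $L_n^pe_{\ell+2}L_n^p$ and a further power of $J$ to separate the type (4) and type (7) terms. If you want to salvage your approach, you must either carry out the confluence check in full or replace the dimension count by an argument of this filtration type; as written, the independence half of the proof is missing.
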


\begin{Remark} \label{Rem-abasis}
{\em
(1) In type (3) the case $i=j$ yields an empty product
which has to be interpreted as the
trivial paths $e_i$ for every vertex $i$.

(2) The longest paths in this basis of $L_n^p$ are of length
$2n-1$, occurring in (5) for $i=j=n-1$ and in
(7) for $i=j\in\{0,1,\ldots,n-2\}$ and $l=n-2$,
respectively.
These elements span the socles of the projective
indecomposable modules corresponding to the vertices $i\in\{0,1,\ldots,n-1\}$.

(3) The socle element of the projective indecomposable module
corresponding to the vertex $0$ can also be expressed in terms of
powers of the loop $\epsilon$ (recall that $\epsilon^{2n}=0$
in the algebra $L_n^p$). In fact it is not hard to check that we have
$\epsilon^{2n-1} =
a_0a_{1}\dots a_{n-2}\ol a_{n-2} \ol a_{n-3}\dots \ol a_1\ol a_0\epsilon.
$
(Note that lower powers of $\epsilon$ are not necessarily occuring as paths
in the above list, but are linear combinations of these, the precise shape
depending on the deformation polynomial $p$.)
In particular, the socle elements are precisely the basis elements
having length $2n-1$.

(4) The above basis seems very suitable for making the following inductive
proof work. However, later in the paper we will also use slightly
different bases involving powers of the loop $\epsilon$.
}
\end{Remark}

\begin{proof}
The above discussion proves that the given elements form a generating set.
We need to show that these elements are linearly independent.
Since the defining relations of the algebra $L_n^p$
are relations between closed paths, we may suppose that
a linear combination of paths starting at $i$ and ending at $j$ is $0$.
By symmetry we may suppose that $i\geq j$ and hence we get a linear combination
\begin{eqnarray*} \label{lincomb}
0&=&\nu_0\cdot\ol a_{i-1}\ol a_{i-2}\dots \ol a_{j}+
\nu_1\cdot\ol a_{i-1}\ol a_{i-2}\dots \ol a_{0}\epsilon a_0 a_1\dots a_{j-1}+\\
&&+\sum_{\ell=i}^{n-2} \lambda_\ell\cdot a_ia_{i+1}\dots
a_\ell\ol a_\ell \ol a_{\ell-1}\dots \ol a_{i+1}\ol a_i\dots\ol a_j+\\
& & +\sum_{\ell=i}^{n-2}\mu_\ell\cdot a_ia_{i+1}\dots
a_\ell\ol a_\ell \ol a_{\ell-1}\dots \ol a_1\ol a_0\epsilon a_0a_1\dots a_{j-1}\\
\end{eqnarray*}
with scalars $\nu_0,\nu_1,\lambda_{\ell},\mu_{\ell}\in K$.
Note that the paths occurring have the following lengths:
length $i-j$ for the summand of type (3) with coefficient $\nu_0$,
length $i+j+1$ for the summand of type (5) with coefficient $\nu_1$,
length $2\ell -i-j+2$ for the summand of type (4) with coefficient $\lambda_{\ell}$,
and length $2\ell -i+j+3$ for the summand of type (7) with coefficient $\mu_{\ell}$.

Denote by $J=J(L_n^p)$ the two-sided ideal of $L_n^p$ generated by the arrows
of the quiver.

From the lengths of the paths we observe that all summands in the above expression
are contained in
$J^{i-j+1}$, except the one with coefficient $\nu_0$. So considering
the above equation modulo $J^{i-j+1}$ we can deduce that $\nu_0=0$.

The remaining summands are given by paths which pass through the vertex $i+1$
(recall that $\ell\ge i$),
except for the summand with coefficient $\nu_1$. So considering the above
expression modulo the two-sided ideal $L_n^pe_{i+1}L_n^p$ we get that
also $\nu_1=0$.

Hence we are left to consider the equation
$$0=\sum_{\ell=i}^{n-2} a_ia_{i+1}\dots  a_\ell\ol a_\ell \ol a_{\ell-1}\dots
\ol a_{i+1}\ol a_i\dots\ol a_j \left(\lambda_\ell+\mu_\ell
\ol a_{j-1}\ol a_{j-2}\dots\ol a_1\ol a_0\epsilon a_0a_1\dots a_{j-1}
\right).$$
We shall prove by induction on $\ell$ that all coefficients are $0$.
Observe that for each $\ell=i,\ldots,n-2$ the paths with coefficients
$\lambda_{\ell}$ and $\mu_{\ell}$ pass through the vertex $\ell+1$
but not through the vertex $\ell+2$.

For $\ell =i$ we consider the above equation modulo the two-sided ideal
$L_n^pe_{i+2}L_n^p$ and obtain that
$$0= \lambda_i\cdot a_i\ol a_{i}\ol a_{i-1}\dots\ol a_j
+ \mu_i\cdot
a_i\ol a_{i}\ol a_{i-1}\dots\ol a_j\dots\ol a_0\epsilon a_0a_1\dots a_{j-1}
$$
Since the first path is strictly shorter than the second we again
consider the equation modulo a suitable power of the ideal $J$
and can deduce that $\lambda_i=0$, and then also that $\mu_i=0$.

By a completely analogous argument we can immediately deduce inductively
that all coefficients $\lambda_{\ell}$ and $\mu_{\ell}$ are 0.
\end{proof}

\begin{Remark} \label{Rem-Cartan-dim}
{\em
From Proposition~\ref{abasis} one can derive
the Cartan matrix of the deformed preprojective algebras
of type $L_n$. Since the basis is independent of the
deformation polynomial $p$
the Cartan matrix of $L_n^0$ and $L_n^p$ coincide;
this has already been observed by
Bia\l kowski, Erdmann and Skowro\'nski
\cite[Lemma 3.2]{BES}.
The Cartan matrix $C_n$ of the deformed preprojective
algebras of type $L_n$ actually has the following form
$$C_n=2\cdot \left(
\begin{array}{ccccc}
n & n-1 & \ldots & 2 & 1 \\
n-1 & n-1 & \ldots & 2 & 1 \\
\vdots & \vdots & \ddots & \vdots & \vdots \\
2 & 2 & \ldots & 2 & 1 \\
1 & 1 & \ldots & 1 & 1
\end{array}
\right)
$$
From this shape one easily computes that the determinant
of the Cartan matrix is $\det C_n = 2^n$ for all $n\in\mathbb{N}$.
Moreover, the vector space dimension of $L_n^p$ is $\frac{1}{3}n(n+1)(2n+1)$
for all $n\in\mathbb{N}$.
}
\end{Remark}

%%%%%%%%%%%%%%%%%%%%%%%%%%%%%%%%%%%%%%%%%%%%%%%%%%%%%%%%%%%%%%%%%%%%%%%
\subsection{Deformed preprojective algebras of type $L$ are symmetric}
\label{Sec:symmetric}
The aim of this section is to show that for any deformation
polynomial $p\in K[X]$ and any $n\in\mathbb{N}$ the deformed
preprojective algebra $L_n^p$ is a symmetric algebra. This is a result of
Bia\l kowski, Erdmann and Skowro\'{n}ski \cite{BESIII}, as announced in
\cite{Bialkowski-ICRA-Abstract}.
Since this result is not yet available in the literature we
include an independent proof in this section for the convenience of the reader.

According to Lemma \ref{propertiesofNakayamatwistedcentre}
it suffices to show that the
identity is a Nakayama automorphism for the algebra $L_n^p$.
Recall from Proposition \ref{thenuformula} that a Nakayama automorphism $\nu$ for a
self-injective algebra $A$ over a field $K$ is characterized by the property
$\langle a,b\rangle = \langle b,\nu(a)\rangle$ for all $a,b,\in A$ where
$\langle.\,,.\rangle$ is a non-degenerate associative $K$-bilinear form on $A$.

The following general observation turns out to be useful when verifying
that a certain automorphism is indeed a Nakayama automorphism; namely,
it suffices to check the crucial property
on algebra generators of $A$.

\begin{Lemma} \label{Lemma-Nakayama-alggen}
Let $A$ be a self-injective algebra, with a non-degenerate associative
$K$-bilinear form $\langle.\,,.\rangle$. If an automorphism $\nu$ of $A$ satisfies
$\langle a_i,b\rangle = \langle b,\nu(a_i)\rangle$ for a
set of algebra generators $\{a_1,\ldots,a_r\}$ and all $b\in A$
then $\nu$ is a Nakayama automorphism of $A$.
\end{Lemma}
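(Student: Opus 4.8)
The plan is to reduce the defining property of a Nakayama automorphism, namely $\langle a,b\rangle = \langle b,\nu(a)\rangle$ for all $a,b\in A$, from arbitrary $a\in A$ to a set of algebra generators. First I would set
$$S := \{a\in A\;|\;\langle a,b\rangle = \langle b,\nu(a)\rangle\mbox{~for all~}b\in A\},$$
and the goal is to show that $S=A$ as soon as $S$ contains a generating set $\{a_1,\ldots,a_r\}$ together with the identity. Clearly $1\in S$ trivially (both sides equal $\psi(b)$ in the notation of Proposition~\ref{prop:form}, or more abstractly $\langle 1,b\rangle=\langle b,\nu(1)\rangle=\langle b,1\rangle$ since $\nu(1)=1$), and $S$ is visibly a $K$-subspace of $A$. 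So the heart of the matter is to show $S$ is closed under multiplication.

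The key computation is the following: suppose $a,a'\in S$; I claim $aa'\in S$. For any $b\in A$, using associativity of the bilinear form repeatedly, one has
$$\langle aa',b\rangle = \langle a,a'b\rangle = \langle a'b,\nu(a)\rangle = \langle a',b\nu(a)\rangle = \langle b\nu(a),\nu(a')\rangle = \langle b,\nu(a)\nu(a')\rangle = \langle b,\nu(aa')\rangle,$$
where the first and third equalities are associativity, the second and fourth use $a\in S$ and $a'\in S$ respectively (applied to the elements $a'b$ and $b\nu(a)$ of $A$), and the last uses that $\nu$ is an algebra homomorphism. Hence $aa'\in S$. Thus $S$ is a unital subalgebra of $A$ containing the generators $a_1,\ldots,a_r$, so $S=A$, which is exactly the property characterising $\nu$ as a Nakayama automorphism by Proposition~\ref{thenuformula}.

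I do not expect a serious obstacle here; the only point requiring a word of care is the direction in which one feeds elements into the hypothesis: to verify $\langle aa',b\rangle=\langle b,\nu(aa')\rangle$ one must apply the property of $a$ to the element $a'b$ (not to $b$) and the property of $a'$ to the element $b\nu(a)$, so it is genuinely used that $a_i\in S$ means the identity $\langle a_i,c\rangle=\langle c,\nu(a_i)\rangle$ holds for \emph{all} $c\in A$, which is precisely how the hypothesis of the lemma is phrased. One should also record explicitly that $\nu(1)=1$ (true since $\nu$ is an algebra automorphism) so that $1\in S$ and $S$ is unital. With these remarks in place the proof is a three-line chain of equalities followed by the observation that a subalgebra containing a generating set is everything.
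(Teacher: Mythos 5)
Your proof is correct and follows essentially the same route as the paper's: the identical six-step chain of equalities (associativity, the hypothesis applied to one factor, associativity, the hypothesis on the generator, associativity, multiplicativity of $\nu$) is the heart of both arguments. The only difference is packaging --- you phrase it as showing that the set $S$ of elements satisfying the identity is a unital subalgebra containing the generators, whereas the paper runs an induction on the length of a word in the generators; your formulation has the minor advantage of making explicit, via the subspace property of $S$, how linear combinations of products of generators are handled.
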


\begin{proof}
Every element of $A$ can be expressed as a product of the algebra generators.
We show that $\langle a,b\rangle = \langle b,\nu(a)\rangle$ for all
$a,b\in A$ by induction on the length of such an expression for $a$.
For any algebra generator $a_j$ and $a,b\in A$ we have
$$
\langle a a_j,b\rangle =  \langle a,a_jb\rangle = \langle a_jb,\nu(a)\rangle
= \langle a_j,b\nu(a)\rangle
 =  \langle b\nu(a), \nu(a_j)\rangle = \langle b,\nu(a)\nu(a_j)\rangle =
\langle b, \nu(aa_j)\rangle
$$
where for the first, third and fifth equality we used the associativity
of the form, for the second we used the induction hypotheses,
for the fourth equality we used the assumption on $\nu$
for algebra generators, and the last equality holds because $\nu$ is
an algebra homomorphism.
\end{proof}

Recall from Proposition \ref{prop:form} the construction of an
associative non-degenerate bilinear form on a self-injective
algebra, depending on the choice of a suitable basis. For a basis
$\mathcal{B}$ consisting of non-zero distinct paths and containing
a basis of the socle this bilinear form has been defined on basis
elements by $\langle a,b\rangle = \psi(ab)$ where
$\psi(x)= \left\{
\begin{array}{ll} 1 & \mbox{if~}x\in\soc(A)\cap\mathcal{B} \\
0 & \mbox{if~}x\not\in\soc(A)\cap\mathcal{B}
\end{array}
\right. .
$

For our aim of proving that the identity is a Nakayama automorphism
for $L_n^p$ we shall show that the bilinear form $\langle .,.\rangle$
corresponding to the basis $\mathcal{B}$
given in Proposition \ref{abasis} is indeed symmetric.
By the previous lemma we therefore have to verify that
$\langle a,b\rangle = \langle b,a\rangle$
for every algebra generator
$a\in \{e_0,\ldots,e_{n-1},\epsilon, a_0,\ldots,a_{n-2},
\ol a_0,\ldots,\ol a_{n-2}\}$,
and $b$ running through the basis of Proposition \ref{abasis}.

It is immediate from the definition of the form $\langle.,.\rangle$
that $\langle e_i,b\rangle = \langle b,e_i\rangle$; in fact, the value
on either side is 1 precisely if $b$ is a basis element from the socle,
and 0 otherwise.

So it remains to deal with the cases where $a$ is an arrow of the quiver
of $L_n^p$.

We start with the loop $\epsilon$.
By definition the value in both $\langle \epsilon,b\rangle$ and
$\langle b,\epsilon\rangle$ is 0 unless $b\in e_0 L_n^p e_0$.
By Proposition \ref{abasis}, for the latter space a basis is
given by the elements $e_0$, $\epsilon$,
$a_0\ldots a_{\ell} \ol a_{\ell}\ldots \ol a_0$ and
$a_0\ldots a_{\ell} \ol a_{\ell}\ldots \ol a_0\epsilon$ where
$0\le \ell\le n-2$. Using the defining relations
$a_s\ol a_s + \ol a_{s-1} a_{s-1} =0$
it is not difficult to see that in $L_n^p$ we have
$a_0\ldots a_{\ell} \ol a_{\ell}\ldots \ol a_0 = \pm (a_0\ol a_0)^{\ell +1}$.
From this we can deduce, by using the relation
$\epsilon^2+\epsilon^3p(\epsilon)+ a_0\ol a_0=0$,
that $\epsilon$ commutes with every element of $e_0L_n^pe_0$.
But then we clearly have for all $b\in e_0L_n^p e_0$ that
$$\langle \epsilon,b\rangle = \psi(\epsilon b) = \psi(b\epsilon)
= \langle b,\epsilon\rangle.
$$
We now consider the case $a=a_r$ (for some $0\le r\le n-2$).
Again by definition the value in both $\langle a_r,b\rangle$ and
$\langle b,a_r\rangle$ is 0 unless $b\in e_{r+1} L_n^p e_r$.
Moreover, for a basis element $b\in e_{r+1} L_n^p e_r$
the value in both $\langle a_r,b\rangle$ and
$\langle b,a_r\rangle$ is also 0 unless $a_rb$ (resp. $ba_r$)
is a nonzero element in the socle of $L_n^p$.
According to Remark \ref{Rem-abasis}\,(2) we know that $a_rb$ and $ba_r$
can only be a nonzero element in the socle if $b$ is a path of length $2n-2$.
However, it is immediately checked that the only basis element
$b\in e_{r+1}L_n^pe_r$ in Proposition \ref{abasis} of length
$2n-2$ is
$$b=a_{r+1}a_{r+2}\ldots a_{n-2}\ol a_{n-2} \ol a_{n-3}\ldots \ol a_0
\epsilon a_0a_1\ldots a_{r-1}.
$$
For this element we have that $a_rb$ is the socle element in $\mathcal{B}$
(of type (7))
corresponding to vertex $r$ and that $ba_r$ is the socle element
corresponding to vertex $r+1$ (of type (7) if $r<n-2$ and of
type (5) if $r=n-2$), i.e. we can deduce
$$\langle a_r,b\rangle = \psi(a_rb) = 1 = \psi(ba_r) =
\langle b,a_r\rangle.
$$
Since by the above remarks in all other cases for $b$
both values $\langle a_r,b\rangle$ and
$\langle b,a_r\rangle$ vanish we get the desired
statement $\langle a_r,b\rangle = \langle b,a_r\rangle$
for all basis elements $b$ from Proposition \ref{abasis}.
\smallskip

Finally we consider the case where $a=\ol a_r$ for some $0\le r\le n-2$.
This is mainly analogous to the previous case but at a certain
point pointed out below one has to be careful.
Again by definition the value in both $\langle \ol a_r,b\rangle$ and
$\langle b,\ol a_r\rangle$ is 0 unless $b\in e_{r} L_n^p e_{r+1}$
and $\ol a_rb$ and $b\ol a_r$
are nonzero elements in the socle.
By Remark \ref{Rem-abasis}\,(2) the products $\ol a_rb$ and $b\ol a_r$
can only be nonzero elements in the socle if $b$ is a path of length $2n-2$.
The only basis element
$b\in e_{r}L_n^pe_{r+1}$ of length $2n-2$ in Proposition \ref{abasis}
occurs in type (6) (for $r<n-2$) and in type (5)
(for $r=n-2$) and has the form
$$b=\ol a_{r-1}\ol a_{r-2}\ldots \ol a_{0}\epsilon a_{0} \ldots a_{n-2}
\ol a_{n-2}\ldots \ol a_{r+1}.
$$
Now, when calculating the values of $\langle \ol a_r,b\rangle$ and
$\langle b,\ol a_r\rangle$ one has to be careful since the products
$$\ol a_rb = \ol a_r\ol a_{r-1}\ol a_{r-2}\ldots \ol a_{0}\epsilon a_{0}
\ldots a_{n-2}\ol a_{n-2}\ldots \ol a_{r+1}
\in e_{r+1}L_n^p e_{r+1}
$$
and
$$b\ol a_r = \ol a_{r-1}\ol a_{r-2}\ldots \ol a_{0}\epsilon a_{0}
\ldots a_{n-2}\ol a_{n-2}\ldots \ol a_{r+1}\ol a_r
\in e_{r}L_n^p e_{r}
$$
are not elements of the basis $\mathcal{B}$ given in Proposition
\ref{abasis}; the only exception is for $r=n-2$ where $\ol a_{n-2}b$
is a basis element of type (5). However, Lemma \ref{nichtumkehren}\,(1) below
shows how to express these in terms of the basis $\mathcal{B}$; namely
for all $r\in \{0,\ldots,n-2\}$ we have that $\ol a_r b$ and
$b\ol a_r$ are equal (not only up to a scalar\,!) to the socle elements
occurring in the basis $\mathcal{B}$. Hence we obtain that
$\langle \ol a_r,b\rangle = \psi(\ol a_rb) = 1
= \psi(b\ol a_r) = \langle b,\ol a_r\rangle$,
as desired.
\smallskip

Summarizing our above arguments we have now shown that
the associative non-degenerate bilinear form $\langle.,.\rangle$
corresponding (in the sense of \ref{prop:form}) to the
basis $\mathcal{B}$ of Proposition \ref{abasis}
is symmetric. We therefore have given an independent proof
of the following result, which is due to
Bia\l kowski, Erdmann and Skowro\'{n}ski \cite{BESIII}.

\begin{Theorem} \label{thm-symmetric}
Let $K$ be a field (of any characteristic).
For all $n\in \mathbb{N}$ and every polynomial $p\in K[X]$ the
deformed preprojective algebra $L_n^p$ is a symmetric algebra.
\end{Theorem}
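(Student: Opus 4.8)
The plan is to apply the criterion recorded in Lemma~\ref{propertiesofNakayamatwistedcentre}: the self-injective algebra $L_n^p$ is symmetric precisely when its Nakayama automorphism is inner, and by Proposition~\ref{thenuformula} it is enough to exhibit one associative non-degenerate $K$-bilinear form $\langle.\,,.\rangle$ on $L_n^p$ which is \emph{symmetric}, since then $\langle a,b\rangle=\langle b,a\rangle=\langle b,\nu(a)\rangle$ shows that the identity is a Nakayama automorphism. For $\langle.\,,.\rangle$ I would take the form $\langle x,y\rangle=\psi(xy)$ associated by Proposition~\ref{prop:form} to the explicit path basis $\mathcal B$ of Proposition~\ref{abasis}; because $\mathcal B$ contains a basis of the socle, this form is automatically associative and non-degenerate, so everything reduces to checking its symmetry.

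To turn this into a finite check I would invoke Lemma~\ref{Lemma-Nakayama-alggen}: it suffices to verify $\langle a,b\rangle=\langle b,a\rangle$ for $a$ running through the algebra generators $e_0,\dots,e_{n-1},\epsilon,a_0,\dots,a_{n-2},\ol a_0,\dots,\ol a_{n-2}$ and, by bilinearity, $b$ running through the basis $\mathcal B$, in order to conclude that the identity is a Nakayama automorphism. The idempotent cases are immediate, both sides being $1$ exactly when $b$ lies in the socle. For $a=\epsilon$ the pairings $\langle\epsilon,b\rangle$ and $\langle b,\epsilon\rangle$ vanish unless $b\in e_0L_n^pe_0$, and on that corner one shows $\epsilon$ is central: the relations $a_s\ol a_s+\ol a_{s-1}a_{s-1}=0$ give $a_0\cdots a_\ell\ol a_\ell\cdots\ol a_0=\pm(a_0\ol a_0)^{\ell+1}$, and the deformed relation $\epsilon^2+\epsilon^3p(\epsilon)+a_0\ol a_0=0$ then exhibits $a_0\ol a_0$, hence all of $e_0L_n^pe_0$, as a polynomial in $\epsilon$; so $\psi(\epsilon b)=\psi(b\epsilon)$.

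For an arrow $a=a_r$ both sides vanish unless $b\in e_{r+1}L_n^pe_r$ and the product lies in the socle, which by Remark~\ref{Rem-abasis}\,(2) forces $b$ to have length $2n-2$; inspection of Proposition~\ref{abasis} shows there is exactly one such $b$, and for it $a_rb$ and $ba_r$ are the socle basis elements at vertices $r$ and $r+1$, so both pairings equal $1$. I expect the genuinely delicate step to be the parallel case $a=\ol a_r$: again there is a unique length-$2n-2$ candidate $b\in e_rL_n^pe_{r+1}$, but now the products $\ol a_rb$ and $b\ol a_r$ are \emph{not} themselves members of $\mathcal B$, so $\psi$ cannot be read off directly. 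The remedy I would use is an auxiliary computation — packaged as Lemma~\ref{nichtumkehren} — showing that after using the relations to move the barred arrows past the unbarred ones, $\ol a_rb$ and $b\ol a_r$ are \emph{equal} (not merely up to a sign) to the socle basis elements of $\mathcal B$ at vertices $r+1$ and $r$; granting this, both pairings are $1$. Combining the four cases, the form $\langle.\,,.\rangle$ is symmetric, so the identity is a Nakayama automorphism and $L_n^p$ is symmetric by Lemma~\ref{propertiesofNakayamatwistedcentre}.
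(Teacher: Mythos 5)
Your proposal is correct and follows essentially the same route as the paper: reducing symmetry of the form from Proposition~\ref{prop:form} (built on the basis of Proposition~\ref{abasis}) to a check on algebra generators via Lemma~\ref{Lemma-Nakayama-alggen}, treating the cases $e_i$, $\epsilon$, $a_r$, $\ol a_r$ exactly as the paper does, and correctly isolating the $\ol a_r$ case as the one requiring the sign-tracking computation of Lemma~\ref{nichtumkehren}\,(a).
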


We complete this section by providing an auxiliary result on relations in the
algebras $L_n^p$; the first part provides the missing calculations
in the last part of the above proof, the second part will be used in later sections.

\begin{Lemma} \label{nichtumkehren}
The following identities hold in the algebra $L_n^p$.
\begin{enumerate}
\item[{(a)}] For all $r\in\{0,\ldots,n-2\}$ we have that
$$\ol a_{r-1}\ol a_{r-2}\ldots \ol a_{0}\epsilon a_{0}
\ldots a_{n-2}\ol a_{n-2}\ldots \ol a_{r+1}\ol a_r
=
a_ra_{r+1}\dots a_{n-2}\ol a_{n-2} \ol a_{n-3}\dots
\ol a_0\epsilon a_0a_1\dots a_{r-1}
$$
\item[{(b)}] For all $l\in\{0,1,\dots,n-2\}$ we have that
$\ol a_{n-2}\ol a_{n-3}\dots\ol a_l a_l=0$.
\end{enumerate}
\end{Lemma}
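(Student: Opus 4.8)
The plan is to prove both identities by the same underlying mechanism: repeatedly applying the commutation relation $a_s\ol a_s = -\ol a_{s-1}a_{s-1}$ (for $1\le s\le n-2$) together with the terminal relation $\ol a_{n-2}a_{n-2}=0$ and the vertex-$0$ relation $a_0\ol a_0 = -(\epsilon^2+\epsilon^3p(\epsilon))$, which in particular says $a_0\ol a_0$ commutes with $\epsilon$. For part (b) I would argue by descending induction on $l$: the base case $l=n-2$ is exactly the defining relation $\ol a_{n-2}a_{n-2}=0$. For the inductive step, suppose $\ol a_{n-2}\ol a_{n-3}\dots\ol a_{l+1}a_{l+1}=0$; then using $a_l\ol a_l = -\ol a_{l-1}a_{l-1}$ one can insert a factor: starting from $\ol a_{n-2}\dots\ol a_l a_l$, I would move an $a$ past the $\ol a_l$ via the relation $\ol a_l a_l \cdot(\text{something})$—more carefully, I would rewrite $\ol a_{n-2}\dots\ol a_{l}a_l$ by noting $\ol a_{n-2}\dots\ol a_{l+1}\cdot(\ol a_l a_l)$ and using that the relation at vertex $l+1$ gives $\ol a_l a_l = -a_{l+1}\ol a_{l+1}$, so $\ol a_{n-2}\dots\ol a_{l}a_l = -\ol a_{n-2}\dots\ol a_{l+1}a_{l+1}\ol a_{l+1} = 0$ by the induction hypothesis. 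This is the clean way to do it and needs only careful bookkeeping of which relation applies at which vertex.

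For part (a), the strategy is to transform the left-hand side into the right-hand side by the same kind of "straightening" moves used in the discussion preceding Proposition \ref{abasis}. Concretely, I would first use the relations $a_s\ol a_s + \ol a_{s-1}a_{s-1}=0$ to move the block $\ol a_{n-2}\ol a_{n-3}\dots\ol a_{r+1}$ that sits to the right of $a_0\dots a_{n-2}$ leftward, collapsing $a_j\ol a_j$ pairs; this is exactly the computation $a_0\dots a_\ell\ol a_\ell\dots\ol a_j = \pm (a_0\ol a_0)^{\ell-j+1}a_0\dots a_{j-1}$ type identity already recorded in the text. Then I would use that $a_0\ol a_0 = -(\epsilon^2+\epsilon^3p(\epsilon))$ commutes with $\epsilon$ to slide the resulting power of $a_0\ol a_0$ across $\epsilon$, and finally re-expand $a_0\ol a_0$ back into a path of the form $a_0\dots a_k\ol a_k\dots\ol a_0$ and move the trailing $\ol a$'s leftward again to land on the right-hand side $a_r a_{r+1}\dots a_{n-2}\ol a_{n-2}\dots\ol a_0\epsilon a_0\dots a_{r-1}$. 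Both sides are socle basis elements of the projective corresponding to vertex $r$ (by Remark \ref{Rem-abasis}\,(2),(3)), so tracking signs is the only delicate point.

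The main obstacle I anticipate is the sign bookkeeping in part (a): each application of $a_s\ol a_s = -\ol a_{s-1}a_{s-1}$ contributes a sign, and one must check that the total number of transpositions on the two sides matches so that the identity holds on the nose (the lemma explicitly claims equality, not merely up to scalar). A safe way to handle this is to observe that both sides lie in the one-dimensional socle $\soc(e_r L_n^p e_r)$, so they differ by a scalar $c\in K$, and then to pin down $c=1$ by a single clean evaluation—for instance by multiplying through on the left by $a_r^{-1}$-like reductions, or better, by comparing both sides after applying the linear form $\psi$ from Proposition \ref{prop:form} against a suitable element, or simply by carrying out the reduction symmetrically from both ends so the sign is manifestly the same. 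An alternative that sidesteps signs entirely: show that left-multiplication by $a_r$ sends the left-hand side to the socle element of type (7) at vertex $r$ and right-multiplication by $a_{r-1}$ (for $r\ge 1$; handle $r=0$ separately) behaves compatibly, forcing the scalar to be $1$; but I expect the direct symmetric reduction to be the shortest route.
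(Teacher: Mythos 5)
Your plan coincides with the paper's proof: part (b) is done by exactly the same descending induction using $\ol a_l a_l=-a_{l+1}\ol a_{l+1}$ and the relation $\ol a_{n-2}a_{n-2}=0$, and part (a) is done by the same straightening moves (collapse the trailing $\ol a$'s into $(a_0\ol a_0)^{n-r-1}$, commute past $\epsilon$, re-expand by moving the $a$'s leftward), with the paper verifying that the two batches of moves each contribute the sign $(-1)^{c}$ for $c=(n-2)+\dots+r$, so they cancel — precisely the symmetric cancellation you anticipated. Your fallback via the one-dimensional socle is also sound but unnecessary.
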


\begin{proof}
(a) We shall use frequently the relations $a_s\ol a_s +\ol a_{s-1}a_{s-1} =0$
for $s\in\{1,\ldots,n-2\}$ and carefully keep track of the signs
occurring.

In the expression on the left hand side of assertion (a) we start by successively
moving $\ol a_{n-2},\ldots,\ol a_r$ to the left (but still right of $\epsilon$);
note that each such move the
gives a minus sign. Setting $c:=(n-2)+(n-3)+\ldots+(r+1)+r$ for abbreviation
we obtain that
\begin{equation} \label{eqn1}
\ol a_{r-1}\ol a_{r-2}\ldots \ol a_{0}\epsilon a_{0}
\ldots a_{n-2}\ol a_{n-2}\ldots \ol a_{r+1}\ol a_r
=
(-1)^{c}\,
\ol a_{r-1}\ol a_{r-2}\ldots \ol a_{0}\epsilon
(a_0\ol a_0)^{n-r-1} a_0a_1\ldots a_{r-1}.
\end{equation}
It follows directly from the defining relation
$\epsilon^2+\epsilon^3p(\epsilon)+a_0\ol a_0=0$
that $\epsilon$ commutes with $a_0\ol a_0$, so the
expression on the right hand side of (\ref{eqn1}) is equal to
\begin{equation} \label{eqn2}
(-1)^{c}\,
\ol a_{r-1}\ol a_{r-2}\ldots \ol a_{0}
(a_0\ol a_0)^{n-r-1} \epsilon a_0a_1\ldots a_{r-1}.
\end{equation}
The part to the right of $\epsilon$ already has the desired shape.
To the left of $\epsilon$ we now successively move the $a_0$'s to the
left; for the first $a_0$ we need $r$ such moves and obtain
that the expression in (\ref{eqn2}) equals
\begin{equation} \label{eqn3}
(-1)^{c}(-1)^r a_r \ol a_{r}\ol a_{r-1}\ldots \ol a_1 \ol a_0
(a_0\ol a_0)^{n-r-2} \epsilon a_0a_1\ldots a_{r-1}.
\end{equation}
For moving the next $a_0$ we need $r+1$ moves etc and eventually
get another sign of $(-1)^c$; more precisely the expression in
(\ref{eqn3}) is equal to
\begin{equation}
(-1)^{c}(-1)^{c} a_ra_{r+1}\dots a_{n-2}\ol a_{n-2} \ol a_{n-3}\dots
\ol a_0\epsilon a_0a_1\dots a_{r-1}
\end{equation}
where the signs cancel so that this
is precisely the right hand side in the assertion of part (a)
of the lemma.
\smallskip

(b) We show this by reverse induction on $l$. For $l=n-2$ this is just the
relation $\ol a_{n-2}a_{n-2}=0$. For $l<n-2$ we use the defining relation
$\ol a_l a_l=a_{l+1} \ol a_{l+1}$ and obtain
$$\ol a_{n-2}\ol a_{n-3}\dots\ol a_{l+1}\ol a_l a_l=
\ol a_{n-2}\ol a_{n-3}\dots\ol a_{l+1}a_{l+1} \ol a_{l+1}$$
where the latter is zero by induction hypothesis.
\end{proof}

%%%%%%%%%%%%%%%%%%%%%%%%%%%%%%%%%%%%%%%%%%%%%%%%%%%%%%%%%%%%%%%%%%%%%%%%%%
\subsection{Linking $L_{n+1}^p$ and $L_n^p$}

For proving statements about the algebras $L_n^p$
we shall often argue by induction and then the following result
will turn out to be useful. As usual we denote the trivial path
of length zero corresponding to the vertex $i$
by $e_i$.

\begin{Lemma}\label{induction} For any $n\ge 1$,
there is an algebra epimorphism
$\pi_n:L_{n+1}^p\lra L_{n}^p$
satisfying
$$\pi_n(e_i)=e_i\mbox{~for all~ $0\le i\le n-1$}~~,~~
\pi_n(e_n)=0, ~\pi_n(\epsilon)=\epsilon,
$$
$$\;\;\pi_n(a_i)=a_i,\;\pi_n(\ol a_i)=\ol a_i\;
\mbox{~for all~ $0\le i\le n-2$~},\;\;\pi_n(a_{n-1})=0,\;\;\pi_n(\ol a_{n-1})=0.$$
Moreover, $\pi_n$ induces an algebra isomorphism
$$L_{n+1}^p/(L_{n+1}^pe_nL_{n+1}^p)\simeq L_n^p\;.$$
\end{Lemma}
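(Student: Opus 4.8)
The plan is to construct the map $\pi_n$ directly using the presentation by quiver and relations, then check it is a well-defined algebra epimorphism, and finally identify its kernel. First I would define $\pi_n$ on the free path algebra $K\overline{Q}_{n+1}$ of the quiver of $L_{n+1}^p$ by the assignment given in the statement (sending the vertex $e_n$, the arrows $a_{n-1}$, $\overline a_{n-1}$ to zero and fixing everything else). Since the target $L_n^p$ contains idempotents and arrows with exactly the same labels for the indices $0,\dots,n-1$ (and the loop $\epsilon$), this is well defined on the path algebra. To see it descends to $L_{n+1}^p$ I would check that the defining relations of $L_{n+1}^p$ are sent into the relations of $L_n^p$: the relations $a_s\overline a_s + \overline a_{s-1}a_{s-1}=0$ for $s\in\{1,\dots,n-2\}$ are preserved verbatim; the relation $a_{n-1}\overline a_{n-1} + \overline a_{n-2}a_{n-2}=0$ maps to $\overline a_{n-2}a_{n-2}=0$, which is exactly the corresponding relation in $L_n^p$; the relation $\overline a_{n-1}a_{n-1}=0$ maps to $0=0$; and the relations $\epsilon^{2(n+1)}=0$ and $\epsilon^2+a_0\overline a_0+\epsilon^3p(\epsilon)=0$ map to $\epsilon^{2(n+1)}=0$ (which holds in $L_n^p$ since $\epsilon^{2n}=0$ there, and $2(n+1)>2n$) and to the corresponding relation $\epsilon^2+a_0\overline a_0+\epsilon^3p(\epsilon)=0$. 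Surjectivity is then clear, since all generators $e_i$ ($0\le i\le n-1$), $\epsilon$, $a_i$, $\overline a_i$ ($0\le i\le n-2$) of $L_n^p$ are in the image.

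For the second statement I would first observe that $e_n\in\ker\pi_n$, hence the two-sided ideal $L_{n+1}^pe_nL_{n+1}^p$ is contained in $\ker\pi_n$, so $\pi_n$ induces a surjective algebra homomorphism
$$\overline{\pi}_n:L_{n+1}^p/(L_{n+1}^pe_nL_{n+1}^p)\lra L_n^p.$$
To prove it is an isomorphism it then suffices to compare dimensions (both sides are finite-dimensional): by Remark~\ref{Rem-Cartan-dim} we have $\dim_K L_n^p=\tfrac13 n(n+1)(2n+1)$, so I need $\dim_K L_{n+1}^p/(L_{n+1}^pe_nL_{n+1}^p)=\tfrac13 n(n+1)(2n+1)$ as well, i.e. that the ideal $L_{n+1}^pe_nL_{n+1}^p$ has dimension $\dim_K L_{n+1}^p - \dim_K L_n^p = \tfrac13(n+1)(n+2)(2n+3)-\tfrac13 n(n+1)(2n+1)$. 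The cleanest way to get this is to use the explicit $K$-basis of $L_{n+1}^p$ from Proposition~\ref{abasis}: the ideal $L_{n+1}^pe_nL_{n+1}^p$ is spanned by those basis paths which pass through the vertex $n$, and I would count exactly which of the seven families of basis paths this includes. A path passes through vertex $n$ precisely when it involves at least one of the arrows $a_{n-1}$ or $\overline a_{n-1}$; inspecting the seven types, these are: type (1) with $j=n$; type (2) with $\ell=n-1$; type (3) with $i=n$; type (4) with $\ell=n-1$; type (5) with $\max(i,j)=n$ or, more precisely, $i=n$ or $j=n$; type (6) with $\ell=n-1$ or $j=n$; type (7) with $\ell=n-1$. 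Conversely, the remaining basis paths of $L_{n+1}^p$ — those avoiding vertex $n$ — are in natural bijection with the basis of $L_n^p$ from Proposition~\ref{abasis}, and $\overline{\pi}_n$ maps each such path to the corresponding basis element of $L_n^p$ (and kills the rest). This simultaneously gives the dimension count and shows $\overline{\pi}_n$ sends a spanning set bijectively onto a basis, hence is an isomorphism.

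The main obstacle I expect is the bookkeeping in the last step: making sure the list of basis paths of $L_{n+1}^p$ which avoid the vertex $n$ really is exactly the image under the inclusion-of-quivers of the basis of $L_n^p$, and that no path of $L_{n+1}^p$ avoiding vertex $n$ becomes a nonzero element requiring re-expression. Here one should be slightly careful with the relation $\overline a_{n-2}a_{n-2}=0$, which holds in $L_n^p$ but not in $L_{n+1}^p$ (where instead $\overline a_{n-2}a_{n-2}=-a_{n-1}\overline a_{n-1}$): a path in $L_{n+1}^p$ that contains the subword $\overline a_{n-2}a_{n-2}$ necessarily passes through vertex $n$ after applying this relation, so it does lie in $L_{n+1}^pe_nL_{n+1}^p$ and is correctly killed by $\overline{\pi}_n$; meanwhile the basis elements of type (2),(4),(6),(7) with $\ell\le n-2$ and of types (1),(3),(5) with indices $\le n-1$ do not contain $a_{n-1}$ or $\overline a_{n-1}$ and map to the analogous basis elements of $L_n^p$. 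Once this matching is verified, both the well-definedness and the dimension equality fall out, and the isomorphism follows. Alternatively, if one prefers to avoid the explicit count, one could construct a section $L_n^p\to L_{n+1}^p/(L_{n+1}^pe_nL_{n+1}^p)$ on generators and check it is inverse to $\overline{\pi}_n$, but verifying that the section respects the relation $\overline a_{n-2}a_{n-2}=0$ of $L_n^p$ again comes down to the observation that $a_{n-1}\overline a_{n-1}\in L_{n+1}^pe_nL_{n+1}^p$.
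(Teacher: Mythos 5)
Your proposal is correct and follows essentially the same route as the paper: check well-definedness on the defining relations (with the same key observation that $a_{n-1}\ol a_{n-1}+\ol a_{n-2}a_{n-2}$ maps to $\ol a_{n-2}a_{n-2}$, which is a relation of $L_n^p$), then identify the kernel with $L_{n+1}^pe_nL_{n+1}^p$ by comparing $\dim\ker\pi_n=\dim_K L_{n+1}^p-\dim_K L_n^p=2(n+1)^2$ with the count of basis paths from Proposition~\ref{abasis} passing through vertex $n$. Your tallies of which basis paths pass through vertex $n$ agree with the paper's, and the sandwich (span of those paths) $\subseteq L_{n+1}^pe_nL_{n+1}^p\subseteq\ker\pi_n$ together with the dimension count closes the argument exactly as in the paper.
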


\begin{proof}
%Let $\pi_n: L_{n+1}^p\lra L_{n}^p$ be the mapping induced by the above
%identifications.
The map $\pi_n$ is well-defined
since the defining relations
for the algebra $L_{n+1}^p$ are clearly verified in $L_n^p$
(perhaps the only not entirely obvious check is that
$\pi_n(a_{n-1}\overline{a}_{n-1} + \overline{a}_{n-2}a_{n-2})
= \pi_n(a_{n-1})\pi_n(\overline{a}_{n-1}) + \pi_n(\overline{a}_{n-2})\pi_n(a_{n-2})
= 0 + \overline{a}_{n-2}a_{n-2}$
which is zero in $L_n^p$).

For the second statement
we need to determine the kernel of $\pi_n$ (since $\pi_n$ is surjective by definition).
By definition of $\pi_n$ we have that $L_{n+1}^pe_nL_{n+1}^p$ is contained
in the kernel. On the other hand, the dimension of the kernel is the
difference of the dimensions of the algebras
$L_{n+1}^p$ and $L_n^p$. These are given in Remark \ref{Rem-Cartan-dim} and we
obtain
$$\dim\ker\pi_n = \frac{1}{3}(n+1)(n+2)(2n+3) - \frac{1}{3}n(n+1)(2n+1)
= 2(n+1)^2.
$$
However, in the basis of $L_{n+1}^p$ provided in Proposition~\ref{abasis},
there are already $2(n+1)^2$ basis elements which pass through the vertex $n$,
i.e. are contained in $L_{n+1}^p e_n L_{n+1}^p$. (More precisely, there are
$n$ such paths of type (1), $\frac{n(n-1)}{2}$ of type (2),
$n+1$ of type (3), $\frac{n(n+1)}{2}$ of type (4), $2n+1$ of type (5),
$\frac{n(n-1)}{2}$ of type (6) and $\frac{n(n+1)}{2}$ of type (7), respectively.)

Since $L_{n+1}^pe_nL_{n+1}^p\subseteq \ker\pi_n$ and dimensions agree,
the second claim of the lemma follows.
\end{proof}

\begin{Remark}
{\em
Since $\epsilon^{2n}=0$ in $L_n^p$, also $\epsilon^{2n}$ is in the
kernel of $\pi_n$. The dimension arguments in the above proof thus
show that $\epsilon^{2n}$ is contained in $L_{n+1}^pe_nL_{n+1}^p$,
i.e., $\epsilon^{2n}$ is a linear combination of paths passing through
the vertex $n$ (which could also be checked directly).
}
\end{Remark}

%%%%%%%%%%%%%%%%%%%%%%%%%%%%%%%%%%%%%%%%%%%%%%%%%%%%%%%%%%%%%%%%%%%%%
\subsection{Generating the commutator subspace}
\label{Sec:commutator}
We start with some computations on the basis elements occurring in
Proposition~\ref{abasis}.

The basis in Lemma~\ref{abasis} of $L_n^p$ is actually a union
of bases of $e_iL_n^pe_j$ for $i,j\in\{0,1,\dots,n-1\}$. We
consider the case $i=j$ in Lemma~\ref{abasis}. Only the basis elements of type
(3), (4), (5) and (7) admit $i=j$.
Up to signs (which are not essential since we are only interested
in generating sets) we have
\begin{eqnarray*}
(a_i\ol a_i)^2&=&\pm\,a_ia_{i+1}\ol a_{i+1}\ol a_i\\
(a_i\ol a_i)^3&=& \pm\,a_ia_{i+1}a_{i+2}\ol a_{i+2}\ol a_{i+1}\ol a_i\\
\dots&&\dots\\
(a_i\ol a_i)^{\ell+1}&=& \pm\,
a_ia_{i+1}\dots a_{i+\ell}\ol a_{i+\ell}\dots\ol a_{i+1}\ol a_i\\
(a_i\ol a_i)^{\ell+2}&=& \pm\, (a_i\ol a_i)
(a_ia_{i+1}\dots a_{i+\ell}\ol a_{i+\ell}\dots\ol a_{i+1}\ol a_i)\\
&=& \pm\,
a_ia_{i+1}\dots a_{i+\ell}a_{i+\ell+1}\ol a_{i+\ell+1}\ol a_{i+\ell}\dots
\ol a_{i+1}\ol a_i
\end{eqnarray*}
for all $\ell$.
Hence the basis elements of type (4) for $i=j$ can be expressed as
$\pm(a_i\ol a_i)^m$ for certain $m$. In particular, for $i=0$ one gets
$$(a_0\ol a_0)^m=\pm\,\left(\epsilon^{2m}(1+\epsilon p(\epsilon))^m\right).$$

Moreover, we see that
\begin{eqnarray*}
[a_i,a_{i+1}\dots
\lefteqn{a_\ell\ol a_\ell \ol a_{\ell-1}\dots \ol a_1\ol a_0\epsilon a_0a_1\dots a_{i-1}]=}\\
&=&a_ia_{i+1}\dots a_\ell\ol a_\ell \ol a_{\ell-1}\dots \ol a_1\ol a_0\epsilon a_0a_1\dots a_{i-1}
-a_{i+1}\dots a_\ell\ol a_\ell \ol a_{\ell-1}\dots \ol a_1\ol a_0\epsilon a_0a_1\dots a_{i-1}a_i
\end{eqnarray*}
$\mbox{for some~~}i\le \ell\le n-2$.
Hence, two different basis elements of type (7) for $i=j$ differ by a commutator.
Therefore, modulo commutator $[L_n^p,L_n^p]$ we need to consider the
basis elements of type (7) only for $i=j=0$:
$$
a_0a_{1}\dots a_{\ell-1}a_\ell\ol a_\ell \ol a_{\ell-1}\dots \ol a_1\ol a_0\epsilon
 \mbox{~~for some~~}0\le \ell\le n-2
$$
But now,
$$
a_0a_{1}\dots a_{\ell-1}a_\ell\ol a_\ell \ol a_{\ell-1}\dots \ol a_1\ol a_0\epsilon =\pm
(a_0\ol a_0)^{\ell+1}\epsilon=\pm\epsilon(\epsilon^2+\epsilon^3p(\epsilon))^{\ell+1}=
\epsilon^{2\ell+3}(1+\epsilon p(\epsilon))^{\ell+1}
$$
Therefore, the basis of $e_0L_n^pe_0$ in Proposition~\ref{abasis}
consists of one element of type (3), one element of type (5), elements of type (7) which have the form $\epsilon^{2\ell+3}(1+\epsilon p(\epsilon))^{\ell+1}$
and elements of type (4) of the form $\epsilon^{2\ell+2}(1+\epsilon p(\epsilon))^{\ell+1}$. Hence the set
$$\{e_0\}\cup\{\epsilon\}\cup \{\epsilon^{2\ell+3}(1+\epsilon p(\epsilon))^{\ell+1}\;|\;0\leq\ell<n-1\}\cup
\{\epsilon^{2\ell+2}(1+\epsilon p(\epsilon))^{\ell+1}\;|\;0\leq\ell<n-1\}$$
forms a basis of $e_0L_n^pe_0$.

\begin{Lemma}\label{epsilonpower}
The set $\{\epsilon^\ell\;|\;0\le\ell<2n\}$ is a $K$-basis of $e_0L_n^pe_0$.
\end{Lemma}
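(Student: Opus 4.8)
The plan is to deduce this from the basis of $e_0L_n^pe_0$ established in the preceding paragraph. We already know that
$$\{e_0\}\cup\{\epsilon\}\cup \{\epsilon^{2\ell+3}(1+\epsilon p(\epsilon))^{\ell+1}\;|\;0\leq\ell<n-1\}\cup
\{\epsilon^{2\ell+2}(1+\epsilon p(\epsilon))^{\ell+1}\;|\;0\leq\ell<n-1\}$$
is a $K$-basis of $e_0L_n^pe_0$, which in particular shows $\dim_K e_0L_n^pe_0 = 2n$. So it suffices to show that the $2n$ elements $\epsilon^0,\epsilon^1,\dots,\epsilon^{2n-1}$ are linearly independent (equivalently, that they span).

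\textbf{First} I would observe that $1+\epsilon p(\epsilon)$ is a unit in $e_0L_n^pe_0$, since $\epsilon p(\epsilon)$ lies in the radical (indeed $\epsilon$ does, and $\epsilon^{2n}=0$), so its inverse is the finite sum $\sum_{k\ge 0}(-\epsilon p(\epsilon))^k$. Consequently each power $(1+\epsilon p(\epsilon))^{\ell+1}$ is a unit whose expansion has constant term $1$; multiplying $\epsilon^{m}$ by such a unit yields $\epsilon^m$ plus a $K$-linear combination of strictly higher powers of $\epsilon$ (all of which still lie in $e_0L_n^pe_0$ and vanish beyond exponent $2n-1$). This means the change of basis between $\{\epsilon^j : 0\le j<2n\}$ and the known basis above is given, after ordering both sets by the exponent of the leading power of $\epsilon$, by a unitriangular matrix: the element of type (4) associated to $\ell$ is $\pm\,\epsilon^{2\ell+2}+(\text{higher powers})$ and the element of type (7) associated to $\ell$ is $\pm\,\epsilon^{2\ell+3}+(\text{higher powers})$, which together with $e_0=\epsilon^0$ and $\epsilon=\epsilon^1$ account for all exponents $0,1,2,\dots,2n-1$ exactly once.

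\textbf{Therefore} the transition matrix is invertible over $K$, and since the known set is a basis of the $2n$-dimensional space $e_0L_n^pe_0$, so is $\{\epsilon^j : 0\le j<2n\}$. \textbf{The only point requiring a little care} is bookkeeping: one must check that the exponents $\{0\}\cup\{1\}\cup\{2\ell+3 : 0\le\ell<n-1\}\cup\{2\ell+2 : 0\le\ell<n-1\}$ are precisely $\{0,1,\dots,2n-1\}$ without repetition (the odd exponents $1,3,\dots,2n-1$ and the even exponents $0,2,\dots,2n-2$), and that multiplication by the unit $(1+\epsilon p(\epsilon))^{\ell+1}$ genuinely only raises the $\epsilon$-exponent — both of which are routine. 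I expect no serious obstacle here; the work has essentially been done in assembling the earlier basis, and this lemma is a clean triangularity argument repackaging it into the more convenient form $\{1,\epsilon,\epsilon^2,\dots,\epsilon^{2n-1}\}$.
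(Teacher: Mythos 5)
Your proof is correct and is essentially the paper's own argument: both express the previously established basis $\{e_0,\epsilon\}\cup\{\epsilon^{2\ell+3}(1+\epsilon p(\epsilon))^{\ell+1}\}\cup\{\epsilon^{2\ell+2}(1+\epsilon p(\epsilon))^{\ell+1}\}$ in terms of the powers $\epsilon^0,\dots,\epsilon^{2n-1}$ and observe that the transition matrix is upper triangular with unit diagonal, hence invertible. The extra remark about $1+\epsilon p(\epsilon)$ being a unit is harmless but not needed; all that matters is that multiplying by it only adds strictly higher powers of $\epsilon$.
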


\begin{Remark}
{\em
Of course, we put $\epsilon^0=e_0$ in Lemma~\ref{epsilonpower}.
}
\end{Remark}

\begin{proof}
We know that the set
$${\mathcal S}:=\{e_0\}\cup\{\epsilon\}\cup\{\epsilon^{2\ell+3}(1+\epsilon p(\epsilon))^{\ell+1}\;|\;0\leq\ell<n-1\}\cup
\{\epsilon^{2\ell+2}(1+\epsilon p(\epsilon))^{\ell+1}\;|\;0\leq\ell<n-1\}$$
forms a basis of $e_0L_n^pe_0$.
Expressing these elements as linear combinations of the set $\{\epsilon^\ell\;|\;0\le\ell<2n\}$
one obtains a square upper triangular matrix with diagonal entries $1$. Hence since
$\mathcal S$ is a basis, also
$\{\epsilon^\ell\;|\;0\le\ell<2n\}$ is a $K$-basis of $e_0L_n^pe_0$.
\end{proof}

\begin{Lemma}\label{basisofcommutatorquotient}
For all $n\ge 0$ the factor space
$L_{n+1}^p/[L_{n+1}^p,L_{n+1}^p]$ has a $K$-linear generating set
$$\{e_0,e_1,\dots,e_n\}\cup \{\epsilon^{2m+1}\,|\,0\le m\le n\}.$$
\end{Lemma}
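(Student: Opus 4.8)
The plan is to combine the explicit basis from Proposition~\ref{abasis} with the commutator computations carried out just above the statement, reducing everything systematically modulo $[L_{n+1}^p,L_{n+1}^p]$. First I would recall that the basis of $L_{n+1}^p$ in Proposition~\ref{abasis} is a union of bases of the spaces $e_iL_{n+1}^pe_j$, and that a basis element of type $(1),(2),(6)$ never has $i=j$, so it lies in $\sum_{i\ne j}e_iL_{n+1}^pe_j$; every such basis element is a path $x$ from $i$ to $j$ with $i\ne j$, hence $x = e_ix = xe_j$, and one checks $e_ix - xe_i = x$ is a commutator, so $x \equiv 0$ modulo $[L_{n+1}^p,L_{n+1}^p]$. (More precisely, write $x = uv$ with $u$ an initial arrow and $v$ the rest, so $x = uv \equiv vu$, and iterate to cyclically permute; any path between distinct vertices can be cyclically rotated to itself multiplied on the left by an idempotent $e_j\ne e_i$ annihilating it, forcing $x\equiv 0$.) Thus modulo $[L_{n+1}^p,L_{n+1}^p]$ only the basis elements of type $(3),(4),(5),(7)$ with $i=j$ survive as potential generators.

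Next I would eliminate the surviving off-diagonal contributions and the higher-vertex diagonal ones. The type~$(3)$ elements with $i=j$ are exactly the trivial paths $e_0,\dots,e_n$, which stay. For the diagonal type~$(4)$ and type~$(7)$ elements sitting at a vertex $i\ge 1$: using the relations $a_s\ol a_s + \ol a_{s-1}a_{s-1}=0$ one rewrites $a_ia_{i+1}\cdots a_\ell\ol a_\ell\cdots\ol a_i = \pm(a_i\ol a_i)^{\ell-i+1}$ and then, as in the displayed computation preceding Lemma~\ref{epsilonpower} (the commutator identity $[a_i,\,a_{i+1}\cdots\epsilon\cdots a_{i-1}]$ shows two type~$(7)$ elements at the same vertex $i$ differ by a commutator, and similarly one slides the factor $a_i\ol a_i = -\ol a_{i-1}a_{i-1}$ around the cycle), every diagonal type~$(4)$ or $(7)$ basis element at vertex $i\ge 1$ is congruent modulo $[L_{n+1}^p,L_{n+1}^p]$ to one supported at vertex $0$; equivalently, cyclically rotating $(a_i\ol a_i)^m = (-1)^m(\ol a_{i-1}a_{i-1})(a_i\ol a_i)^{m-1}$ moves a factor from the right end to the left and shifts the supporting vertex down by one. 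So modulo the commutator subspace we are reduced to the trivial paths together with the diagonal elements at vertex $0$, i.e. to elements of $e_0L_{n+1}^pe_0$.

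Finally I would invoke Lemma~\ref{epsilonpower}: a $K$-basis of $e_0L_{n+1}^pe_0$ is $\{\epsilon^\ell \mid 0\le \ell < 2(n+1)\}$. The even powers $\epsilon^{2m}$ with $m\ge 1$ arise (up to the invertible-in-$e_0L_{n+1}^pe_0$ factor $(1+\epsilon p(\epsilon))^m$) from type~$(4)$ diagonal elements, which I have just shown are images of higher-vertex diagonal elements under cyclic rotation; concretely, for $m\ge 1$ one has $\epsilon^{2m}(1+\epsilon p(\epsilon))^m = \pm(a_0\ol a_0)^m = \mp \ol a_{n}\cdots$ — rather, using $a_0\ol a_0 = -\ol{a}_{?}a_{?}$-type slides one writes $(a_0\ol a_0)^m$ as a cyclic rotate of a path through higher vertices, hence $\equiv 0$ modulo $[L_{n+1}^p,L_{n+1}^p]$ — so every even power $\epsilon^{2m}$ with $m\ge 1$ is itself a commutator, and $\epsilon^0 = e_0$ is already in our list. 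That leaves exactly the odd powers $\epsilon^{2m+1}$ for $0\le m\le n$ (there are $n+1$ of them, matching the range $0\le 2m+1 < 2(n+1)$) as generators coming from $e_0L_{n+1}^pe_0$. Putting the two pieces together, $\{e_0,\dots,e_n\}\cup\{\epsilon^{2m+1}\mid 0\le m\le n\}$ is a $K$-linear generating set of $L_{n+1}^p/[L_{n+1}^p,L_{n+1}^p]$.

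The main obstacle I anticipate is the bookkeeping needed to make the "cyclic rotation" arguments rigorous: one must check carefully that sliding a factor $a_s\ol a_s = -\ol a_{s-1}a_{s-1}$ around a closed path really is realized by a sequence of elementary commutator moves $uv\mapsto vu$ (so that the two expressions are genuinely congruent modulo $[L_{n+1}^p,L_{n+1}^p]$, not merely equal up to sign in the algebra), and that the even powers $\epsilon^{2m}$, $m\ge 1$, can indeed be written as paths passing through a vertex $\ge 1$ so that this rotation applies and makes them commutators. The odd powers resist this — rotating $\epsilon^{2m+1}$ through the cycle still leaves an odd number of $a$'s and $\ol a$'s and one cannot shed the lone $\epsilon$ — which is exactly why they survive; but pinning down that they are genuinely needed (i.e. not themselves commutators) is not required here, since the statement only claims a generating set, not a basis.
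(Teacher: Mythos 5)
Your overall route --- discard the non-closed paths, rotate every closed basis path down to vertex $0$ using $uv\equiv vu$ modulo commutators, and then analyse $e_0L_{n+1}^pe_0$ via Lemma~\ref{epsilonpower} --- is a legitimate alternative to the paper's argument, which instead inducts on $n$ via the surjection of Lemma~\ref{induction} and computes the image of $L_{n+1}^pe_nL_{n+1}^p$ in the commutator quotient. However, your final step contains a genuine error. From $(a_0\ol a_0)^m=\pm\,\epsilon^{2m}(1+\epsilon p(\epsilon))^m\in[L_{n+1}^p,L_{n+1}^p]$ you may \emph{not} conclude that $\epsilon^{2m}$ ``is itself a commutator'': the commutator subspace is not an ideal, so the unit $(1+\epsilon p(\epsilon))^m$ of the local ring $e_0L_{n+1}^pe_0$ cannot be cancelled. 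The claim is in fact false: in $L_2^{X^0}$ one has $\epsilon^2=\epsilon^3+a_0\ol a_0\equiv\epsilon^3\not\equiv 0$ modulo the commutator space --- this is precisely the computation in the paper's $n=2$ example showing $\epsilon\notin T_1(A_2^0)$. What is true, and what suffices for a \emph{generating set}, is that $\epsilon^{2m}=(\epsilon^2+\epsilon^3p(\epsilon))^m-\epsilon^{2m}\bigl((1+\epsilon p(\epsilon))^m-e_0\bigr)\equiv -(\hbox{terms } \epsilon^\ell \hbox{ with } \ell>2m)$; a downward induction on $m$ (starting from $\epsilon^{2n+2}=0$) then shows each even power with $m\ge 1$ is congruent to a linear combination of \emph{odd} powers. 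Your argument should be rewritten to say this.

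Two smaller repairs are needed. First, to know that $(a_0\ol a_0)^m$ lies in the commutator space at all, the rotation must go \emph{up} the quiver, not down: $(a_i\ol a_i)^m\equiv\pm(a_{i+1}\ol a_{i+1})^m$ terminates at the last vertex, where $(a_{n-1}\ol a_{n-1})^m=0$ for $m\ge 2$ and $a_{n-1}\ol a_{n-1}=[a_{n-1},\ol a_{n-1}]$ because of the relation $\ol a_{n-1}a_{n-1}=0$; your text gestures at this but is garbled at exactly that point. Your downward rotation only shows all the $(a_i\ol a_i)^m$ are congruent to one another, not that they vanish in the quotient. Second, the diagonal type (5) elements at a vertex $i\ge 1$, namely $\ol a_{i-1}\cdots\ol a_0\epsilon a_0\cdots a_{i-1}$, also need to be rotated into $e_0L_{n+1}^pe_0$ (giving $\pm\epsilon(a_0\ol a_0)^i$); you list type (5) among the survivors but then treat only types (4) and (7). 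With these repairs the argument goes through and is, if anything, more direct than the paper's induction on $n$.
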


\begin{proof}
It is a general fact that every non-closed path (i.e. a path with
different start and end point) is a commutator; in fact, take
the commutator with the trivial path corresponding to the start point
(or end point).
Moreover, it is easy to see that the cosets of the trivial paths are
always linearly independent modulo the commutator subspace.

In view of Lemma~\ref{induction} we only need to detect elements outside
the commutator of $L_n^p$ which become a commutator in $L_{n+1}^p$ and determine
which elements of $L_{n+1}^pe_nL_{n+1}^p$ are commutators.

We shall proceed by induction on $n$.
The lemma is clearly true for $n=0$. For $n>0$ we can use the
list of closed paths given in Proposition \ref{abasis}.
We shall start by identifying certain closed paths as being commutators.

$\bullet$ For all $m\in\{0,1,\ldots,n-1\}$ we have
\begin{eqnarray*}
a_ma_{m+1}\dots a_{n-1}\ol a_{n-1}\ol a_{n-2}\dots \ol a_m&=&
(a_ma_{m+1}\dots a_{n-1})(\ol a_{n-1}\ol a_{n-2}\dots \ol a_m)\\
& &-
(\ol a_{n-1}\ol a_{n-2}\dots \ol a_m)(a_ma_{m+1}\dots a_{n-1}) \\
& \in & [L_{n+1}^p,L_{n+1}^p]
\end{eqnarray*}
since $(\ol a_{n-1}\ol a_{n-2}\dots \ol a_m)(a_ma_{m+1}\dots a_{n-1})=0$ by
Lemma~\ref{nichtumkehren}.
\smallskip

$\bullet$ Using the defining relation $\ol a_{n-1}a_{n-1}=0$ in $L_{n+1}^p$
we have that
$$a_{n-1}\ol a_{n-1} = a_{n-1}\ol a_{n-1}-\ol a_{n-1}a_{n-1}
= [a_{n-1},\ol a_{n-1}] \in [L_{n+1}^p,L_{n+1}^p].
$$
Moreover, using the relation $\ol a_ia_i=a_{i+1}\ol a_{i+1}$
for all $i\in\{0,1,\ldots,n-2\}$ we have that
$$[a_i,\ol a_i]=a_i\ol a_i-\ol a_ia_i=a_i\ol a_i-a_{i+1}\ol a_{i+1}.$$
Inductively we can assume that
$a_{i+1}\ol a_{i+1}\in [L_{n+1}^p,L_{n+1}^p]$ and hence
we deduce that $a_i\ol a_i\in [L_{n+1}^p,L_{n+1}^p]$ for all $i\in\{0,1,\ldots,n-1\}$.

Moreover, consider a power $(a_i\ol a_i)^m$ for some integer $m\ge 2$.
Then for all $i\in\{0,1,\ldots,n-2\}$ we have
$$[a_i,(\ol a_ia_i)^{m-1}\ol a_i]=(a_i\ol a_i)^m-(\ol a_ia_i)^m
=(a_i\ol a_i)^m-(a_{i+1}\ol a_{i+1})^m.
$$
Inductively, we obtain that
$(a_i\ol a_i)^m \equiv (a_{n-1}\ol a_{n-1})^m \mod\; [L_{n+1}^p,L_{n+1}^p];$
but $(a_{n-1}\ol a_{n-1})^m =0 $ for $m\ge 2$ (using the defining relation
$\ol a_{n-1}a_{n-1}=0$).

Together with the above arguments for the case $m=1$ we can thus deduce
\begin{equation}
(a_{i}\ol a_{i})^m \in [L_{n+1}^p,L_{n+1}^p] \mbox{ for all $i\in\{0,1,\ldots,n-1\}$
and all $m\ge 1$}.
\end{equation}

\smallskip

$\bullet$ In particular the preceding arguments imply that
$(\epsilon^2+\epsilon^3p(\epsilon))^m=-(a_0\ol a_0)^m\in [L_{n+1}^p,L_{n+1}^p]$
for all integers $m\ge 1$.
\medskip

In a second step after showing certain closed paths to be commutators
we now examine (nontrivial) closed paths in $L_{n+1}^pe_nL_{n+1}^p$
and in particular determine the dimension of the image in the factor space
$L_{n+1}^pe_nL_{n+1}^p/[L_{n+1}^p,L_{n+1}^p]$.
According to Proposition~\ref{abasis}
there are two types of such paths, the long paths
$a_ia_{i+1}\ldots a_{n-1}\ol a_{n-1}\ldots \ol a_{0}\epsilon a_0\ldots a_{i-1}$
for $i\in\{0,1,\ldots,n\}$ corresponding to socle
elements and the short paths
$a_ia_{i+1}\dots a_{n-1}\ol a_{n-1}\dots\ol a_i$
for $i\in\{0,1,\ldots,n-1\}$.

For the latter we already observed
at the beginning of the proof that they are all commutators, i.e.
that $a_ia_{i+1}\dots a_{n-1}\ol a_{n-1}\dots\ol a_i\in [L_{n+1}^p,L_{n+1}^p]$
for all $i\in\{0,1,\ldots,n-1\}$.

For the former paths, corresponding to socle elements,
consider for $i\in\{1,\ldots,n\}$ the commutator
\begin{eqnarray*}
[a_i\ldots a_{n-1}\ol a_{n-1}\ldots \ol a_0\epsilon,a_0\ldots a_{i-1}]
& = &
a_ia_{i+1}\dots a_{n-1}\ol a_{n-1}\dots \ol a_0\epsilon a_0\dots a_{i-1} \\
& & -
a_0\dots a_{i-1}a_ia_{i+1}\dots a_{n-1}\ol a_{n-1}\dots \ol a_0\epsilon \\
& = &  a_ia_{i+1}\dots a_{n-1}\ol a_{n-1}\dots \ol a_0\epsilon a_0\dots a_{i-1}
- \epsilon^{2n+1}
\end{eqnarray*}
where for the last equation see Remark \ref{Rem-abasis}\,(3). Therefore,
all the long paths corresponding to socle elements are equivalent to
$\epsilon^{2n+1}$ modulo the commutator space.

\smallskip

Therefore, the image of $L_{n+1}^pe_nL_{n+1}^p$ in $L_{n+1}^p/[L_{n+1}^p,L_{n+1}^p]$
is $2$-dimensional with a basis given by the cosets of
$\{e_n,\epsilon^{2n+1}\}$.

The assertion of the lemma now follows by induction, using that
$L_{n+1}^p/\left(L_{n+1}^pe_nL_{n+1}^p\right) = L_n^p$
by Lemma \ref{induction}.
\end{proof}

\begin{Remark}
{\em
We did not yet prove that this generating set is actually a $K$-basis.
This fact is going to be shown in Proposition~\ref{explicitdescrofK}.
}
\end{Remark}

%%%%%%%%%%%%%%%%%%%%%%%%%%%%%%%%%%%%%%%%%%%%%%%%%%%%%%%%%%%%%%%%%%%%%
\subsection{The centre}

%K\"ulshammer ideals are ideals inside the centre of an algebra.
The aim of this section is to have a look at the centres of the
deformed preprojective algebras $L_n^p$ of type $L$.
The centre will be important to us by the following observation.
It is not so difficult to write down quite a lot of
commutators, as we have seen in Section \ref{Sec:commutator}.
It is however difficult in general to show that these
commutators actually generate the commutator space.
%Useful for this purpose is
%the following link to the centre of the algebra in case the algebra is symmetric.
By the discussion in Section \ref{Nakayamatwistedcentre},
for a symmetric algebra $A$ we get
$$\dim_K A=\dim_K [A,A]+\dim_K Z(A).$$

Since the canonical projection $\pi_n:L_{n+1}^p\lra L_n^p$
from Lemma~\ref{induction}
is a surjective algebra homomorphism, the restriction of
$\pi_n$ to the centre $Z(L_{n+1}^p)$ induces a ring homomorphism
$Z(L_{n+1}^p)\lra Z(L_n^p).$ In principle, we could use this to determine
the centre of $L_n^p$ inductively although this might become quite technical.

Fortunately, with the methods developed in this paper we shall not really
need to determine the entire centre; it will turn out that it suffices
to find one central element whose powers generate a large enough central
subspace.

\medskip

\begin{Lemma}\label{generatingsetofcentre}
The following holds for the deformed preprojective algebra $L_n^p$.
\begin{enumerate}
\item[{(1)}] The element
$\epsilon^2+\epsilon^3p(\epsilon)+\sum_{\ell=0}^{n-3}(-1)^{\ell+1}\ol
a_{\ell}a_{\ell}$
is contained in the centre of $L_n^p$.
\item[{(2)}] The following subset is a $K$-free subset of the centre
$$\left\{\left(\epsilon^2+\epsilon^3p(\epsilon)+
\sum_{\ell=0}^{n-3} (-1)^{\ell+1}\ol a_{\ell}a_{\ell} \right)^s\;|\;
0\le s\le n-1\right\}\subseteq Z(L_n^p).$$
\item[{(3)}] $\soc(L_n^p)\subseteq Z(L_n^p)$.
\end{enumerate}
\end{Lemma}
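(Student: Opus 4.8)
The three statements are of increasing difficulty, and I would prove them in order.

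For (1), I would verify directly that the proposed element $z:=\epsilon^2+\epsilon^3p(\epsilon)+\sum_{\ell=0}^{n-3}(-1)^{\ell+1}\ol a_{\ell}a_{\ell}$ commutes with every algebra generator, using Lemma~\ref{Lemma-Nakayama-alggen}-style reasoning (it suffices to check the generators $e_i,\epsilon,a_i,\ol a_i$). Commutativity with the idempotents $e_i$ is automatic once one checks $e_i z = z e_i$ summand by summand — note $\ol a_\ell a_\ell\in e_\ell L_n^p e_\ell$, and $\epsilon^2,\epsilon^3p(\epsilon)\in e_0L_n^pe_0$, while using the defining relation $\epsilon^2+\epsilon^3p(\epsilon)+a_0\ol a_0=0$ one sees $\epsilon^2+\epsilon^3p(\epsilon)=-a_0\ol a_0\in e_0L_n^pe_0$ too, so $z=\sum_\ell (\text{something in }e_\ell L_n^p e_\ell)$ lies in the obvious subspace and commutes with all $e_i$. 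For commutativity with $\epsilon$: only the $e_0$-component matters, and $-a_0\ol a_0 = \epsilon^2+\epsilon^3p(\epsilon)$ obviously commutes with $\epsilon$. For commutativity with $a_r$ and $\ol a_r$ the key computation is the telescoping identity coming from the preprojective relations $\ol a_{s-1}a_{s-1} = -a_s\ol a_s$ (equivalently $\ol a_\ell a_\ell = a_{\ell+1}\ol a_{\ell+1}$, which also gives $(-1)^{\ell+1}\ol a_\ell a_\ell = (-1)^{\ell} a_{\ell+1}\ol a_{\ell+1}$): multiplying $z$ by $a_r$ on the left versus the right, the terms with index $\ell=r-1$ and $\ell=r$ combine (using $a_0\ol a_0 = -(\epsilon^2+\epsilon^3 p(\epsilon))$ and the fact that $\epsilon$ commutes with $a_0\ol a_0$ for the boundary case $r=0,1$), and at the top end one uses the relation $\ol a_{n-2}a_{n-2}=0$ to kill the would-be term of index $\ell=n-2$; this is exactly why the sum in $z$ runs only up to $n-3$. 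I expect the sign-bookkeeping here to be the one genuinely fiddly point, but it is routine.

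For (2), having a single central element $z$, linear independence of $\{z^s \mid 0\le s\le n-1\}$ over $K$ follows by a filtration/degree argument. Write $z = -a_0\ol a_0 + \sum_{\ell=1}^{n-3}(-1)^{\ell+1}\ol a_\ell a_\ell$ and note each $\ol a_\ell a_\ell$ is a path of length $2$, and by the preprojective relations all these length-$2$ closed paths at different vertices are equal up to sign to $a_0\ol a_0$ only after a change of component — but since they sit in distinct $e_\ell L_n^p e_\ell$ they are linearly independent, so $z$ genuinely has a nonzero component in $\rad^2 / \rad^3$. Then $z^s$ lies in $\rad^{2s}$ and, using Lemma~\ref{epsilonpower} (the powers $\epsilon^j$, $0\le j<2n$, form a basis of $e_0L_n^pe_0$) together with $(a_0\ol a_0)^s = \pm\,\epsilon^{2s}(1+\epsilon p(\epsilon))^s$ from the computations preceding Lemma~\ref{epsilonpower}, the $e_0$-component of $z^s$ is $\pm\epsilon^{2s}(1+\epsilon p(\epsilon))^s + (\text{higher }\epsilon\text{-powers})$, which is nonzero precisely when $2s<2n$, i.e. $s\le n-1$, and these leading terms $\epsilon^{2s}$ are distinct for distinct $s$. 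Hence the $z^s$ for $0\le s\le n-1$ are $K$-linearly independent; an upper-triangular change-of-basis matrix argument (as in the proof of Lemma~\ref{epsilonpower}) makes this precise.

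For (3), I would argue that any element $x\in\soc(L_n^p)$ is central. By Remark~\ref{Rem-abasis}(2) the socle is spanned by the basis paths of length $2n-1$, one for each vertex $i$, and each such element $x_i$ lies in $e_i L_n^p e_i$ (it is a closed path at $i$, being the socle generator of the corresponding projective indecomposable). Since $L_n^p$ is generated by $e_0,\dots,e_{n-1},\epsilon,a_0,\dots,a_{n-2},\ol a_0,\dots,\ol a_{n-2}$, it suffices to check $gx_i = x_i g$ for each generator $g$. For $g = e_j$ both sides are $x_i$ if $j=i$ and $0$ otherwise. For $g$ an arrow, $g x_i$ and $x_i g$ are paths of length $\ge 2n$, hence lie in $\rad^{2n}(L_n^p)$; but from the basis in Proposition~\ref{abasis} the longest paths have length $2n-1$ (Remark~\ref{Rem-abasis}(2)), so $\rad^{2n}(L_n^p)=0$ and both products vanish. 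Therefore $gx_i = 0 = x_i g$ for every arrow $g$, and $x_i$ is central; by linearity $\soc(L_n^p)\subseteq Z(L_n^p)$. The only subtle point is making sure every generator–socle product indeed has length $\ge 2n$: for an arrow $g$ and a path $x_i$ of length $2n-1$ starting and ending at $i$, the product $g x_i$ is nonzero in the path algebra only if $g$ ends where $x_i$ starts, and then it is a path of length $2n$, which is $0$ in $L_n^p$; similarly on the other side. This is immediate but worth stating, and I regard it as the mildest of the three arguments.
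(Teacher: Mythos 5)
Your proposal is correct and follows essentially the same route as the paper: part (1) by checking commutation with each algebra generator via the telescoping relations $\ol a_{s-1}a_{s-1}=-a_s\ol a_s$ and $\ol a_{n-2}a_{n-2}=0$, part (2) by projecting onto the $e_0$-component and invoking Lemma~\ref{epsilonpower} (your version is in fact more detailed than the paper's one-line reference), and part (3) by observing that products of arrows with socle elements vanish. The only trivial quibble is that with the paper's left-to-right composition convention $\ol a_\ell a_\ell$ is a closed path at vertex $\ell+1$ rather than $\ell$; this off-by-one does not affect any of your arguments.
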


\begin{proof}
(1) For proving that $\lambda:=\epsilon^2+\epsilon^3p(\epsilon)+
\sum_{\ell=0}^{n-3}(-1)^{\ell+1}\ol a_{\ell} a_{\ell}$
is a central element it is sufficient
to show that it commutes with the algebra generators of $L_n^p$.
This is clear for the trivial paths $e_i$ since all paths occurring in
$\lambda$ are closed paths.
For the loop $\epsilon$ we get
$\epsilon\lambda = \epsilon^3+\epsilon^4p(\epsilon) = \lambda\epsilon$.
For the arrow $a_0$ we have, using the defining relation
$\epsilon^2+\epsilon^3p(\epsilon)+a_0\ol a_0=0$, that
$$\lambda a_0 = (\epsilon^2+\epsilon^3p(\epsilon))a_0 = (-a_0\ol a_0)a_0
= a_0\lambda.
$$
Similarly, for the arrow $\ol a_0$ have
$$\lambda \ol a_0 = -\ol a_0 a_0\ol a_0 = \ol a_0(\epsilon^2+\epsilon^3p(\epsilon))
= \ol a_0\lambda.
$$
For the arrows $a_i$ where $1\le i\le n-2$, we use the relations
$\ol a_{i-1} a_{i-1}+a_i\ol a_i=0$ and get
$$\lambda a_i = (-1)^i \ol a_{i-1}a_{i-1}a_i = (-1)^{i+1} a_i\ol a_i a_i
= a_i\lambda.
$$
Finally, we get in a similar fashion for the arrows $a_i$ where $1\le i\le n-2$
that
$$\lambda \ol a_i = (-1)^{i+1} \ol a_{i} a_{i}\ol a_i = (-1)^{i+2}
\ol a_{i}\ol a_{i-1}  a_{i-1} = (-1)^{i}
\ol a_{i}\ol a_{i-1}  a_{i-1}
= \ol a_i\lambda.
$$

(2) The fact that the powers of $\epsilon^2+\epsilon^3p(\epsilon)+
\sum_{\ell=0}^{n-3}(-1)^{\ell+1}\ol a_{\ell}a_{\ell}$
form a linearly independent set comes from the
fact that the powers of $\epsilon$ form a linearly independent set (cf Lemma~\ref{epsilonpower}).

(3) This holds since multiplication from the left or right
of any basis element of the socle
with a path of length at least 1 gives 0. Moreover, the algebras $L_n^p$
are weakly symmetric, i.e. the socle has a basis
consisting of closed paths, so any socle element also commutes with
the trivial paths.
\end{proof}

We are now in the position to give a basis of the commutator space
$[L_n^{{p}},L_n^{{p}}]$ of the deformed preprojective algebras
$L_n^{{p}}$ of type $L$. As a consequence we can strengthen the
statement in Lemma \ref{basisofcommutatorquotient}; namely the
cosets of $\{e_0,\ldots,e_{n-1}\}\cup \{\epsilon^{2\ell+1}\,|\,0\le \ell \le n-1\}$
are a basis (and not only a generating set)
of the factor space $L_n^{{p}}/[L_n^{{p}},L_n^{{p}}]$.

\begin{Proposition}
\label{explicitdescrofK} For any polynomial $p(X)\in K[X]$
and $n\ge 2$ the following holds for the deformed preprojective
algebras $L_n^{p}$.
\begin{enumerate}
\item[{(a)}] The commutator space has dimension
$$\dim_K [L_n^{{p}},L_n^{{p}}] = \dim_K L_n^{{p}} - 2n
= \frac{1}{3} n(n-1)(2n+5).$$
\item[{(b)}] The centre has dimension
$\dim_K Z(L_n^{X^{p}}) = 2n.$
\item[{(c)}] The cosets of
$\{e_0,\ldots,e_{n-1}\}\cup \{\epsilon^{2\ell+1}\,|\,0\le \ell \le n-1\}$
form a basis of the factor space $L_n^{{p}}/[L_n^{X^{p}},L_n^{{p}}]$.
\item[{(d)}] In terms of the basis $\mathcal{B}$ of $L_n^{{p}}$ given in
Proposition \ref{abasis},
a $K$-basis of $[L_n^{{p}},L_n^{{p}}]$ is given by
\begin{itemize}
\item[{(i)}]
all non-closed paths in $\mathcal{B}$
(i.e. with starting vertex different from the ending vertex),
\item[{(ii)}] all closed paths in $\mathcal{B}$
of even length at least $2$
with starting vertex different from vertex $0$,
\item[{(iii)}]
the difference of two closed paths in $\mathcal{B}$
of equal odd length with consecutive
starting vertices $i$ and $i+1$ where $0\le i\le n-2$,
%\item[{(iv)}] the elements
%$(\epsilon^2+\epsilon^{2j+3})^m$ where $1\le m\le n-1$.
\item[{(iv)}] the elements
$a_0a_1\ldots a_{\ell} \ol a_{\ell}\ol a_{\ell-1}\ldots \ol a_0$
where $0\le \ell\le n-2$.
\end{itemize}
\end{enumerate}
\end{Proposition}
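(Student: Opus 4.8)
The plan is to establish parts (b) and (a) first, by squeezing $\dim_K Z(L_n^p)$ between two bounds, and then to read off (c) and (d). Since $L_n^p$ is symmetric by Theorem~\ref{thm-symmetric}, we have the identity $\dim_K L_n^p=\dim_K[L_n^p,L_n^p]+\dim_K Z(L_n^p)$, so it suffices to pin down $\dim_K Z(L_n^p)$. For the upper bound, Lemma~\ref{basisofcommutatorquotient} provides a $K$-linear generating set of $L_n^p/[L_n^p,L_n^p]$ of cardinality $2n$ (namely the cosets of $\{e_0,\dots,e_{n-1}\}$ together with $\{\epsilon^{2\ell+1}\mid 0\le\ell\le n-1\}$), whence $\dim_K[L_n^p,L_n^p]\ge\dim_K L_n^p-2n$ and therefore $\dim_K Z(L_n^p)\le 2n$. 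Once the matching lower bound $\dim_K Z(L_n^p)\ge 2n$ is in place, part (b) follows, part (a) follows by inserting $\dim_K L_n^p=\tfrac13 n(n+1)(2n+1)$ from Remark~\ref{Rem-Cartan-dim} into $\dim_K[L_n^p,L_n^p]=\dim_K L_n^p-2n$ and simplifying to $\tfrac13 n(n-1)(2n+5)$, and part (c) is immediate: the $2n$ generators of Lemma~\ref{basisofcommutatorquotient} span a space of dimension exactly $2n=\dim_K L_n^p-\dim_K[L_n^p,L_n^p]$, hence form a basis.

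For the lower bound on the centre I would exhibit $2n$ linearly independent central elements by combining the two sources supplied by Lemma~\ref{generatingsetofcentre}: the $n$ powers $\lambda^0,\dots,\lambda^{n-1}$ of the central element $\lambda=\epsilon^2+\epsilon^3p(\epsilon)+\sum_{\ell}(-1)^{\ell+1}\ol a_\ell a_\ell$, which are linearly independent by Lemma~\ref{generatingsetofcentre}(2), and the socle $\soc(L_n^p)\subseteq Z(L_n^p)$, which is $n$-dimensional (one length-$(2n-1)$ path per vertex, cf. Remark~\ref{Rem-abasis}). The key point is that these two subspaces intersect trivially: multiplying a relation $\sum_s c_s\lambda^s=z\in\soc(L_n^p)$ by $e_0$ and using that $\lambda$ is central and that $e_0\lambda e_0=\epsilon^2+\epsilon^3p(\epsilon)$ yields $\sum_{s}c_s(\epsilon^2+\epsilon^3p(\epsilon))^s=e_0ze_0$; if $s_0$ is minimal with $c_{s_0}\neq 0$, the left-hand side, expressed in the basis $\{\epsilon^\ell\mid 0\le\ell<2n\}$ of $e_0L_n^pe_0$ from Lemma~\ref{epsilonpower}, has $\epsilon^{2s_0}$-coefficient $c_{s_0}\neq 0$, whereas $e_0ze_0\in e_0\soc(L_n^p)e_0=\langle\epsilon^{2n-1}\rangle$ has $\epsilon^{2s_0}$-coefficient $0$ (parity!). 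Hence all $c_s=0$, so $\dim_K Z(L_n^p)\ge n+n=2n$.

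For part (d) I would proceed in three steps. First, membership: the non-closed paths (i) are commutators with a trivial idempotent; the even closed paths of length $\ge 2$, which are exactly the type-(4) paths $a_i\cdots a_\ell\ol a_\ell\cdots\ol a_i$ and thus equal $\pm(a_i\ol a_i)^m$, lie in $[L_n^p,L_n^p]$ by the computations of Section~\ref{Sec:commutator}/Lemma~\ref{basisofcommutatorquotient} (they split into those based at vertex $0$, which are (iv), and those based at vertices $\ge 1$, which are (ii)); the differences in (iii) arise as $[a_i,x]$ with $x$ a path from vertex $i+1$ to vertex $i$, again as in Section~\ref{Sec:commutator}. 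Second, counting: from the basis $\mathcal B$ of Proposition~\ref{abasis} one checks that $\mathcal B$ contains $n(n+1)$ closed paths — $n$ trivial ones, $\tfrac12 n(n-1)$ of even length $\ge 2$, and, for each odd length $2k+1$ with $0\le k\le n-1$, exactly $k+1$ closed paths, one based at each of the vertices $0,1,\dots,k$ — so the set in (i)--(iv) has $\tfrac13 n(n+1)(2n+1)-n(n+1)+\tfrac12 n(n-1)+\tfrac12 n(n-1)=\tfrac13 n(n-1)(2n+5)=\dim_K[L_n^p,L_n^p]$ elements. Third, independence: perform a change of basis of $L_n^p$ by replacing, for each odd length $2k+1$, the $k+1$ closed paths $v_0,v_1,\dots,v_k$ of that length in $\mathcal B$ (ordered by their base vertex $0,1,\dots,k$) with $v_0$ together with the consecutive differences $v_0-v_1,\dots,v_{k-1}-v_k$; this is a unitriangular change, so the new set $\mathcal B'$ is again a $K$-basis of $L_n^p$. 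Deleting from $\mathcal B'$ the $2n$ elements $\{e_0,\dots,e_{n-1}\}$ and the $v_0$'s (one per odd length) leaves exactly the set of (i)--(iv), which is therefore linearly independent; being a linearly independent subset of $[L_n^p,L_n^p]$ of the correct cardinality, it is a basis of the commutator space. (As a byproduct this re-proves the strengthening of Lemma~\ref{basisofcommutatorquotient} mentioned just before the Proposition.)

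The step I expect to be most delicate is the bookkeeping in (d): one must be certain that items (ii) and (iv) together exhaust the even closed paths of length $\ge 2$ in $\mathcal B$, that the count "one closed path per vertex $0,\dots,k$ for each odd length $2k+1$" is exactly right (types (5) and (7) with $i=j$ at vertex $0$, versus only type (5) or type (7) at the higher vertices), and that the differences coming from the $[a_i,x]$ identities of Section~\ref{Sec:commutator} are precisely the consecutive differences listed in (iii). A second, minor subtlety is the verification that $\langle\lambda^0,\dots,\lambda^{n-1}\rangle\cap\soc(L_n^p)=0$, which rests on keeping track of which powers of $\epsilon$ occur in $e_0\lambda^s e_0$.
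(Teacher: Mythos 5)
Your proposal is correct and follows essentially the same route as the paper: the symmetric-algebra identity $\dim_K A=\dim_K[A,A]+\dim_K Z(A)$, the upper bound on the centre from the $2n$-element generating set of Lemma~\ref{basisofcommutatorquotient}, the matching lower bound from the powers of the central element of Lemma~\ref{generatingsetofcentre} together with the socle, and for (d) a membership-plus-counting argument with the same tallies. Your two added refinements — the $e_0(-)e_0$ parity argument for $\langle\lambda^s\rangle\cap\soc(L_n^p)=0$, and the unitriangular change of basis making the linear independence of the type (iii) differences explicit — are welcome clarifications of points the paper treats more briefly, but they do not change the strategy.
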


\begin{proof}
(a) For every
symmetric algebra $A$ we have
\begin{equation} \label{eq-dim-comm1}
\dim_K [A,A] = \dim_K A - \dim_K Z(A)
\end{equation}
since $\Hom_K(A/[A,A],K)\simeq Z(A)$ as $Z(A)$-modules.

In our situation for $A = L_n^{{p}}$ we get from Lemma
\ref{basisofcommutatorquotient} the lower bound
\begin{equation} \label{eq-dim-comm2}
\dim_K [A,A] \ge \dim_K L_n^{{p}} - 2n.
\end{equation}
We shall now produce sufficiently many linear independent
elements in the centre to obtain this also as
an upper bound.
The centre
$Z(L_n^p)$ contains the $K$-free subset
$$\left\{\left(\epsilon^2+\epsilon^3p(\epsilon)+
\sum_{\ell=0}^{n-3} (-1)^{\ell+1}\ol a_{\ell}a_{\ell} \right)^s\;|\;
0\le s\le n-1\right\}\subseteq Z(L_n^p)$$
of cardinality $n$. The $K$-vector
space generated by these elements intersects with $\soc(L_n^p)$
only in $\{0\}$ since $\epsilon^{2s+2}$ is not in the socle for
$s\in \{0,1,\dots,n-2\}$.
However, $\soc(L_n^p)$ belongs to the centre.
Hence we get that $Z(L_n^p)$ is of dimension at least $2n$.

Altogether, we get a lower bound for the dimension of the
centre, namely
$$\dim_K(Z(L_n^p))\geq (n-1)+n + 1 =2n.$$
Plugging this into formula (\ref{eq-dim-comm1}) and combining with
(\ref{eq-dim-comm2})
proves the first equality in part (a) of the lemma.

The second equality then follows by a direct calculation from the
formula for the dimension of $L_n^{p}$ given in
Remark \ref{Rem-Cartan-dim}.
\smallskip

(b) The statement in part (b) now follows directly from part (a) by using
formula (\ref{eq-dim-comm1}).
\smallskip

(c) Follows by combining part (a) and Lemma \ref{basisofcommutatorquotient}.
\smallskip

(d) For each of the types (i)-(iv) we shall first verify that all these
elements are actually contained in the commutator space, and then count
their number. At the end it will turn out that the total number of elements
in (i)-(iv) is ($\dim_K L_n^{p}-2n$), i.e. equal to the dimension of the
commutator space, cf. part (a).
Since the elements are linearly independent (being part of a basis),
the claim of part (d) then follows.

(i) Non-closed paths are always commutators (take the commutator with the
trivial path corresponding to the starting vertex).

The number of non-closed paths in $\mathcal{B}$ can be read off from
the Cartan matrix of $L_n^{p}$ given in Remark \ref{Rem-Cartan-dim}. Namely
as the dimension of $L_n^{p}$ minus the trace of the Cartan matrix,
i.e. we get $\dim_K L_n^{p} - n(n+1)$ non-closed paths in $\mathcal{B}$.

(ii) Such paths of even length
only occur in type (4) of Proposition \ref{abasis} and are of the form
$a_ia_{i+1}\ldots a_{\ell}\ol a_{\ell}\ol a_{\ell-1}\ldots \ol a_{i}$
where $i\neq 0$ and $i\le \ell\le n-2$.
Up to a sign, these paths are equal to $(a_i\ol a_i)^l$ (using the
relations $\ol a_r a_r + a_{r+1}\ol a_{r+1}$) and these
have been shown to be in the commutator space in the proof of Lemma
\ref{basisofcommutatorquotient}.

Summing over the possibilities for the various $i\neq 0$
there are $\frac{(n-1)(n-2)}{2}$ such paths.

(iii) Such a difference of closed paths with starting vertices $i$ and $i+1$ occurs
as a difference of a path of type (7) for vertex $i$ with a path of type
(5) or (7) for vertex $i+1$. More precisely,
these differences are of the form
$$a_ia_{i+1}\ldots a_{\ell}\ol a_{\ell}\ol a_{\ell-1}\ldots \ol a_0
\epsilon a_0\ldots a_{i-1} -
a_{i+1}\ldots a_{\ell}\ol a_{\ell}\ol a_{\ell-1}\ldots \ol a_0
\epsilon a_0\ldots a_{i},
$$
where $i\le \ell\le n-2$. This element is a commutator, namely
$[a_i, a_{i+1}\ldots a_{\ell}\ol a_{\ell}\ol a_{\ell-1}\ldots \ol a_0
\epsilon a_0\ldots a_{i-1}]$.

Summing over the possibilities for the various $i$
there are $\frac{n(n-1)}{2}$ such differences.

(iv) Up to a sign, these paths are equal to $(a_i\ol a_i)^l$ (using the
relations $\ol a_r a_r + a_{r+1}\ol a_{r+1}$) and these
have been shown to be in the commutator space in the proof of Lemma
\ref{basisofcommutatorquotient}.
Obviously, there are $n-1$ such paths.
\smallskip

The total number of elements in (i)-(iv) is easily computed to be
$\dim_K L_n^{X^{2j}} - 2n$, which is the dimension of the commutator space
by part (a). Thus part (d) follows.
\end{proof}

%%%%%%%%%%%%%%%%%%%%%%%%%%%%%%%%%%%%%%%%%%%%%%%%%%%%%%%%%%%%%%%%%%%%%%%
%%%%%%%%%%%%%%%%%%%%%%%%%%%%%%%%%%%%%%%%%%%%%%%%%%%%%%%%%%%%%%%%%%%%%%%
\section{The K\"ulshammer spaces and the main result}
\label{mainresultsection}

We now restrict to the case where the
deformation polynomial $p$ has the form $X^{2j}$ for some integer $j\ge 0$.
It has been shown in
\cite[Proposition 6.1]{BES} that the deformed algebras
$L_n^p$ for the polynomials $p=X^{2j}$ where $j\in\{0,1,\ldots,n-1\}$
form a family of pairwise non-isomorphic deformed preprojective algebras
of type $L$. Note that for all $j\ge n-1$ the algebra
$L_n^{X^{2j}}$ is the (undeformed) preprojective algebra of
type $L$; in fact, the only relation involving the polynomial $p$
reads
$$\epsilon^2 + a_0\ol a_0 +\epsilon^3p(\epsilon) =
\epsilon^2 + a_0\ol a_0 +\epsilon^{2j+3} = \epsilon^2 + a_0\ol a_0$$
because $\epsilon^{2n}=0$ in $L_n^p$.

Moreover, it has been announced \cite{Bialkowski-ICRA-Abstract}
that the algebras $L_n^{X^{2j}}$ for $j\in\{0,1,\ldots,n-1\}$
actually form a complete
list of representatives of the isomorphism classes of deformed preprojective
algebras of type $L$; details should appear in the forthcoming paper
\cite{BESIII}.

For the above reasons, focussing on the case of deformation
polynomials $p=X^{2j}$ is not really a restriction.

We continue to consider the deformed preprojective algebras
$A_n^j=L_n^{X^{2j}}$ over a field of characteristic $2$.
The K\"ulshammer spaces are defined as
$T_r(A_n^j)=\{x\in A_n^j\,|\,x^{2^r}\in [A_n^j,A_n^j]\}$
for any integer $r\ge 0$ (cf. the introduction).

In this section we shall derive the main results of the paper.
Firstly, we shall
give formulae for the dimensions of the K\"ulshammer
spaces $T_r(A_n^j)$, see Theorem \ref{mainresult} below.
Secondly, as an application we can distinguish certain of
the deformed preprojective algebras of type $L_n$ (over a
perfect field of characteristic $2$) up to
derived equivalence, see Theorem \ref{thm-derived} below.

The crucial link to distinguish algebras up to derived
equivalence by means of K\"ulshammer spaces has been
provided by the second author in \cite{Z}.
%C.\,Bessenrodt and the authors in \cite{BHZ}.
There it is shown that
for $K$ being a perfect field of characteristic $p>0$ and
for $\Lambda_1$ and $\Lambda_2$ being finite dimensional $K$-algebras
which are derived equivalent the codimensions of the K\"ulshammer
spaces are an invariant, i.e. for all $r\ge 0$ one has
\begin{equation} \label{formula-BHZ}
\dim_K \Lambda_1 - \dim_K T_r(\Lambda_1)=\dim_K \Lambda_2 -
\dim_K T_r(\Lambda_2).
\end{equation}

In \cite{LZZ} Liu, Zhou and the second author showed that
for any field $K$ of characteristic $p>0$ and any two finite dimensional
$K$-algebras $\Lambda_1$ and $\Lambda_2$, if $\Lambda_1$ and $\Lambda_2$
are stably equivalent of Morita type, then
\begin{equation}\label{LZZequation}
\dim_K T_r(\Lambda_1)-\dim_K [\Lambda_1,\Lambda_1] =
\dim_K T_r(\Lambda_2)-\dim_K [\Lambda_2,\Lambda_2]
\end{equation}
for all $r\ge 0$.

%If $\Lambda_1$ and $\Lambda_2$ are selfinjective, and $\Lambda_1$ and $\Lambda_2$
%are derived equivalent, then $\Lambda_1$ and $\Lambda_2$ are stably equivalent
%of Morita type.

%%%%%%%%%%%%%%%%%%%%%%%%%%%%%%%%%%%%%%%%%%%%%%%%%%%%%%%%%%%%%%%%%%%%
\subsection{Dimensions of K\"ulshammer spaces}
In this section we shall prove the main result on the dimensions of
the K\"ulshammer spaces $T_i(L_n^{X^{2j}})$ for the deformed preprojective
algebras. Before embarking on the general proof we shall give some
explicit examples which hopefully help the reader later by illustrating
the technicalities of the general arguments.
\medskip

\noindent
{\bf An example: the case $n=2$.}
Let us look at the algebras $A_2^j=L_2^{X^{2j}}$ as an illustration.
These algebras are given by a quiver with two vertices and relations
$\epsilon^4=0$, $\ol a_0 a_0=0$ and $\epsilon^2 + \epsilon^{2j+3}+a_0\ol a_0=0$.
Note that for $j\ge 1$ we get the undeformed algebra $A_2^1$ with relation
$\epsilon^2 + a_0\ol a_0=0$, whereas for $j=0$ we get a deformed
preprojective algebra $A_2^0$
with relation $\epsilon^2 + \epsilon^{3}+a_0\ol a_0=0$.

According to Proposition \ref{abasis} and Remark \ref{Rem-Cartan-dim}
the algebras $A_2^j$ are 10-dimensional
with a basis given by the paths
$$e_0,e_1,\epsilon, a_0,\ol a_0, a_0\ol a_0, \epsilon a_0,
\ol a_0\epsilon, a_0\ol a_0\epsilon, \ol a_0\epsilon a_0.$$
By Proposition \ref{explicitdescrofK} the commutator spaces $[A_2^j,A_2^j]$
are of dimension 6 and have a basis consisting of the elements
$$a_0,\ol a_0,a_0\ol a_0, \epsilon a_0, \ol a_0\epsilon,
a_0\ol a_0\epsilon - \ol a_0\epsilon a_0.$$
Note that all these bases are independent of $j$.

Now we consider the first K\"ulshammer space
$T_1(A_2^j) = \{x\in A_2^j\,|\,x^2\in [A_2^j,A_2^j]\}$.
For any $j\ge 0$ it is immediate from the relations that
the following seven basis elements of $A_2^j$ are contained
in the first K\"ulshammer space
$$\{a_0, \ol a_0, \epsilon a_0, \ol a_0\epsilon, a_0\ol a_0,
a_0\ol a_0\epsilon, \ol a_0\epsilon a_0\}\subset T_1(A_2^j).$$
On the other hand, it is a general observation that the
trivial paths $e_0,e_1$ can not be summands of an element in
a K\"ulshammer space (since trivial paths can't occur as summands
in an element from the commutator space).
This leave us with the remaining basis element $\epsilon$.
Here the situation changes for different $j$.

In the undeformed case $j\ge 1$ we have that
$\epsilon^2 = a_0\ol a_0 = [a_0,\ol a_0]\in [A_2^j,A_2^j]$
and hence $\epsilon\in T_1(A_2^j)$.

On the other, in the deformed case $j=0$ we have the
relation $\epsilon^2 = \epsilon^3 + a_0\ol a_0$ where
$a_0\ol a_0$ is a commutator but
$\epsilon^3=a_0\ol a_0\epsilon \not\in [A_2^0,A_2^0]$.
Therefore, $\epsilon\not\in T_1(A_2^0)$

In summary we have $\dim_K T_1(A_2^0)=7$ whereas
$\dim_K T_1(A_2^j)=8$ for all $j\ge 1$.

Using the result from \cite{Z} quoted above in (\ref{formula-BHZ})
we can deduce that the undeformed preprojective algebra
$A_2^1=L_2^{X^2}$ and the deformed preprojective algebra
$A_2^0=L_2^{X^0}$ are not derived equivalent.
Even in this small case $n=2$ this seems to be a nontrivial fact.

%%%%%%%%%%%%%%%%%%%%%%%%%%%%%%%%%%%%%%%%%%%%%%%%%%%%%%%%%%%%%
%%%%%%%%%%%%%%%%%%%%%%%%%%%%%%%%%%%%%%%%%%%%%%%%%%%%%%%%%%%%%

\medskip

\noindent
{\bf Another example: K\"ulshammer spaces for $n=3$.}
The algebras $A_3^j$ have dimension 28, and their commutator spaces
have dimension 22. There are many basis elements which are obviously
in each of the K\"ulshammer ideals $T_r(A_3^j)$, for $r\ge 1$, namely

$\bullet$ all non-closed paths in $\mathcal{B}$, giving 16 basis elements
(since they square to zero)

$\bullet$ closed paths of length $\ge 3$ (since the algebras have radical length
6 they also square to zero); so another six such basis elements are
$a_0\ol a_0\epsilon$, $\ol a_0\epsilon a_0$, $a_0a_1\ol a_1\ol a_0\epsilon$,
$a_1\ol a_1\ol a_0\epsilon a_0$, $\ol a_1\ol a_0\epsilon a_0a_1$
$a_0a_1\ol a_1\ol a_0$.

$\bullet$ the two basis elements $a_0\ol a_0$ and $a_1\ol a_1$
(since $(a_1\ol a_1)^2=0$ and
$(a_0\ol a_0)^2$ is in the commutator space by the proof of Lemma 9).

Hence, $\dim_K T_r(A_3^j) \ge 24$ for all $r\ge 1$ and all $j$.

Given that the three trivial paths are not involved in any element
of the K\"ulshammer space, there is only one remaining basis element
to consider, namely $\epsilon$.

We start with the first K\"ulshammer space $T_1(A_3^j)$.

For $j=2$ we have $\epsilon^2 = a_0\ol a_0\in [A_3^2,A_3^2]$, i.e.
$\epsilon\in T_1(A_3^j)$.

For $j=1$ we get the following congruences modulo the commutator space
$$\epsilon^2 = \epsilon^5 +a_0\ol a_0 \equiv \epsilon^5
= \epsilon^8 +a_0\ol a_0 \epsilon^3 = a_0\ol a_0(\epsilon^6+a_0\ol a_0\epsilon)
= a_0\ol a_0a_0\ol a_0\epsilon \not\equiv 0$$
i.e. $\epsilon\not\in T_1(A_3^1)$.

For $j=0$ we similarly get the following congruences modulo the commutator space
$$\epsilon^2 = \epsilon^3 +a_0\ol a_0 \equiv \epsilon^3
= \epsilon^4 +a_0\ol a_0 \epsilon = \epsilon^5+a_0\ol a_0\epsilon^2+a_0\ol a_0\epsilon
= \epsilon^6 + a_0\ol a_0\epsilon^3+a_0\ol a_0\epsilon^2+a_0\ol a_0\epsilon
$$
$$ = a_0\ol a_0\epsilon^3 + a_0\ol a_0(\epsilon^3+a_0\ol a_0)+a_0\ol a_0\epsilon
\equiv a_0\ol a_0\epsilon \not\equiv 0$$
i.e. $\epsilon\not\in T_1(A_3^0)$.
Altogether we get
$\dim_K T_1(A_3^j) = \left\{
\begin{array}{ll}
24 & if~j=0,1 \\ 25 & if~j=2
\end{array}
\right.
$
%as predicted by the formula in Theorem 2 below.

Now we consider the second K\"ulshammer space
$T_2(A_3^j)=\{x\in A_3^j\,|\,x^4\in [A_3^j,A_3^j]\}$.
Again it only remains to consider the basis element $\epsilon$.

For $j=2$ there is nothing to check since $T_2(A_3^2)$ already attained
the maximal possible dimension 25.

For $j=1$ we get the following congruences (modulo commutator space)
$$\epsilon^4 = \epsilon^7 + a_0\ol a_0\epsilon^2 = a_0\ol a_0\epsilon^5
+ a_0\ol a_0a_0\ol a_0 \equiv 0$$
i.e. $\epsilon\in T_2(A_3^1)$.

Similarly we get for $j=0$ (modulo commutator space)
$$\epsilon^4 = \epsilon^5 + a_0\ol a_0\epsilon^2 =
\epsilon^6 + a_0\ol a_0\epsilon^3 + a_0\ol a_0\epsilon^3 + a_0\ol a_0
a_0\ol a_0 \equiv 0$$
i.e. $\epsilon\in T_2(A_3^0)$.

Altogether we get
$\dim_K T_2(A_3^j) = 25$ for all $j$.
\medskip

We now formulate the main result of this section.

\begin{Theorem} \label{mainresult} Let $K$ be a perfect field of characteristic $2$.
Then for all $0\le j < n$ we have
\begin{eqnarray*}
\dim_K T_i(L_n^{X^{2j}}) - \dim_K [L_n^{X^{2j}},L_n^{X^{2j}}]
&=&
n-\max\left(\left\lceil\frac{2n-(2^{i+1}-2)j-(2^{i+1}-1)}{2^{i+1}}\right\rceil,\;\;0\right).
\end{eqnarray*}
\end{Theorem}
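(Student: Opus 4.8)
The plan is to compute $\dim_K T_i(A)$, where $A:=L_n^{X^{2j}}$, via the Frobenius operation on degree-zero Hochschild homology. Write $\overline{x}$ for the image of $x\in A$ in $A/[A,A]\cong HH_0(A)$. In characteristic $2$ one has $(x+y)^2=x^2+y^2+[x,y]$, so $F\colon A/[A,A]\to A/[A,A]$, $\overline{x}\mapsto\overline{x^{2}}$, is a well-defined additive, $2$-semilinear map whose $i$-th iterate is $\overline{x}\mapsto\overline{x^{2^i}}$; by definition $T_i(A)/[A,A]=\ker F^i$. I would use the basis of $A/[A,A]$ from Proposition~\ref{explicitdescrofK}(c): the classes $\overline{e_0},\dots,\overline{e_{n-1}}$ together with $\overline{\epsilon^{2\ell+1}}$ for $0\le\ell\le n-1$. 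The latter $n$ classes span $V:=\rad(A)/[A,A]$, which is preserved by $F$; as $F$ fixes each $\overline{e_k}$ and acts injectively on their span (it is $\sum\lambda_k\overline{e_k}\mapsto\sum\lambda_k^2\overline{e_k}$), one has $\ker F^i=\ker(F^i|_V)$. Hence everything reduces to the action of $F$ on the $n$-dimensional space $V$.

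The core of the argument is the following description of $F|_V$. Inside the commutative local subalgebra $e_0Ae_0=K[\epsilon]/(\epsilon^{2n})$ the relation at the vertex $0$ reads $a_0\ol a_0=\epsilon^{2}(1+\epsilon^{2j+1})$ with $1+\epsilon^{2j+1}$ a unit, so $\epsilon^{2m}=(a_0\ol a_0)^m(1+\epsilon^{2j+1})^{-m}$ and the binomial theorem in characteristic $2$ yields, for odd $m$,
\[
\epsilon^{2m}=(a_0\ol a_0)^m+\sum_{k\ge 1}\binom{m}{k}\epsilon^{\,2m+k(2j+1)}
\]
(a finite sum, terms with exponent $\ge 2n$ being $0$). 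By Proposition~\ref{explicitdescrofK}(d) the powers $(a_0\ol a_0)^m$ with $1\le m\le n-1$ lie in $[A,A]$. Taking $m=2\ell+1$ (odd, so the $k=1$ coefficient $\binom{m}{1}=m$ equals $1$) and reducing the higher even powers of $\epsilon$ that occur --- each of which, by the same identity applied repeatedly, contributes only classes $\overline{\epsilon^{2m'+1}}$ of strictly larger index --- one obtains: setting $\sigma(\ell):=2\ell+j+1$,
\[
F\bigl(\overline{\epsilon^{2\ell+1}}\bigr)=\overline{\epsilon^{2\sigma(\ell)+1}}+\bigl(\text{a combination of }\overline{\epsilon^{2m'+1}}\text{ with }m'>\sigma(\ell)\bigr)\quad\text{if }\sigma(\ell)\le n-1,
\]
while $F\bigl(\overline{\epsilon^{2\ell+1}}\bigr)=0$ if $\sigma(\ell)\ge n$.

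Because $\sigma$ is strictly increasing with $\sigma(\ell)>\ell$, this triangular shape is inherited by all iterates: $F^i\bigl(\overline{\epsilon^{2\ell+1}}\bigr)=\overline{\epsilon^{2\sigma^i(\ell)+1}}+(\text{higher-index terms})$ when $\sigma^i(\ell)\le n-1$ and $F^i\bigl(\overline{\epsilon^{2\ell+1}}\bigr)=0$ otherwise, where $\sigma^i(\ell)=2^i\ell+(2^i-1)(j+1)$. Since $\sigma^i$ is injective, the pivots are pairwise distinct, so the non-zero images are linearly independent; thus $\operatorname{rank}(F^i|_V)=\#\{\ell\in\{0,\dots,n-1\}\mid 2^i\ell+(2^i-1)(j+1)\le n-1\}$ and $\dim_K T_i(A)-\dim_K[A,A]=\dim_K\ker F^i=n-\operatorname{rank}(F^i|_V)$. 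This cardinality equals $\max\!\bigl(\lfloor(n-1-(2^i-1)(j+1))/2^i\rfloor+1,\,0\bigr)$; applying $\lfloor N/q\rfloor+1=\lceil(N+1)/q\rceil$ and observing that $2n-(2^{i+1}-2)(j+1)$ is even (hence $\not\equiv 1\bmod 2^{i+1}$), this rewrites precisely as $\max\!\bigl(\lceil(2n-(2^{i+1}-2)j-(2^{i+1}-1))/2^{i+1}\rceil,\,0\bigr)$, the asserted formula. One checks it specialises correctly to the cases $n=2,3$ treated above.

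The main difficulty is the second step: controlling $F|_V$ requires a precise understanding of how the even powers $\epsilon^{2m}$ reduce modulo $[A,A]$, resting on the explicit basis of $[A,A]$ in Proposition~\ref{explicitdescrofK}(d) and on a careful termination argument for the recursion that rewrites the higher even powers of $\epsilon$. A possibly cleaner alternative is to argue by induction on $n$ via the surjection $\pi_n\colon L_{n+1}^p\to L_n^p$ of Lemma~\ref{induction}, transporting the K\"ulshammer space computation along $\pi_n$ and treating separately the two extra classes $\overline{e_n}$ and $\overline{\epsilon^{2n+1}}$ coming from $L_{n+1}^pe_nL_{n+1}^p$ (cf. Lemma~\ref{basisofcommutatorquotient}). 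In either approach the bookkeeping is the laborious part; the passage from $\operatorname{rank}(F^i|_V)$ to the closed formula is pure floor-and-ceiling arithmetic.
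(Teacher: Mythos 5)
Your argument is correct, and it reaches the stated formula by a genuinely different organization than the paper's. The paper works directly with the set $\tilde T_{n,j}(i)$ of linear combinations $\sum b_k\epsilon^{2k+1}$ whose $2^i$-th power lies in $\langle(\epsilon^2+\epsilon^{2j+3})^m\rangle_K$: it first reduces to the prime field $\F_2$, then runs a two-sided argument (a recursive ``correction'' of error terms, governed by Lucas' theorem on binomial coefficients mod $2$, to bound the minimal non-vanishing index $k_0$ from below, followed by a converse construction showing every $\epsilon^{2k+1}$ with $k\ge k_0$ belongs to $\tilde T_{n,j}(i)$). You instead package the whole computation as the $i$-th iterate of the $2$-semilinear Frobenius $F$ on $A/[A,A]$, exhibit its matrix in echelon form with respect to the basis $\{\overline{\epsilon^{2\ell+1}}\}$ via the single-step identity $\overline{\epsilon^{2m}}=\sum_{k\ge1}\binom{m}{k}\overline{\epsilon^{2m+k(2j+1)}}$ (whose leading coefficient $\binom{m}{1}=m$ is odd for odd $m$, so the pivot sits at $\sigma(\ell)=2\ell+j+1$), and read off the kernel from the injectivity of $\sigma^i$. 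This buys several simplifications: the reduction to $\F_2$ becomes unnecessary, Lucas' theorem is not needed beyond the trivial case $k=1$ (the higher terms only need to be seen to have strictly larger index, which follows from $k\ge 3$ odd or from re-reducing even powers), and the lower bound and the converse collapse into a single rank computation. The underlying identity is of course the same as the paper's key expansion of $(\epsilon^2+\epsilon^{2j+3})^{2^{i-1}(2k_0+1)}$, so the two proofs are arithmetically equivalent; yours is shorter and, I think, easier to verify. Two small points to tighten in a write-up: well-definedness of $F$ on the quotient requires not just $(x+y)^2=x^2+y^2+[x,y]$ but also that $c\in[A,A]$ implies $c^2\in[A,A]$ (this is K\"ulshammer's inclusion $T_0(A)\subseteq T_1(A)$ quoted in the introduction); and the displayed identity for $\epsilon^{2m}$ is obtained by expanding $(a_0\ol a_0)^m=\epsilon^{2m}(1+\epsilon^{2j+1})^m$ and solving for $\epsilon^{2m}$, not by expanding the inverse $(1+\epsilon^{2j+1})^{-m}$ as your phrasing suggests. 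The final floor/ceiling manipulation, including the parity observation that lets you replace $2n-(2^{i+1}-2)(j+1)$ by the paper's numerator $2n-(2^{i+1}-2)j-(2^{i+1}-1)$, is correct.
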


\begin{proof}
Lemma~\ref{basisofcommutatorquotient} provided a set of coset generators of
the commutator space $[L_n^{X^{2j}},L_n^{X^{2j}}]$ in $L_n^{X^{2j}}$,
namely $\{e_0,\ldots,e_{n-1}\}\cup \{\epsilon^{(2k+1)}|\;0\le k\le n-1\}$.
For our purpose of determining the K\"ulshammer ideals we can discard the
trivial paths since they can never be involved in any element of
a K\"ulshammer ideal.
Therefore in order to compute $T_i(L_n^{X^{2j}})$ we need to see when a $2^i$-th
power of a
linear combination of elements $\{\epsilon^{(2k+1)}|\;0\le k\le n-1\}$ lies in
the commutator space $[L_n^{X^{2j}},L_n^{X^{2j}}]$.

%Since $K$ is of characteristic $2$ and
Note that $\epsilon\in e_0L_n^{X^{2j}}e_0$, and that by Proposition
\ref{explicitdescrofK} a basis of the intersection
$e_0L_n^{X^{2j}}e_0 \cap [L_n^{X^{2j}},L_n^{X^{2j}}]$
is given by the paths
$a_0a_1\ldots a_{\ell}\ol a_{\ell}\ol a_{\ell-1}\ldots \ol a_0$
where $0\le \ell\le n-2$. Moreover, we have that
$$a_0a_1\ldots a_{\ell}\ol a_{\ell}\ol a_{\ell-1}\ldots \ol a_0
= (a_0\ol a_0)^{\ell+1} =(\epsilon^2+\epsilon^{2j+3})^{\ell+1}
$$
(no signs occurring since we are in characteristic 2).
This means that in order to obtain the desired formula for
the dimension of $T_i(L_n^{X^{2j}})$
one needs to consider the $K$-vector space
$$
\tilde T_{n,j}(i):=\left\{\sum_{k=0}^{n-1}b_k\epsilon^{2k+1}
\left|\;\left(\sum_{k=0}^{n-1}b_k\epsilon^{2k+1}\right)^{2^i}
\in\left<(\epsilon^2+\epsilon^{2j+3})^m|\;1\le m\le n-1\right>_K\right.\right\}
$$
whose dimension is equal to the dimension of the factor space
$T_i(L_n^{X^{2j}})/[L_n^{X^{2j}},L_n^{X^{2j}}]$.

In order to determine this dimension we therefore have to express an element
$\left(\sum_{k=0}^{n-1}b_k\epsilon^{2k+1}\right)^{2^i}$
as a linear combination of the form
$\sum_{m=1}^{n-1}c_m(\epsilon^2+\epsilon^{2j+3})^m$
for $c_m\in K$.

\bigskip

\paragraph{\bf First Step}
%
%{\tt ... k\"onnte rausgelassen werden, falls von vorneherein K algebraisch
%abgeschlossen vorausgesetzt wuerde ...}
%\bigskip
We shall
reduce the problem to the case of $K$ being the prime field of characteristic $2$.

As is described in the remarks preceding the statement of the theorem we have to
give the dimension of the $K$-vector space
$$
\left\{\sum_{k=0}^{n-1}b_k\epsilon^{2k+1}\in K[\epsilon]\left|\;\left(\sum_{k=0}^{n-1}b_k\epsilon^{2k+1}\right)^{2^i}
\in\left<(\epsilon^2+\epsilon^{2j+3})^m|\;m\in\N\right>_K\right.\right\}
$$
Let $$U:=\left<(\epsilon^2+\epsilon^{2j+3})^m|\;m\in\N\right>_K\subseteq K[\epsilon]/\epsilon^{2n}.$$
Then let $$V:=\left<\epsilon^{2k+1}\right>_K\subseteq K[\epsilon]/\epsilon^{2n}$$
and let $\mu:K[\epsilon]/\epsilon^{2n}\lra K[\epsilon]/\epsilon^{2n}$ given by
$\mu(x):=x^2$. Then
$$
\left\{\sum_{k=0}^{n-1}b_k\epsilon^{2k+1}\in K[\epsilon]\left|\;\left(\sum_{k=0}^{n-1}b_k\epsilon^{2k+1}\right)^{2^i}
\in\left<(\epsilon^2+\epsilon^{2j+3})^m|\;m\in\N\right>_K\right.\right\}
=V\cap \left(\mu^i\right)^{-1}(U).
$$
Now, $U=U_0\otimes_{\F_2}K$ and $V:=V_0\otimes_{\F_2}K$ for
$U_0$ and $V_0$ being defined as $U$ and $V$, but with $\F_2$ as base field.
If $K$ is perfect, then
$$
V\cap \left(\mu^i\right)^{-1}(U)=(V_0\otimes_{\F_2}K)\cap \left(\mu^i\right)^{-1}(U_0\otimes_{\F_2}K)=\left(V_0\cap
\left(\mu^i\right)^{-1}(U_0)\right)\otimes_{\F_2}K
$$

Hence the dimension of the vector space
can be computed in $\F_2$. We hence may assume that $K=\F_2$.

Since $K$ is assumed to be the prime field, we get $b^2=b$ for all $b\in K$, and so
we need to find coefficients $c_m\in K$ so that
$$\left(\sum_{k=0}^{n-1}b_k\epsilon^{2^i(2k+1)}\right)=
\left(\sum_{k=0}^{n-1}b_k\epsilon^{(2k+1)}\right)^{2^i}=
\sum_{m=1}^{n-1}c_m\left(\epsilon^2+\epsilon^{2j+3}\right)^m.$$

\bigskip

\paragraph{\bf Second step}
In the course of the proof we shall need to know whether certain
binomial coefficients are even or odd. More precisely, write a natural
number as $2^a v$ with $v$ odd and $a\in\mathbb{N}\cup\{0\}$. Then we have that
$$\left\{ u\in\N\setminus\{0\}\;\left|\;{{2^a v}\choose u}
\mbox{~odd}\right.\right\}\subseteq 2^a\Z\
\mbox{~\hskip2.5cm~~and~~\hskip2.5cm~}
\min\left\{u\in\N\setminus\{0\}\;\left|\;{{2^av}\choose u}\mbox{~odd}\right.\right\}=2^a.$$
In fact, both statements follow easily from the following well-known result on
binomial coefficients, going back to Lucas \cite{Lucas}; for a
proof see e.g. \cite{Fine}:
{\em Let $p$ be a prime, and let natural numbers
$M=\sum M_ip^i$ and $N=\sum N_ip^i$ be given in their $p$-adic expansion.
Then
${M\choose N} \equiv \prod_i {M_i\choose N_i} \mod p.$
}

\medskip

We remark further that binomial coefficients are integers. Hence, seen in $K$
they actually belong to the prime field. If $K$ is of characteristic $2$, then
a binomial coefficient can only have values $0$ or $1$.

\bigskip

\paragraph{\bf Third step.}
We need to study for which $b_0,\ldots,b_{n-1}$ given, there
exist coefficients $c_m\in K$ so that
\begin{equation} \label{3rd-eqn1}
\sum_{k=0}^{n-1}b_k\epsilon^{2^{i}(2k+1)}=
\sum_{m=1}^{n-1}c_m(\epsilon^2+\epsilon^{2j+3})^m.
\end{equation}

We first determine a lower bound for the indices of the non-vanishing
coefficients $b_k$.
Denote by $k_0$ the smallest integer $k$ so that $b_k\neq 0$.
Then formula (\ref{3rd-eqn1}) reads
\begin{equation} \label{3rd-eqn2}
\sum_{k=k_0}^{n-1}b_k\epsilon^{2^i(2k+1)}=
\sum_{m=1}^{n-1}c_m(\epsilon^2+\epsilon^{2j+3})^m.
\end{equation}
Comparing the smallest powers of $\epsilon$ occurring on either side
of equation (\ref{3rd-eqn2}) we can deduce that
$c_{m}=0$ for $m<2^{i-1}(2k_0+1)$ and $c_{2^{i-1}(2k_0+1)}\neq 0$.
%Indeed, else a smaller power of $\epsilon$ would occur on the right hand side,
%which would be a contradiction to the minimality of $k_0$.
Hence equation (\ref{3rd-eqn2}) now reads
\begin{equation}\label{3rd-eqn3}
\sum_{k=k_0}^{n-1}b_k\epsilon^{2^i(2k+1)}=
\sum_{m=2^{i-1}(2k_0+1)}^{n-1}c_m(\epsilon^2+\epsilon^{2j+3})^m.
\end{equation}
Using the statements
on the parity of binomial coefficients from the second step
and the
fact that the base field is of characteristic $2$ we have that
\begin{eqnarray*}
(\epsilon^2+\epsilon^{2j+3})^{2^{i-1}(2k_0+1)}
&=&\epsilon^{2^i(2k_0+1)}+{{2^{i-1}(2k_0+1)}\choose 2^{i-1}}\cdot
\left(\epsilon^2\right)^{2^{i-1}(2k_0+1)-2^{i-1}}\cdot\left(\epsilon^{2j+3}\right)^{2^{i-1}}+\\
&&+\mbox{ higher powers of $\epsilon^{2^{i-1}}$}\\
&=&\epsilon^{2^i(2k_0+1)}+
\epsilon^{2^{i-1}(4k_0+2j+3)}+\mbox{ higher powers of $\epsilon^{2^{i-1}}$}.
\end{eqnarray*}
Hence as long as $2^{i-1}(4k_0+2j+3)< 2n$ (i.e. $\epsilon^{2^{i-1}(4k_0+2j+3)}$
does not vanish), a non-zero scalar multiple of
$\epsilon^{2^{i-1}(4k_0+2j+3)}$ occurs on the right hand
side of equation (\ref{3rd-eqn3}). However, it can not occur on the left hand side
of equation (\ref{3rd-eqn3}) since $2^{i-1}(4k_0+2j+3)$ is not divisible by $2^i$.
So $\epsilon^{2^{i-1}(4k_0+2j+3)}$ would also have to be a
term of some other summand in
$\sum_{m=2^{i-1}(2k_0+1)}^{n-1}c_m(\epsilon^2+\epsilon^{2j+3})^m$
(so that the terms can cancel out).

For $i=1$ this is impossible, since for $m>2^{i-1}(2k_0+1)$ the smallest
possible odd exponent in $(\epsilon^2+\epsilon^{2j+3})^m$
is already larger than $2^{i-1}(4k_0+2j+3)$.
%no odd powers of $\epsilon$ can occur.
Hence for $i=1$ we must have that
$4k_0+2j+3\geq 2n$ which implies that
$k_0=\max\left(\left\lceil\frac{2n-2j-3}{4}\right\rceil,0\right).$
Note that we indeed have to take the maximum with $0$ here since
the index $k_0$ is non-negative by definition.
\smallskip

Suppose now that $i\ge 2$. Then we claim that the only
possibility to cancel the above term $\epsilon^{2^{i-1}(4k_0+2j+3)}$
is to put
$$c_{2^{i-2}(4k_0+2 j+3)}= c_{2^{i-1}(2k_0+1)}\neq 0.$$
In fact, on the one hand we have that
$$(\epsilon^2+\epsilon^{2j+3})^{2^{i-2}(4k_0+2 j+3)} =
\epsilon^{2^{i-1}(4k_0+2 j+3)}+\mbox{ higher powers of $\epsilon^{2^{i-2}} $}$$
so that the desired term cancels; on the other hand, it could not cancel
for a smaller index $m$ since this would have to satisfy
$m\ge 2^{i-1}(2k_0+3)$ (note that the exponents on the left hand side
of equation (\ref{3rd-eqn1}) are divisible by $2^i$) and then by Lucas' theorem
above (cf. second step) the second term
in $(\epsilon^2+\epsilon^{2j+3})^m$ already has exponent
$$2^i(2k_0+2)+(2j+3)2^{i-1} = 2^{i-1}(4k_0+2j+7) > 2^{i-1}(4k_0+2j+3).$$

In a similar way, again using the second step
%the fact that binomial coefficients are integers
and that the base field is of characteristic $2$,
we further get
\begin{eqnarray*}
(\epsilon^2+\epsilon^{2j+3})^{2^{i-2}(4k_0+2j+3)}&=&\epsilon^{2^{i-1}(4k_0+2j+3)}+
\left(\epsilon^2\right)^{2^{i-2}(4k_0+2j+3)-2^{i-2}}\cdot\left(\epsilon^{2j+3}\right)^{2^{i-2}}+\\
&&+\mbox{ higher powers of $\epsilon^{2^{i-2}}$}\\
&=&\epsilon^{2^{i-1}(4k_0+2j+3)}+\epsilon^{2^{i-2}(8k_0+6j+7)}+\mbox{ higher powers of $\epsilon^{2^{i-2}}$}.
\end{eqnarray*}
Completely analogous to the case $i=1$ above we can deduce
that for $i=2$ we have
$8k_0+6j+7\geq 2n$ and therefore
$k_0=\max\left(\left\lceil\frac{2n-6j-7}{8}\right\rceil,0\right)$
in the case $i=2$. This is the second correction step.

\smallskip

We shall show by induction on $s$, that the lowest power of $\epsilon$ appearing in the
sum on the right hand side of equation (\ref{3rd-eqn3})
after $s$ corrections is
$$\epsilon^{2^{i-s}(2^sk_0+(2^s-2)\cdot j+(2^s-1))}.$$

The cases $s\in\{1,2\}$ have been treated above. Suppose the formula is shown
for some $s<i$. Then we shall show the formula for $s+1$:
We shall need to correct with $c_{2^{i-s-1}(2^sk_0+(2^s-2)j+(2^s-1))}\neq 0$ and get
higher error terms as follows:
\begin{eqnarray*}
(\epsilon^2+\epsilon^{2j+3})^{2^{i-s-1}(2^sk_0+(2^s-2)j+(2^s-1))}&=&
\epsilon^{2^{i-s}(2^sk_0+(2^s-2)j+(2^s-1))}+\\
&&+\left(\epsilon^2\right)^{2^{i-s-1}(2^sk_0+(2^s-2)j+(2^s-1))-2^{i-s-1}}
\cdot\left(\epsilon^{2j+3}\right)^{2^{i-s-1}}+\\
&&+\mbox{ higher powers of $\epsilon^{2^{i-s-1}}$}\\
&=&\epsilon^{2^{i-s}(2^sk_0+(2^s-2)j+(2^s-1))}+\\
&&+\epsilon^{2^{i-s-1}(2^{s+1}k_0+2\cdot(2^s-2)j+2\cdot (2^s-1-1))+2^{i-s-1}(2j+3)}+\\
&&+\mbox{ higher powers of $\epsilon^{2^{i-s-1}}$}\\
&=&\epsilon^{2^{i-s}(2^sk_0+(2^s-2)j+(2^s-1))}+\\
&&+\epsilon^{2^{i-s-1}(2^{s+1}k_0+(2^{s+1}-4+2)j+ (2^{s+1}-4+3))}+\\
&&+\mbox{ higher powers of $\epsilon^{2^{i-s-1}}$}\\
&=&\epsilon^{2^{i-s}(2^sk_0+(2^s-2)j+(2^s-1))}+\\
&&+\epsilon^{2^{i-s-1}(2^{s+1}k_0+(2^{s+1}-2)j+ (2^{s+1}-1))}+\\
&&+\mbox{ higher powers of $\epsilon^{2^{i-s-1}}$}\\
\end{eqnarray*}
which shows the formula for $s+1$.

Hence, we may correct the error terms by successively choosing
appropriate $c_m$ for higher and higher $m$, as long as $s<i$. If $s=i$
then the error term cannot be annihilated, and therefore it must be $0$.
Therefore  $$2^{i+1}k_0+(2^{i+1}-2)\cdot j+(2^{i+1}-1)\geq 2n$$
which means $$k_0\geq \frac{2n-(2^{i+1}-2)j-(2^{i+1}-1)}{2^{i+1}}$$ and therefore
$$k_0=\max\left(\left\lceil\frac{2n-(2^{i+1}-2)j-(2^{i+1}-1)}{2^{i+1}}
\right\rceil,\;\;0\right).$$
\smallskip

\noindent
{\bf Fourth step.}
Suppose
$$k\geq k_0=\max\left(\left\lceil\frac{2n-(2^{i+1}-2)j-(2^{i+1}-1)}{2^{i+1}}\right\rceil\right).$$
We shall prove that then
$$\epsilon^{2^i(2k+1)}\in\left<(\epsilon^2+\epsilon^{2j+3})^m|\;m\in\N\right>_K.$$
To this end we put $c_{2^{i-1}(2k+1)}=1$
and get by the second step that
\begin{eqnarray*}
\left(\epsilon^2+\epsilon^{2j+3}\right)^{2^{i-1}(2k+1)}-\epsilon^{2^i(2k+1)}&=&
\left(\epsilon^2\right)^{2^{i-1}(2k+1)-2^{i-1}}
\cdot\left(\epsilon^{2j+3}\right)^{2^{i-1}}+
\mbox{ higher order powers of $\epsilon^{2^{i-1}}$}\\
&=&\epsilon^{2^{i-1}(4k+2j+3)}+\mbox{ higher order powers of $\epsilon^{2^{i-1}}$}
\end{eqnarray*}

Hence, we can choose coefficients $c_{2^{i-1}m}$ for certain $m$
so that
$$\epsilon^{2^i(2k+1)}-\sum c_{2^{i-1}m}(\epsilon^2+\epsilon^{2j+3})^{2^{i-1}m}$$
is a direct sum of terms $\epsilon^{2^{i-1}\ell} $ where $\ell\geq 4k+2j+3$.

If $i=1$, we are done since then $\epsilon^{2^{i-1}\ell} =0$ since $\ell$ was
chosen in a way that
$$\ell\geq 4k+2j+3\geq 4k_0+2j+3\geq 2n.$$

If $i\geq 2$, put $c_{2^{i-2}\ell}=1$
for all terms $\epsilon^{2^{i-1}\ell}$ of the powers of $\epsilon^{2^{i-1}}$
occurring in the above difference
$$\epsilon^{2^i(2k+1)}-\sum c_{2^{i-1}m}(\epsilon^2+\epsilon^{2j+3})^{2^{i-1}m}.$$
We know that each of these $\ell$ satisfies $\ell\geq 4k+2j+3$, so that
all the coefficients  $2^{i-2}\ell$ are bigger than $2^{i-2}(4k_0+2j+3)$.

We compute
\begin{eqnarray*}
\left(\epsilon^2+\epsilon^{2j+3}\right)^{2^{i-2}\ell}&=&
\epsilon^{2^{i-1}\ell}+{{2^{i-2}\ell}\choose {2^{i-2}}}
\left(\epsilon^2\right)^{2^{i-2}\ell-2^{i-2}}
\cdot\left(\epsilon^{2j+3}\right)^{2^{i-2}}+\\
&&+\mbox{ higher order powers of $\epsilon^{2^{i-2}}$}\\
&=&\epsilon^{2^{i-1}\ell}+
{{2^{i-2}\ell}\choose {2^{i-2}}}\epsilon^{2^{i-2}(2\ell+2j+1)}+
\mbox{ higher order powers of $\epsilon^{2^{i-2}}$}
\end{eqnarray*}
Again, if $i=2$ we are done since $$2\ell+2j+1\geq 2(4k_0+2j+3)+2j+1=8k_0+6j+7\geq 2n$$
and hence
$\epsilon^{2^{i-2}(2\ell+2j+1)}=0$ for all $\ell$
which may occur by definition of $k_0$.

We use induction on $s$ on the statement that we may choose
$c_{2^{i-s}m}$ so that only powers $\epsilon^{2^{i-s}\ell}$ occur
in the difference
$$\epsilon^{2^i(2k+1)}-\sum_{m=1}^{n-1}c_{2^{i-s}m}(\epsilon^2+\epsilon^{2j+3})^{2^{i-s}m}$$
with $\ell\geq (2^sk_0+(2^s-2)\cdot j+(2^s-1)).$

The statement is true for $s=1$ and $s=2$ by the above discussion.
Suppose it is true for $s\leq i$. We shall prove it for $s+1$.

For every term $\epsilon^{2^{i-s}\ell}$ which occurs as a summand in
$$\epsilon^{2^i(2k+1)}-\sum_{m=1}^{n-1}c_{2^{i-s}m}(\epsilon^2+\epsilon^{2j+3})^{2^{i-s}m}$$
we put $c_{2^{i-s-1}\ell}=1$ and then
we compute
\begin{eqnarray*}
\left(\epsilon^2+\epsilon^{2j+3}\right)^{2^{i-s-1}\ell}&=&
\epsilon^{2^{i-s}\ell}+{{2^{i-s-1}\ell}\choose {2^{i-s-1}}}
\left(\epsilon^2\right)^{2^{i-s-1}\ell-2^{i-s-1}}
\cdot\left(\epsilon^{2j+3}\right)^{2^{i-s-1}}+\\
&&+\mbox{ higher order powers of $\epsilon^{2^{i-s-1}}$}\\
&=&\epsilon^{2^{i-s}\ell}+
{{2^{i-s-1}\ell}\choose {2^{i-s-1}}}\epsilon^{2^{i-s-1}(2\ell+2j+1)}+
\mbox{ higher order powers of $\epsilon^{2^{i-2}}$}
\end{eqnarray*}

Now, by induction hypothesis $\ell\geq (2^sk_0+(2^s-2)\cdot j+(2^s-1)).$
Hence
$$2\ell+2j+1\geq 2\cdot (2^sk_0+(2^s-2)\cdot j+(2^s-1))+2j+1=
2^{s+1}k_0+(2^{s+1}-2)j+(2^{s+1}-1)$$
which is the statement for $s+1$.

But now, finally for $s=i$ we get that the error terms are $\epsilon^t$ where
$$t\geq 2^{i+1}k_0+(2^{i+1}-2)j+(2^{i+1}-1)\geq 2n$$ by definition of $k_0$
and hence the error terms are $0$.

Therefore,
$$
\epsilon^{2^i(2k+1)}\in\left\{\sum_{k=0}^{n-1}b_k\epsilon^{2k+1}\in K[\epsilon]\left|\;\left(\sum_{k=0}^{n-1}b_k\epsilon^{2k+1}\right)^{2^i}
\in\left<(\epsilon^2+\epsilon^{2j+3})^m|\;m\in\N\right>_K\right.\right\}\;\;
\mbox{ for all }k\geq k_0.
$$

\bigskip

\paragraph{\bf Fifth step}

Now we are able to compute the dimension of
$$\tilde T_{n,j}(i)=\left\{\sum_{k=0}^{n-1}b_k\epsilon^{2k+1}\in K[\epsilon]\left|\;\;\left(\sum_{k=0}^{n-1}b_k\epsilon^{2k+1}\right)^{2^i}
\in\left<(\epsilon^2+\epsilon^{2j+3})^m|\;m\in\N\right>_K\right.\right\}
$$
We know by the third and fourth step that
$$\tilde T_{n,j}(i)=\left\{\sum_{k=0}^{n-1}b_k\epsilon^{2k+1}\in K[\epsilon]\left|\;
b_k=0\mbox{ for }k<\left\lceil\frac{2n-(2^{i+1}-2)j-(2^{i+1}-1)}{2^{i+1}}\right\rceil
\right.\right\}$$
The dimension of this space is therefore
$$
\dim \left(T_i(L_n^{X^{2j}})/[L_n^{X^{2j}},L_n^{X^{2j}}]\right)=
\dim\tilde T_{n,j}(i)=n-\left\lceil\frac{2n-(2^{i+1}-2)j-(2^{i+1}-1)}{2^{i+1}}\right\rceil
$$

\bigskip

This finishes the proof.
\end{proof}

%%%%%%%%%%%%%%%%%%%%%%%%%%%%%%%%%%%%%%%%%%%%%%%%%%%%%%%%%%%%%%%%%%%%%%%%
\subsection{Consequences for derived equivalence and
stable equivalence of Morita type}
In this section we shall address the main motivational question for
this paper, namely when deformed preprojective algebras of type $L$
are derived equivalent or stably equivalent of Morita type.
As main application of our results on K\"ulshammer spaces we
can obtain partial answers to these problems.

For both notions of equivalence it is in general a difficult question
to decide whether two algebras are equivalent or not.

According to \cite{Bialkowski-ICRA-Abstract}, 
for the deformed preprojective algebras of type
$L_n$, Bia\l kowski, Erdmann and Skowro\'nski
are going to show in \cite{BESIII} that a for an algebraically closed field $K$
the set of algebras
$\{L_n^{X^{2j}}\,|\,0\le j\le n-1\}$
gives a complete set of representatives for the Morita equivalence
classes.

As an application of our result on K\"ulshammer spaces we can now distinguish
several of these algebras up to derived equivalence, and up to stable
equivalence of Morita type.

\begin{Theorem} \label{thm-derived}
Let $K$ be a perfect field of characteristic 2.
\begin{enumerate}
\item[{(a)}]
If two deformed preprojective
algebras $L_n^{p}$ and $L_m^{q}$ are stably equivalent of Morita type
or derived equivalent,
then $n=m$.
\item[{(b)}]
For $n\in\mathbb{N}$ let $j,k\in\{0,1\ldots,n-1\}$ be different numbers
such that
$\{j,k\}\neq \{n-2r,n-2r-1\}$ for every $1\le r\le \lceil \frac{n-2}{2}\rceil$.
Then the deformed preprojective algebras
$L_n^{X^{2j}}$ and $L_n^{X^{2k}}$
are not stably equivalent of Morita type, and also not
derived equivalent.
\end{enumerate}
\end{Theorem}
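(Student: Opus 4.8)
The plan is to exploit, for both notions of equivalence, one and the same family of numerical invariants. For a symmetric $K$-algebra $A$ over a perfect field the codimensions $\dim_K T_i(A)-\dim_K[A,A]$, $i\ge 0$, are invariant under derived equivalence by \cite{Z2}, and under stable equivalence of Morita type by \cite{LZZ} (the latter in fact for arbitrary finite-dimensional algebras). Since every $L_n^p$ is symmetric by Theorem~\ref{thm-symmetric}, it therefore suffices to compute, or at least to compare, these codimensions.

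For part~(a) I would first show that for every deformation polynomial $p$ the value $\dim_K T_i(L_n^p)-\dim_K[L_n^p,L_n^p]$ equals $n$ once $i$ is large. On the one hand, $[L_n^p,L_n^p]\subseteq\rad(L_n^p)$, and since $L_n^p/\rad(L_n^p)\cong K^n$ every element $x$ with $x^{2^i}\in[L_n^p,L_n^p]$ already lies in $\rad(L_n^p)$; hence $T_i(L_n^p)/[L_n^p,L_n^p]$ is contained in $\rad(L_n^p)/[L_n^p,L_n^p]$, which by Proposition~\ref{explicitdescrofK}(c) is spanned by the cosets of $\epsilon,\epsilon^3,\dots,\epsilon^{2n-1}$ and so is $n$-dimensional. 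On the other hand, $(\epsilon^{2k+1})^{2^i}=\epsilon^{2^i(2k+1)}=0$ as soon as $2^i\ge 2n$, so for such $i$ all of $\epsilon,\epsilon^3,\dots,\epsilon^{2n-1}$ lie in $T_i(L_n^p)$, giving equality. Consequently, if $L_n^p$ and $L_m^q$ are derived equivalent or stably equivalent of Morita type, then for $i$ large enough the invariant of \cite{Z2}, \cite{LZZ} equals $n$ for the first algebra and $m$ for the second, whence $n=m$.

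For part~(b) it suffices to evaluate Theorem~\ref{mainresult} at $i=1$. With $2^{i+1}=4$ and the identity $\lceil\frac{2(n-j)-3}{4}\rceil=\lceil\frac{n-j-1}{2}\rceil$ (which for $0\le j\le n-1$ is non-negative, so the maximum with $0$ is irrelevant) one obtains
$$\dim_K T_1(L_n^{X^{2j}})-\dim_K[L_n^{X^{2j}},L_n^{X^{2j}}]=n-\left\lceil\frac{n-j-1}{2}\right\rceil.$$
If $L_n^{X^{2j}}$ and $L_n^{X^{2k}}$ with $j\ne k$ were derived equivalent or stably equivalent of Morita type, then by the invariance recalled above $\lceil\frac{n-j-1}{2}\rceil=\lceil\frac{n-k-1}{2}\rceil$. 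Assuming $j<k$, an elementary inspection of the ceiling function shows that this forces $k=j+1$ together with $n-j$ odd; writing $n-j=2r+1$ (here $r\ge 1$, since $k\le n-1$ rules out $n-j=1$, and $r\le\lceil\frac{n-2}{2}\rceil$ since $j\ge 0$) this says precisely $\{j,k\}=\{n-2r-1,n-2r\}$, contradicting the hypothesis. Hence $L_n^{X^{2j}}$ and $L_n^{X^{2k}}$ are neither derived equivalent nor stably equivalent of Morita type.

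The computations involved are light — essentially arithmetic with ceilings — so the only points needing genuine care are the following. First, one must make sure the chosen invariant is legitimate simultaneously on the derived and the stable side; this is why symmetricity of $L_n^p$ (Theorem~\ref{thm-symmetric}) is invoked, so that \cite{Z2} applies. Second, for part~(a) the fact that Theorem~\ref{mainresult} only covers the polynomials $X^{2j}$ means that the stable value of the K\"ulshammer codimension for a general $p$ has to be read off directly from Proposition~\ref{explicitdescrofK} as indicated above, rather than from the closed formula.
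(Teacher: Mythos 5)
Your proposal is correct. Part~(b) is essentially the paper's own argument: evaluate Theorem~\ref{mainresult} at $i=1$, simplify $\left\lceil\frac{2n-2j-3}{4}\right\rceil=\left\lceil\frac{n-j-1}{2}\right\rceil$ using that the numerator is odd, and observe that two indices give the same value exactly for the excluded pairs $\{n-2r,n-2r-1\}$; the only cosmetic difference is that for derived equivalence you invoke the derived invariance of the K\"ulshammer codimensions from \cite{Z2} directly, whereas the paper passes through Rickard and Keller--Vossieck to reduce derived equivalence of selfinjective algebras to stable equivalence of Morita type. Part~(a), however, is a genuinely different route. The paper uses two classical invariants: the number of simple modules for derived equivalence, and Xi's result that $|\det C|$ is preserved under stable equivalence of Morita type, combined with $\det C_n=2^n$ from Remark~\ref{Rem-Cartan-dim}. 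You instead stay entirely inside the K\"ulshammer framework: since $[A,A]\subseteq\rad(A)$ and $A/\rad(A)\cong K^n$ is reduced, $T_i(A)\subseteq\rad(A)$, so $\dim_KT_i-\dim_K[A,A]\le n$ by Proposition~\ref{explicitdescrofK}(c), with equality once $2^i\ge 2n$ because then $\epsilon^{2^i(2k+1)}=0$; the stabilized value of the invariant is therefore $n$ for \emph{every} deformation polynomial $p$, which recovers $n=m$. This is a clean and correct alternative; what it buys is a uniform argument needing only one family of invariants (and no appeal to Xi), at the cost of being slightly less elementary than counting simples and computing a determinant. Both arguments are complete.
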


\begin{proof}
(a) It is well-known that the number of simple modules is a derived invariant.

Moreover, by a result of C. Xi \cite[Proposition 5.1]{Xi},
the absolute value of the determinant of the Cartan matrix of an algebra
is invariant under stable equivalence of Morita type. For the deformed
preprojective algebras $L_n^p$ the Cartan determinant is $2^n$,
see Remark~\ref{Rem-Cartan-dim}, so the result follows.
\smallskip

(b) We use the first K\"ulshammer space or more precisely
the following difference occurring in
Theorem \ref{mainresult} for the case $i=1$,
\begin{eqnarray} \label{eqn-T1}
\dim_K\left(T_1(L_n^{X^{2j}})\right)-\dim_K\left([L_n^{X^{2j}},L_n^{X^{2j}}]
\right) &=&
n-\max\left(\left\lceil\frac{2n-2j-3}{4}
\right\rceil,0\right).
\end{eqnarray}
By a result of Liu, Zhou and the
second author \cite[Corollary 7.5]{LZZ} this number is invariant
under stable equivalences of Morita type.
Since the numerator $2n-2j-3$
is congruent to 1 or 3 modulo 4 we have
$$\left\lceil \frac{2n-2j-3}{4}\right\rceil = \left\lceil \frac{2n-2j-2}{4}
\right \rceil
= \left\lceil \frac{n-j-1}{2}\right\rceil.
$$
Note that for all the values $j\in\{0,\ldots,n-1\}$ this number is
non-negative, so that equation (\ref{eqn-T1}) reads
\begin{eqnarray*}
\dim_K\left(T_1(L_n^{X^{2j}})\right)-\dim_K\left([L_n^{X^{2j}},L_n^{X^{2j}}]
\right) &=&
n-\left\lceil\frac{n-j-1}{2}
\right\rceil.
\end{eqnarray*}
For fixed $n$, this invariant becomes equal for two different
values $j,k\in\{0,\ldots,n-1\}$
precisely when $\{j,k\}=\{n-2r,n-2r-1\}$ for some
$1\le r\le \lceil \frac{n-2}{2}\rceil$. This proves the assertion on stable
equivalence of Morita type.

The statement on derived equivalence follows immediately by
using a result by Rickard \cite{Rickard} and Keller and Vossieck
\cite{KV}
saying that for selfinjective algebras (recall that our algebras $L_n^p$
are even symmetric by Theorem \ref{thm-symmetric}) any derived equivalence induces
a stable equivalence of Morita type.
\end{proof}

\begin{Remark}
{\em
(1) In the above theorem we have for simplicity only exploited the first K\"ulshammer
space, but of course one could also use higher K\"ulshammer spaces for
distinguishing algebras up to derived equivalence, or up stable equivalence
of Morita type. For explicit examples of deformed preprojective algebras
of type $L$ see Example \ref{example-Ln} below.
\smallskip

(2) Note that part (b) of the above theorem in particular applies
whenever $|j-k|\geq 2$.
\smallskip

(3)
For any $n\in\mathbb{N}$ the (undeformed) preprojective algebra
$L_n^{X^{2(n-1)}}$ is not stably equivalent of Morita type, and also not derived
equivalent, to any of the
algebras $L_n^{X^{2j}}$ for $j\in\{0,\ldots,n-2\}$. In fact,
by the preceding remark it suffices to distinguish the algebras
$L_n^{X^{2(n-1)}}$ and $L_n^{X^{2(n-2)}}$; but $j=n-1$ and $k=n-2$
are not of the form
$\{n-2r,n-2r-1\}$ for some $1\le r\le \lceil \frac{n-2}{2}\rceil$.
}
\end{Remark}

\begin{Example} \label{example-Ln}
{\em
\begin{enumerate}
\item[{(1)}] {\bf The case $n=2$ revisited.} Up to Morita equivalence there are
two deformed preprojective algebras of type $L_2$, namely $L_2^{X^0}$ and
$L_2^{X^2}$. They are not stably equivalent of Morita type
(and hence not derived equivalent) by Theorem \ref{thm-derived}\,(5).
So we have a complete classification
of deformed preprojective algebras of type $L_2$,
up to stable equivalence of Morita type (and up to derived equivalence)
\item[{(2)}] {\bf The case $n=3$ revisited.} There are three deformed preprojective
algebras of type $L_3$, namely $L_2^{X^0}$, $L_2^{X^2}$ and
$L_2^{X^4}$. The algebra $L_2^{X^4}$ is not stably equivalent
of Morita type (and hence not derived equivalent)
to the other two algebras.

But with the K\"ulshammer spaces we can not distinguish the algebras
$L_2^{X^0}$ and $L_2^{X^2}$. We don't know whether these are stably
equivalent of Morita type (or derived equivalent), or not.
\item[{(3)}] {\bf The case $n=5$.} For the five algebras (up to Morita equivalence)
$L_2^{X^{2j}}$ where $j\in\{0,1,2,3,4\}$ we get the following numbers for
the differences
\begin{eqnarray*}
\dim_K\left(T_i(L_5^{X^{2j}})\right)-\dim_K\left([L_5^{X^{2j}},L_5^{X^{2j}}]
\right) &=&
5-\max\left(\left\lceil\frac{10-(2^{i+1}-2)j-(2^{i+1}-1)}{2^{i+1}}
\right\rceil,0\right)
\end{eqnarray*}
which are invariants under derived equivalence and under stable equivalence
of Morita type.
\begin{center}
\begin{tabular}{|c||c|c|c|c|c|}
\hline
$i\setminus j$ & 0 & 1 & 2 & 3 & 4 \\
\hline\hline
1 & 3 & 3 & 4 & 4 & 5 \\
\hline
2 & 4 & 5 & 5 & 5 & 5 \\
\hline
$\ge 3$ & 5 & 5 & 5 & 5 & 5 \\
\hline
\end{tabular}
\end{center}
Therefore the algebras $L_5^{X^{0}}$, $L_5^{X^{2}}$, $L_5^{X^{4}}$ and
$L_5^{X^{8}}$ are pairwise not stably equivalent of Morita type (and hence
pairwise not derived equivalent). Note that
$L_5^{X^{0}}$ and $L_5^{X^{2}}$ can only be distinguished by the second
K\"ulshammer spaces.

It remains open
whether $L_5^{X^{4}}$ and $L_5^{X^{6}}$ are stably equivalent of Morita
type (or derived equivalent),
or not.
\end{enumerate}
}
\end{Example}

\end{document}